\theoremstyle{plain}
\newtheorem{theorem}{Theorem}
\newtheorem{lemma}{Lemma}
\theoremstyle{remark}
\newtheorem{remark}{Remark}
\title{Efficient time-domain scattering synthesis via\\ frequency-domain singularity subtraction}
\author{Oscar P. Bruno\footnote{Computing and Mathematical Sciences, California Institute of Technology, Pasadena, CA, 91125 USA, obruno@caltech.edu, msantana@caltech.edu)}
  \and Manuel A. Santana\footnotemark[1]
}
\def \C{\mathbb{C}}
\def \R{\mathbb{R}}
\def \br{\mathbf{r}}
\def \bp{\mathbf{p}}
\def \inv{^{-1}}
\renewcommand{\Re}{\operatorname{Re}}
\renewcommand{\Im}{\operatorname{Im}}
\DeclareMathOperator*{\argmax}{argmax}
\DeclareMathOperator*{\argmin}{argmin}
\date{}
\begin{document}

\maketitle

\begin{abstract}
  Fourier transform-based methods enable accurate, dispersion-free
  simulations of time-domain scattering problems by evaluating
  solutions to the Helmholtz equation at a discrete set of frequencies
  sufficient to approximate the inverse Fourier transform. However, in
  the case of scattering by trapping obstacles, the Helmholtz solution
  exhibits nearly-real complex resonances---which significantly slows
  the convergence of numerical inverse transform. To address this
  difficulty this paper introduces a frequency-domain singularity
  subtraction technique that regularizes the integrand of the inverse
  transform and efficiently computes the singularity contribution via
  a combination of a straightforward and inexpensive numerical
  technique together with a large-time asymptotic
  expansion. Crucially, all {\em relevant complex resonances} and
  their residues are determined via rational approximation of integral
  equation solutions at {\em real frequencies}. An adaptive algorithm
  is employed to ensure that all relevant complex resonances are
  properly identified.
\end{abstract}

\section{Introduction\label{intro}}
Recently developed ``Frequency-Time Hybrid'' (FTH)
Fourier-transform-based methods~\cite{abl,ablII} offer accurate and
efficient numerical techniques for solving exterior time-domain wave
scattering problems; corresponding interior problems can be tackled by
such approaches as well~\cite{bruno2024multiple,pan2025multi}.  These
algorithms solve the associated Helmholtz problems at a discrete set
of frequencies---typically using layer potential
formulations---combined with specialized techniques such as
``windowing-and-recentering'' and high-frequency integration, enabling
effective reconstruction of the time-domain solution via inverse
Fourier transformation.  This approach offers several advantages: it
produces essentially dispersion-free solutions, it enables
straightforward parallelization in both space and time, and it can
efficiently accommodate incident fields that persist
indefinitely. However, the performance of these methods is severely
impacted by the presence of trapping geometries. In such cases, the
Helmholtz solution exhibits nearly real complex resonances (poles),
which cause extremely slow convergence in the numerical evaluation of
the inverse Fourier transform. This work addresses that challenge
through a frequency-domain singularity subtraction technique that
regularizes the integrand and naturally transitions to a large-time
asymptotic expansion expressed in terms of the complex
resonances. These resonances and their residues are evaluated
efficiently using only real-frequency data, by employing a novel
Incidence Excitation (IE) adaptive algorithm that relies on rational
approximation in the frequency variable. The resulting overall FTH-SS
(Singularity-Subtraction FTH) approach enables accurate and efficient
time-domain scattering simulations, for arbitrarily long time, even
for highly-trapping scattering structures.\looseness = -1

Other methods, such as the Fourier transform
methods~\cite{douglas1993frequency,mecocci2000new} and the well known
Convolution Quadrature (CQ)
method~\cite{lubich1994multistep,banjai2010multistep,betcke2017overresolving}
have been presented that, like the FTH methods, rely on transformation
into the frequency domain.  A detailed discussion of certain
advantages offered by the FTH algorithms vis-\`a-vis other
frequency-time approaches (relating, in the case of the CQ method, to
time dispersion and existence of an infinite tail, and concerning
increasing cost per time-step as time grows in previous Fourier
transform methods), can be found in~\cite{abl} and will therefore not
otherwise be discussed here.

The difficulties faced by the FTH method in trapping configurations
can be traced to the frequency-dependent behavior of the scattering
solutions. Indeed, for obstacles which are strongly trapping (see
e.g.~\cite{anderson2020bootstrap,lafontaine2021most} for more precise
definitions of the concept of trapping scatterer), the frequency
domain scattering solutions typically exhibit nearly real poles as
functions of frequency~\cite{beale1973scattering}. These complex
resonances (all of which are located in the lower
half-plane~\cite{taylorbook}) manifest themselves as sharp, spike-like
features along the real frequency axis
\cite{betcke2011condition,pradovera2025surrogate,lafontaine2021most,darbas2013combining},
whose resolution requires a dense sampling of frequency
points---
thereby making the numerical inversion of the Fourier
transform prohibitively expensive.

An additional difficulty associated with the evaluation of time domain
scattering by trapping obstacles relates to the slow temporal decay of
the scattered field in trapping regions. This phenomenon has been the
focus of extensive analytical work, which connects the decay rate to
the geometric properties of the
scatterer~\cite{lax1969decaying,lax2005exponential,ikawa1982decay,morawetz1961decay,morawetz1966exponential,tang2000resonance,anderson2020bootstrap}. The
slow decay associated with trapping poses challenges for traditional
time-domain methods such as finite difference and finite element based
techniques, which require fine spatial and temporal resolution to
control dispersion errors over long simulation times. For long time
evaluation of the scattered field a popular alternative is to express
the scattered field as an asymptotic ``singularity
expansion''~\cite{baum2005singularity} in terms of the complex
resonances. Such expansions can be formally derived on the basis of
evaluation of the inverse Fourier-transform via contour deformation,
where for late enough times the contribution from the poles
dominate. Much work has gone into proving the validity of the
aforementioned singularity
expansions~\cite{tang2000resonance,vainberg1989asymptotic,lax1969decaying},
although a rigorous justification for their validity in the trapping
case has remained elusive. Nevertheless, singularity expansions have been
widely used in practice to model late-time scattering
phenomena~~\cite{baum2005singularity,wilks2024generalized,meylan2012complex,hazard2007singularity,heyman1985wavefront,meylan2014singularity,
  lalanne2018light}, although not without criticism of the formalism
sometimes used~\cite{heyman1985wavefront,dolph1980relationship,
  ramm1982mathematical}. The numerical examples presented in this
paper---including problems involving scattering by highly-trapping
obstacles---provide strong evidence for the validity and high accuracy
of the singularity expansion in the asymptotic regime, regardless of
the trapping character of the scatterer.

As mentioned above, this contribution proposes a method for evaluation
of the slowly decaying fields scattered by trapping obstacles, which
relies on a certain frequency-domain singularity subtraction
methodology. Subtraction of complex resonances and their residues near
the real axis regularizes the integrand of the inverse
Fourier-transform leading to greatly increased convergence of
numerical quadrature rules; the resonance contributions can then be
easily evaluated and re-incorporated to obtain the correct time-domain
solution. For efficiency, the contributions from the subtracted
complex resonances are computed for large times using an asymptotic
numerical algorithm based on expressions that resemble the singularity
expansion mentioned above (Section~\ref{sec:I2eval}). Importantly, the
singularity subtraction method does not require the validity of the
singularity expansion; in any case, numerical experiments provided in
this paper suggest that the singularity expansion is asymptotically
valid independently of the trapping character of the scattering
obstacles considered; see Section~\ref{sec:SEM}.

As noted in Remark~\ref{rmk:IEmethod}, the efficiency of the proposed
method relies critically on a certain ``Incident-Excitation'' (IE)
algorithm. In a modified form of the AAA rational-approximation
approach for resonance evaluation~\cite{bst}, the IE algorithm
identifies the {\em complex} resonances {\em excited by a given
  incident field} as poles of rational approximants to the
corresponding integral-equation densities, evaluated at {\em real}
frequency values. Crucially, the rational approximants used in the IE
method also enable the computation of field values and residues
without requiring additional (and costly) boundary-integral inversions
beyond those already required by the IE algorithm. This stands in
clear contrast with previous
methods~\cite{Beyn2012eigen,misawa2017boundary,asakura2009numerical,alves2024wave,el2020rational,guttel2017nonlinear,bst}
for evaluating complex resonances, which, in the present setting,
necessitate the evaluation of integral resolvents over significantly
larger sets of real and complex frequencies.

Soon after a preliminary version~\cite{bruno2025efficient} of the
present work was made available, an alternative strategy was proposed
in the pre-publication~\cite{wilber2025time} that also addresses the
challenges posed by the presence of complex resonances near the real
frequency axis.  In that work, the inverse Fourier transform is
evaluated via contour deformation into a rectangular contour contained
in the {\em upper-half frequency plane}---wherein no scattering poles
exist, and where the frequency-domain scattering solution is an
analytic function. The use of the new contour results in a more
regular integrand along the horizontal segment (parallel to the real
frequency axis), on account of the larger distance from that contour
portion to the polar singularities---which, as indicated above, are
located in the {\em lower half-plane}. However, a number of challenges
arise from the use of such a strategy. Section~4.2
in~\cite{wilber2025time} identifies one such difficulty, namely, the
numerical overflow caused by certain exponentially large functions of
time that emerge during the contour integration process. To address
this, the authors reduce the distance from the horizontal segment to
the real axis as time increases---a procedure that ultimately
undermines the main objective of the strategy. Further, we suggest
that a potentially more fundamental difficulty lies in the fact that
exponentially large time-dependent functions are to be
multiplicatively canceled by correspondingly exponentially small
terms that arise as the result of highly oscillatory integration with
respect to frequency. Indeed, given the extremely large upper bound of
approximately $10^{308}$ representable in IEEE double-precision
arithmetic, it is clear that cancellation errors—arising from the
multiplication of large exponentials by the results of high-frequency
integration—will occur well before reaching the overflow limit.  The
resulting cancellation errors inevitably lead to a complete loss of
accuracy as time increases---and ultimately to exponential growth in
the numerical solution, while the true physical solution remains
bounded. In contrast, the method proposed in this paper remains
accurate for arbitrarily long times. It naturally yields a long-time
asymptotic expansion—obtained via a certain {\em lower half-plane}
contour integration approach that delivers large times solution values
at negligible cost.

This paper is structured as follows. Section~\ref{sec:prelims}
introduces the time-domain scattering problem, and provides an
overview of the FTH method. The relevant frequency-domain integral
equations and their connection to complex resonances is also discussed
in that section. Section~\ref{sec:AAA_and_res_alg} presents the AAA
algorithm for rational approximation~\cite{AAA} and related variants,
leading to the introduction of the novel Incident-Excitation algorithm
for efficiently computing the resonances excited by a given incident
field. In Section~\ref{sec:RS}, the singularity subtraction procedure
is described in detail, along with a numerical technique for
evaluating the singular contributions and their asymptotic
expansion. Section~\ref{sec:Num_Imp} then provides a complete
description of the FTH-SS method. Finally, Section~\ref{sec:Num_Res}
presents numerical experiments that demonstrate the accuracy and
effectiveness of the proposed approach. These results highlight the
features of the incidence-excited resonance evaluation algorithm, the
smoothing effects of the singularity subtraction strategy, the
accuracy and convergence of the overall methodology, its capacity for
efficient long-time simulation, and compelling numerical evidence
supporting the validity of the singularity expansion in highly
trapping configurations.

\section{Preliminaries}\label{sec:prelims}
We are concerned with the problem of scattering of waves governed by
the wave equation
\begin{subequations}\label{eqn:wave}
    \begin{align}
    \frac{\partial^2 u}{\partial t^2}(\br,t) &- c^2 \Delta u(\br,t) = 0, \quad \br \in \Omega^\mathrm{ext}\\
    u(\br,0) &= \frac{\partial u}{\partial t}(\br,0) = 0, \quad \br \in \Omega^\mathrm{ext}\\
    u(\br,t) &= b(\br,t) \quad \text{for} \quad (\br,t) \in \Gamma \times [0,T^{\mathrm{inc}}]\label{eqn:wavebnd}
\end{align}
\end{subequations}
where the open set $\Omega^\mathrm{ext}\subset \R^2$ is an ``exterior
domain'' with boundary $\Gamma$, which equals either the exterior of a
{\em closed} curve $\Gamma$ (such as e.g. the the unit circle) or the
complement of an {\em open} curve $\Gamma$ (such as a straight
segment, a circular section, etc.). The methods and ideas to be
developed, which concern the frequency-time duality, should be
applicable in both 2D and 3D contexts, but, for the sake of
simplicity, this paper is restricted to the 2D context only.  For a
given incident field $u^\mathrm{inc}$ defined in the exterior of
$\Gamma$, the solution $u$ of the problem~\eqref{eqn:wave} with
boundary values $b = -u^\mathrm{inc}$ is the ``scattered field''; the
total field in the exterior of $\Gamma$ is thus given by
$u^\mathrm{tot}(\br,t) = u^\mathrm{inc}(\br,t) + u(\br,t)$. With these
notations, the time domain boundary conditions~\eqref{eqn:wavebnd}
become
\begin{equation}\label{eqn:wave_bc}
    u^\mathrm{tot}(\br,t)  = u^\mathrm{inc}(\br,t) + u(\br,t) = 0, \quad \br \in \Gamma.
\end{equation}

To solve equation~\eqref{eqn:wave} we build upon the Fourier transform
based method recently introduced in~\cite{abl}, which we refer to
henceforth as the Frequency-Time Hybrid method (FTH). In brief, the FTH method utilizes the Fourier transforms $U = U(\br,\omega)$ and $B= B(\br,\omega)$ of the functions $u$
and $b$ with respect to time, respectively. It obtains $U$ as the solution of the Helmholtz equation problem 
\begin{subequations}\label{eqn:helmholtz}
    \begin{align}
    \Delta U(\br,\omega) + \kappa^2(\omega)U(\br,\omega) &= 0, \quad \br \in \Omega^\mathrm{ext}\label{eqn:helm_pde}\\
    U(\br,\omega)  &= B(\br,\omega) \quad \br \in \Gamma,\label{eqn:helm_bndc}\\
    \lim_{|\br| \to \infty}\sqrt{|\br|} \left(\frac{\partial U}{\partial |\br|} - \mathrm{i}\kappa(\omega)U\right) &= 0, \text{ uniformly in all directions } \br / |\br|
\end{align}
\end{subequations}
with linear dispersion relation $\kappa = \kappa(\omega) = \omega /c$,
where $\mathrm{i}$ denotes the complex unit. It then produces the
time-domain solution $u(\br,t)$ as the inverse Fourier transform of
$U$.

As pointed out in~\cite{abl}, a straightforward application of these
ideas presents a number of difficulties which, however, may be
effectively bypassed to yield an effective, fast an accurate
time-domain computational technique for the solution of the
problem~\eqref{eqn:wave}. A brief discussion concerning these
challenges and their resolution is presented in
Section~\ref{sec:abl_rev}. The method of boundary integral operators
for computing the solution to the Helmholtz
problem~\eqref{eqn:helmholtz} for open and closed curves are then
reviewed in Section~\ref{sec:inteqs}. Certain specialized
high-frequency quadrature rules that are used in the inverse Fourier
transform process are outlined in Section~\ref{sec:inv_four_tran}, and
finally a brief discussion of complex resonances is given in
Section~\ref{sec:complex_resonances}.

\subsection{Frequency-time hybrid method}\label{sec:abl_rev}
For definiteness throughout this paper we restrict attention to one of the
most commonly occurring boundary conditions, namely, incident fields
which impinge along a single direction $\bp$---so that
$b(\br,t) = a(t - \bp \cdot \br /c)$---but general boundary conditions
can be treated similarly~\cite{ablII}.  Further, the function $a(t)$
is assumed infinitely smooth and compactly supported in the 
interval $[0,T^\mathrm{inc}]$; see also
Remark~\ref{rmk:incident}. Under this assumption the boundary
condition function $b(\br,t)$ in~\eqref{eqn:wavebnd} may be expressed
in the form
\begin{equation}\label{eqn:singleinc}
  b(\br,t) = \frac{1}{2\pi}\int_{-\infty}^\infty A(\omega)B_\bp(\br,\omega) e^{-\mathrm{i}\kappa(\omega)ct}d\omega, \quad \text{where} \quad A(\omega) = \int_{-\infty}^{\infty} a(t)e^{\mathrm{i}\omega t}dt,
\end{equation}
and where
\begin{equation}\label{eq:BP}
B_\bp(\br,\omega) = e^{\mathrm{i}\kappa(\omega) \bp \cdot \br}
\end{equation}
---with integrals that can be produced with high accuracy by means of
the FTH specialized quadrature rules reviewed in this section and
Section~\ref{sec:inv_four_tran}.

\begin{remark}\label{rmk:incident}
 The proposed Fourier transform approach can continue to be used
  with high accuracy even when the time-domain boundary data $b(t)$
  or the function $a(t)$ do not vanish at $t = T^\mathrm{inc}$. This
  is achieved by suitably extending the given function to one that
  vanishes smoothly for some time $T > T^\mathrm{inc}$; by causality,
  the resulting solution $u$ coincides with the solution sought up to
  time $t= T^\mathrm{inc}$. Additionally, incident fields which only approximately vanish up a numerical tolerance $\tau$ at $t = T^{\mathrm{inc}}$ can also be treated effectively by the proposed approach.
\end{remark}

The direct computation of $A(\omega)$ according to
equation~\eqref{eqn:singleinc} presents certain challenges for large
values of $T^\mathrm{inc}$---that is to say, in cases for which the
incidence-field function $a(t)$ continues to take non-vanishing values
up to large times $t$. Indeed, for such large $t$ values the
exponential factor $e^{\mathrm{i}\omega t}$ in the second integral
of~\eqref{eqn:singleinc} is highly oscillatory with respect to
$\omega$. Consequently, the integral $A(\omega)$ also becomes highly
oscillatory, and, thus, its evaluation over a dense set of frequency
points is necessary to ensure adequate sampling. This, in turn,
requires the solution of frequency-domain problems for a large number of
frequencies $\omega$, resulting in significant computational costs.

To tackle this and related issues concerning high-frequency
integration, a partition-of-unity set $\{w_k(t)\ :\ k = 1,\dots,K\}$
of windowing functions is used, where each function $w_k$ is supported
in the interval $[s_k - H, s_k + H]$ for a corresponding support
center $s_k\in [0,T^\mathrm{inc}]$ and $k$-independent window size $H$, and where
the functions $w_k$ satisfy the ``partition-of-unity'' property
\begin{equation}\label{POU}
  \sum_{k = 1}^Kw_k(t) = 1\quad \mbox{for}\quad t\in [0,T^\mathrm{inc}].
\end{equation}
In this work we use the window functions $w_k(t) = w(t - s_k)$ where
\begin{equation}\label{eqn:window_func}
  w(t;H) = w(t) = \begin{cases}
    1, & |t| < \alpha H\\
    \frac{1}{2} \mathrm{erfc}\left[-\rho + 2\rho \left(\frac{|t| - \alpha H}{(1 - \alpha)H} \right)\right],& \alpha H \le |t| \le H\\
    0, & |t| > H.
           \end{cases}
\end{equation}
For a given window size $H$ the partition of unity condition~\eqref
{POU} can be made to hold by appropriately choosing the centers
$s_k$---as it follows easily from the fact that
$\mathrm{erf}(t) = 1 - \mathrm{erfc}(t)$ is an odd function of
$t$. Note that, while the functions $w_k (t)$ obtained in this fashion
are not strictly compactly supported, they do tend to zero extremely
fast as $t$ grows, and they essentially vanish at $t-s_k = H$, to any
prescribed numerical precision $\varepsilon_\mathrm{np}$, provided the
value of the parameter $\rho$ is selected appropriately. Throughout
this paper the values, $\alpha = 0.5$, and $\rho = 5.805$ were used,
for which $|w(t)| \leq \varepsilon_\mathrm{np} = 1.1\cdot 10^{-16}$
for $|t|\geq H$, with a corresponding departure from one of less than
$\varepsilon_\mathrm{np}$ for $|t| = \alpha
H$. Following~\cite[Sec. 3.1]{abl} in all cases we set
$s_k = 3(k - 1)H/2$ and $H = 10$.

Letting $a_k(t) = w_k(t)a(t)$ we write
\begin{equation}\label{eqn:BandBk}
    A(\omega) = \sum_{k=1}^K A_k(\omega) \quad \text{ where } \quad A_k(\omega) = \int_{s_k - H}^{s_k + H}a_k(t)e^{\mathrm{i}\omega t} dt.
\end{equation}
A change of variables to recenter the integration around the origin gives 
\begin{equation}\label{eqn:bkslow}
    A_k(\omega) = \int_{-H}^Ha_k(t)e^{\mathrm{i}\omega (t + s_k)}dt = e^{\mathrm{i}\omega s_k} \int_{-H}^Ha_k(t)e^{\mathrm{i}\omega t}dt = e^{\mathrm{i}\omega s_k} A_k^{\mathrm{slow}}(\omega)
\end{equation}
where, as implied by the notation in equation~\eqref{eqn:bkslow},
$A_k^{\mathrm{slow}}(\omega)$ is defined to equal the second integral
in that equation.
\begin{remark}\label{rmk:Bkslow}
  Clearly, $A_k^{\mathrm{slow}}$ is a ``slowly varying'' function of
  $\omega$, in that its derivatives with respect to $\omega$ are
  uniformly bounded for all $k$, provided $a$, and, thus, $a_k$ for
  all $k$, are bounded functions of $t$. As a result these functions
  may be represented numerically on the basis of their values at fixed
  numbers of discretization points.
\end{remark}

For $k = 1,\dots, K$ we then define windowed time domain boundary
functions
\begin{equation}
    b_k(\br,t) = \frac{1}{2\pi}\int_{-\infty}^\infty A_k(\omega)B_\bp(\br,\omega) e^{-\mathrm{i}\kappa(\omega)ct}d\omega = \frac{e^{\mathrm{i}\omega s_k}}{2\pi} \int_{-\infty}^\infty A_k^{\mathrm{slow}}(\omega) B_\bp(\br,\omega) e^{-\mathrm{i}\kappa(\omega)ct}d\omega.
\end{equation}
Then denoting by $u_k(\br,t)$ the solution to~\eqref{eqn:wave} with $b(\br,t)$ replaced with $b_k(\br,t)$ we obtain
\begin{equation}\label{eqn:u_sum_uk}
  u(\br,t) = \sum_{k=1}^K u_k(\br,t).
\end{equation}

It is important to note that, because $a(t)$ and $a_k(t)$ are both
infinitely smooth and compactly supported, they are essentially
band-limited. Indeed, a straightforward argument based on repeated
integrations-by-parts shows that the associated Fourier transforms
$A(\omega)$, $A_k(\omega)$, and $A_k^{\mathrm{slow}}(\omega)$ decay
super-algebraically fast as $|\omega| \to \infty$---that is, faster
than any negative power of $\omega$. For example, given that $a(t)$ is
compactly supported in the interval $[0,T^\mathrm{inc}]$, integrating
by parts $n$ times the second expression in~\eqref{eqn:singleinc}
yields
\begin{equation}\label{eqn:Asuperalg}
  |A(\omega)| \le  \frac{1}{|\omega|^n}\int_0^{T^\mathrm{inc}} |a^{(n)}(t)|dt\qquad \mbox{for all}\qquad n\in\mathbb{N}.
\end{equation}
It follows that, for a given interval 
\begin{equation}
  \label{eq:fr_int}
  I =I(W_1,W_2)= [W_1,W_2]\qquad\text{with} \qquad W_1< 0 < W_2
\end{equation}
we have
\begin{equation}\label{eqn:AepsBnd}
  |A(\omega)|\leq  \varepsilon(\mu(W_1,W_2))\quad\text{for} \quad \omega\not\in I,
\end{equation}
where
\begin{equation}\label{mu}
 \mu(W_1,W_2) = \min\{W_2,-W_1\},
\end{equation}
and where
\begin{equation}\label{eqn:eps_m_def}
  \varepsilon(\mu) = \varepsilon(\mu(W_1,W_2)) =\inf_{n \in \mathbb{N}} \frac{1}{|\mu(W_1,W_2)|^n}\int_0^{T^\mathrm{inc}} |a^{(n)}(t)|dt\to 0\quad\text{super-algebraically fast}
\end{equation}
as $\mu = \mu(W_1,W_2)\to +\infty$. (Note that the condition
$\mu(W_1,W_2)\to +\infty$ is equivalent to $W_1\to -\infty$ and
$W_2\to+\infty$.) Clearly, similar estimates hold for $A_k(\omega)$
and $A_k^{\mathrm{slow}}(\omega)$. In particular, we have
\begin{equation}
  \label{eq:band-limit-B}
  A(\omega) \approx 0,\quad A_k(\omega) \approx 0,\quad \mbox{and}\quad A_k^{\mathrm{slow}}(\omega)\approx 0\quad \mbox{for} \quad\omega\not\in I(W_1,W_2)\quad\mbox{as}\quad \mu(W_1,W_2)\to +\infty.
\end{equation}

Letting $U_k(\br,\omega)$ and $U_k^{\mathrm{slow}}(\br,\omega)$ denote
the solutions of equation~\eqref{eqn:helmholtz} with boundary values
$B(\br,\omega) = A_k(\omega)B_\bp(\br,\omega) $ and
$B(\br,\omega) = A_k^{\mathrm{slow}}(\omega)B_\bp(\br,\omega) $, respectively, it follows that $u_k(\br,t)$ may be expressed
in the forms
\begin{equation}\label{wind_uk}
    u_k(\br,t) = \frac{1}{2\pi}\int_{-\infty}^{\infty}U_k(\br,\omega)e^{-\mathrm{i}\omega t}d\omega = \frac{1}{2\pi}\int_{-\infty}^{\infty}U_k^{\mathrm{slow}}(\br,\omega)e^{-\mathrm{i}\omega (t - s_k)}d\omega.
\end{equation}
Further, in view of~\eqref{eqn:AepsBnd}--\eqref{eq:band-limit-B} it follows that
\begin{equation}
  \label{eq:band-limit-U}
  U(\omega) \approx 0,\quad U_k(\omega) \approx 0,\quad \mbox{and} \quad  U_k^\mathrm{slow}(\omega) \approx 0 \quad \mbox{for} \quad\omega\not\in I,
\end{equation}
and $u_k(\br,t)$ is closely approximated by a
Fourier integral supported in the fixed interval $I=I(W_1,W_2)$,
\begin{equation}\label{eqn:ukw}
    u_k(\br,t) \approx u^{I}_k(\br,t) = \frac{1}{2\pi}\int_{W_1}^{W_2}U_k^{\mathrm{slow}}(\br,\omega)e^{-\mathrm{i}\omega (t - s_k)}d\omega,
\end{equation}
with errors that decay super-algebraically fast, and uniformly for
$(\br,t)\in \Omega^\mathrm{ext}\times\mathbb{R}$ and $k\in\mathbb{N}$,
as $\mu(W_1,W_2)$ grows. The wave equation
solution~\eqref{eqn:u_sum_uk} may then be approximated by summation over $k$:
\begin{equation}\label{eqn:uab}
    u(\br,t) \approx u^{I}(\br,t) \coloneq \sum_{k=1}^K u^{I}_k(\br,t).
\end{equation}

The required frequency-domain solutions
$U_k^{\mathrm{slow}}(\br,\omega)$ may be obtained by means of any
available Helmholtz solver. In this paper we employ layer potential
methods for this purpose; the specific methods we use are reviewed in
the following section.

\subsection{Frequency-domain integral equation solutions}\label{sec:inteqs}
We consider first the case in which $\Gamma$ is a closed curve, and we
define the single-layer $\mathcal{S}_\omega[\psi](\br)$ and
double-layer potentials $\mathcal{K}_\omega[\psi](\br)$ for a certain
density function $\psi$,
\begin{equation}
    \mathcal{S}_\omega[\psi](\br) = \int_\Gamma G_\omega(\br,\br')\psi(\br',\omega)d\sigma(\br') \quad \text{and} \quad \mathcal{K}_\omega[\psi](\br)=\int_{\Gamma}\frac{\partial G_\omega(\br,\br')}{\partial n(\br')}\psi(\br',\omega)d\sigma(r'), \quad \br \in \Omega^e.
\end{equation}
Here
\begin{equation}\label{eqn:green}
    G_\omega(\br,\br') = \frac{\mathrm{i}}{4}H_0^1\left(\frac{\omega}{c}|\br - \br'| \right)
\end{equation}
denotes the 2D Helmholtz Green function, where $H_0^1$ is the
zeroth-order Hankel function of the first kind.  Then for the closed
curves $\Gamma$ under consideration, the solution $U$ of the Helmholtz
problem~\eqref{eqn:helmholtz} can be represented in the combined-field
form
\begin{equation}\label{eqn:combined_rep}
  U(\br,\omega) = \mathcal{C}_{\omega,\eta}[\psi](\br,\omega) \coloneqq \mathcal{K}_\omega[\psi](\br,\omega) - \mathrm{i}\eta \mathcal{S}_\omega[\psi](\br,\omega), \quad \br \in \Omega^{\mathrm{e}},  
\end{equation}
where $\eta\in\mathbb{R}$, $\eta \ne 0$. Using the frequency domain single-
and double-layer operators
 \begin{equation}\label{eqn:sl_dl_operator}
    (S_\omega\psi)(\br) = \int_\Gamma G_\omega(\br,\br')\psi(\br',\omega)d\sigma(r') \quad \text{and} \quad (K_\omega \psi)(\br) = \int_{\Gamma}\frac{\partial G_\omega(\br,\br')}{\partial n(\br')}\psi(\br',\omega)d\sigma(r'), \quad \br \in \Gamma,
\end{equation}
respectively, and defining the combined-field boundary integral operator
\begin{equation}\label{eqn:combined_op}
     (C_{\omega,\eta} \psi) \coloneqq \frac{1}{2} \psi(\br,\omega) + (K_\omega \psi)(\br) - \mathrm{i}\eta(S_\omega \psi)(\br), \quad \br \in \Gamma,
 \end{equation}
the density $\psi(\br',\omega)$ may be obtained as the
unique solution of the integral equation
\begin{equation}\label{eqn:combined_bie}
   (C_{\omega,\eta} \psi) = B(\br,\omega), \quad \br \in \Gamma.
\end{equation}

As is well known the representation~\eqref{eqn:combined_rep} is not
applicable in case $\Gamma$ is an open curve. In such cases, the
solution of the Helmholtz problem~\eqref{eqn:helmholtz} may instead be
represented in the form
\begin{equation}\label{eqn:slp_rep_freq}
    U(\br,\omega) = \mathcal{S}_\omega[\phi](\br,\omega), \quad \br \in \Omega^{\mathrm{e}}
\end{equation}
where $\phi$ denotes the unique solution of the boundary integral equation
\begin{equation}\label{eqn:open_bie}
  (S_\omega\phi)(\br) = B(\br,\omega), \quad \br \in \Gamma.
\end{equation}

The numerical implementations used in this paper for the closed-curve
operator~\eqref{eqn:combined_op} are based on the Nystr\"{o}m
methods~\cite[Sec. 3.6]{COLTON:2019}. The corresponding
implementations~\cite{bruno2013high} for the single-layer operator, in
turn, are used for the open-arc problem. Following the latter
reference, in particular, in the open arc case a smooth
parametrization $\br = \br(t)$ of $\Gamma$ $(-1 \le t \le 1)$ is used
to express the integral density $\phi$ in the form
\begin{equation}
  \label{eq:phi-psi}
  \phi(\br(t')) = \psi(\br(t')) / \sqrt{1 - t'^2},
\end{equation}
where $\psi$ is a smooth function, and where the square-root
denominator explicitly accounts for the singularities of the density
$\phi$ at the edges of the open curve $\Gamma$. Using $\psi$ as the
unknown we may write
\begin{equation}
  \label{eq:arc_COV}
  (S^\mathrm{arc}_\omega\psi) = (S_\omega\phi), 
\end{equation}
for a certain operator
$S^\mathrm{arc}_\omega$~\cite{lintner2015generalized,bruno2013high},
and equation~\eqref{eqn:open_bie} becomes
\begin{equation}\label{eqn:open_bie-sqrt}
  (S^\mathrm{arc}_\omega\psi)(\br) = B(\br,\omega), \quad \br \in \Gamma;
\end{equation}
the solution $U$, given by equation~\eqref{eqn:slp_rep_freq} with
$\phi$ re-expressed in terms of $\psi$ via
equation~\eqref{eq:phi-psi}, can be written as
\begin{equation}\label{eqn:slp_rep_freq_arc}
  U(\br,\omega) = \mathcal{S}^\mathrm{arc}_\omega[\psi](\br,\omega), \quad \br \in \Omega^{\mathrm{e}}
\end{equation}
for a certain operator $\mathcal{S}^\mathrm{arc}_\omega$. (An
application of the change of variables $t' = \cos(\theta')$ in the
$\br = \br(t')$ parametrized version of
equation~\eqref{eqn:open_bie-sqrt} produces a Jacobian which exactly
cancels the (explicit) edge singularity, and, further, it enables the
representation of the singular function $\phi$ in terms of a rapidly
convergent cosine series for the smooth density $\psi$. The open-curve
algorithm is completed~\cite{bruno2013high} by exploiting a quadrature
rule that leverages exact integration of the product of cosine Fourier
basis functions and a logarithmic kernel.)  Throughout this paper the
symbol $H_\omega$ is used to denote either $C_{\omega,\eta}$ or
$S^\mathrm{arc}_\omega$, depending on context:
\begin{equation}
  \label{eq:C_or_S}
  H_\omega = C_{\omega,\eta}\quad\mbox{for closed curve problems, and}\quad H_\omega = S^\mathrm{arc}_\omega\quad \mbox{for open arc problems.}
\end{equation}

Clearly, the function $U_k^{\mathrm{slow}}$ in~\eqref{eqn:ukw}, that
is required by the FTH method to produce the $k$-th time-domain
solution $u_k$, is the solution of the Helmholtz
equation~\eqref{eqn:helmholtz} with boundary values related to
$B_\bp(\br,\omega)$ in~\eqref{eq:BP}: 
\begin{equation}
  \label{eq:bnd_cond_omegaj}
  B(\br,\omega) = A_k^{\mathrm{slow}}(\omega)B_\bp(\br,\omega) = A_k^{\mathrm{slow}}(\omega)e^{\mathrm{i}\kappa(\omega)\bp
  \cdot \br}.
\end{equation}
As indicated in what follows, the functions $U_k^{\mathrm{slow}}$ may
be efficiently obtained, for both closed- and open-curve problems, on
the basis of the integral formulations just described.  To achieve
this, we define the $k$-independent set 
\begin{equation}\label{eqn:F_freqs}
    \mathcal{F} = \{\omega_1, \dots, \omega_J\} \subset I
\end{equation}
(see~\eqref{eq:fr_int}) containing $J$ equispaced
frequencies, which is assumed to be sufficient for evaluating the
integral in~\eqref{eqn:ukw}---for all $k$ and within a given error
tolerance---once again by means of the FTH specialized quadrature
rules based on windowing and recentering described in
Section~\ref{sec:inv_four_tran}.  Further, letting
\begin{equation}
  \label{eq:psi_p_sol}
  \psi_\bp(\br',\omega)\quad = \quad\mbox{``Solution $\psi =\psi(\br',\omega)$ of~\eqref{eqn:combined_bie} or~\eqref{eqn:open_bie-sqrt}, as applicable, with $B(\br,\omega) = B_\bp(\br,\omega)$''}.
\end{equation}
we define the $k$-independent density-solution set
\begin{equation}\label{eqn:dens_set}
  \mathcal{D}_\mathcal{F} =\{\psi_\bp(\cdot,\omega_j)\, :\, 1 \le j \le
  J\}.
\end{equation}
Calling
\begin{equation}
  \label{eq:Up}
  U_\bp(\br,\omega)\mbox{ the 
    Helmholtz solution given  by~\eqref{eqn:combined_rep} or~\eqref{eqn:slp_rep_freq_arc}, as applicable, with  density
    $\psi = \psi_\bp$,}
\end{equation}
(wherein, clearly, $U_\bp$ is independent of $k$, and where $\psi_\bp$
is expressed in terms of a corresponding density $\phi_\bp$, in
accordance with~\eqref{eq:phi-psi}, in the open-arc case), the $k$-th
Helmholtz solution $U_{k}^{\mathrm{slow}}(\br,\omega_j)$
($1\leq j\leq J$), which takes on the boundary
values~\eqref{eq:bnd_cond_omegaj} for $\br\in\Gamma$, is given by
\begin{equation}
  \label{eq:ukslow}
  U_k^{\mathrm{slow}}(\br,\omega_j) =
  A_{k}^{\mathrm{slow}}(\omega_j)U_\bp(\br,\omega_j).
\end{equation}
Thus, as suggested above, the
$k$-independent set of solutions
$\mathcal{D}_\mathcal{F}$ suffices to evaluate the necessary functions
$U_k^{\mathrm{slow}}$ for all $k$.

In what follows the discrete forms (given
in~\cite[Sec. 3.6]{COLTON:2019} and~\cite{lintner2015generalized},
respectively) of the operators $C_{\omega,\eta}$ and
$S^\mathrm{arc}_\omega$ in~\eqref{eqn:combined_op}
and~\eqref{eq:arc_COV} on an $N$-point discretization
$\{\br_1,\dots,\br_N \}$ of $\Gamma$ are respectively denoted by
\begin{equation}
  \label{eq:discr_opers}
  C_{\omega,\eta}^N \quad\mbox{and}\quad S_\omega^{\mathrm{arc},N}.
\end{equation}
Paralleling~\eqref{eq:C_or_S} we let
\begin{equation}
  \label{eq:C_or_S-num}
  \widetilde{H}_\omega = C_{\omega,\eta}^N\quad\mbox{for closed curve problems, and}\quad \widetilde{H}_\omega = S_\omega^{\mathrm{arc},N}\quad \mbox{for open arc problems.}
\end{equation}
The corresponding numerical solutions of~\eqref{eqn:combined_bie}
or~\eqref{eqn:open_bie-sqrt}, as applicable, with discrete boundary
values
\begin{equation}
  \label{eq:BP-disc}
   \widetilde{B}_\bp(\omega) = (\widetilde{B}_{\bp,1}(\omega)),\dots,\widetilde{B}_{\bp,N}(\omega))  = (e^{\mathrm{i}\kappa(\omega)\bp \cdot \br_i})_{i=1}^N
\end{equation}
(cf.~\eqref{eq:BP}), are denoted by
\begin{equation}\label{disc_vrsn}
  \widetilde\psi_{\bp}(\omega)  = \widetilde{H}_{\omega}^{-1} \widetilde{B}_\bp(\cdot ,\omega);\quad  \widetilde\psi_{\bp}=\widetilde\psi_{\bp}(\omega) = (\widetilde\psi_{\bp,1},\dots,\widetilde\psi_{\bp,N})^T\approx (\psi_\bp(\br_1,\omega),\dots,\psi_{\bp}(\br_N,\omega))^T.
\end{equation}
Once the numerical density vector $\widetilde\psi_{\bp}(\omega)$ has
been obtained, the numerical approximation
$\widetilde{U}_\bp(\mathbf{r},\omega)$ at any given point
$\mathbf{r}\in\Omega^\mathrm{ext}$ is obtained for closed curve and
open are problems via
\begin{equation}\label{eqn:Unum}
 \widetilde{U}_\bp(\br, \omega) = \mathcal{C}_{\rho_n,\eta}^N[\widetilde\psi_{\bp}](\br)\quad \text{and} \quad \widetilde{U}_\bp(\br, \omega) =  \mathcal{S}^{\mathrm{arc},N}_{\omega}[ \widetilde\psi_{\bp}](\br),
\end{equation}
respectively, where
\begin{equation}\label{eqn:disc_prop_ops}
    \mathcal{C}_{\rho_n,\eta}^N \quad \text{and} \quad  \mathcal{S}^{\mathrm{arc},N}_{\omega}
\end{equation}
denote discrete versions of the continuous operators
$\mathcal{C}_{\omega,\eta}$~\eqref{eqn:combined_rep} and
$\mathcal{S}^\mathrm{arc}_\omega$~\eqref{eqn:slp_rep_freq_arc}
respectively~\cite{COLTON:2019,lintner2015generalized}.

\subsection{$O(1)$-cost Fourier transform at large times}\label{sec:inv_four_tran}
While, as indicated in Remark~\ref{rmk:Bkslow}, $A_k^{\mathrm{slow}}$
is a slowly oscillatory function of $\omega$, the numerical evaluation
of the integral in~\eqref{eqn:ukw} still requires the use of
increasingly finer discretization meshes as $t$ grows, on account of
the fast oscillations exhibited by the exponential term
$e^{-\mathrm{i}\omega (t - s_k)}$, as function of $\omega$ for large
$t$. As a result, the evaluation of the quantity $u^{I}_k(\br,t)$ by
means of classical quadrature rules requires increasingly fine
frequency meshes, and, thus, increasing numbers of expensive
Helmholtz-equation solutions, as $t$ increases. The FTH
algorithm~\cite{abl} addresses this challenge by employing a
$O(1)$-cost high-frequency quadrature rule for the integrals
in~\eqref{eqn:ukw}, which is reviewed in what follows.

The quadrature rule in~\cite{abl} relies on a truncated Fourier
expansion of the function $U_k^\mathrm{slow}(\br,\omega)$ in
equation~\eqref{eqn:ukw} (cf. equation~\eqref{eqn:F_trig_approx}
below) for $\omega \in I$~\eqref{eq:fr_int}. This approach is
effective because (i)~$U_k^\mathrm{slow}$ is a smooth function of
$\omega$ for $\omega\in I$ (see Remark~\ref{rmk:zer_freq}); and,
(ii)~$U_k^\mathrm{slow}$ can be closely approximated in the interval
$I$ by a smooth and periodic function of period $W_2-W_1$---as it
follows from~\eqref{eq:band-limit-U} and similar relations on the
derivatives of $U_k^\mathrm{slow}$. The high-frequency rule is
presented below in the general setting of
equation~\eqref{eqn:gen_four_int}. In the context of this paper it is
important to note that, for trapping obstacles, nearly-real complex
resonances emerge (see Section~\ref{sec:complex_resonances}), which
leads to very slow Fourier-series convergence . A strategy for
overcoming this difficulty, which is a central contribution of this
paper, is provided in Section~\ref{sec:RS}.

\begin{remark}\label{rmk:zer_freq} 
  As is well known, the solutions to the 2D Helmholtz equation as
  functions of frequency $\omega$ exhibit logarithmic singularities at
  $\omega = 0$~\cite{maccamy1965low, werner1986low} whenever the
  incident field as a function of $\omega$ (equal to the Fourier
  transform of the given temporal excitation) does not vanish in a
  neighborhood of $\omega = 0$. (Such singular behavior does not occur
  in the 3D case.) Within the framework of the 2D FTH method, temporal
  excitations with such nontrivial zero-frequency content give rise to
  frequency-domain functions $F(\omega)$ in~\eqref{eqn:gen_four_int}
  containing logarithmic singularities at $\omega = 0$. Such
  singularities, in turn, lead to slow temporal decay of the Fourier
  transform $\mathcal{I}^{[a,b]}[F](t)$, and, thus, of each solution
  $u_k$~\eqref{wind_uk}. Nonetheless, the FTH method remains valid in
  the presence of non-vanishing zero-frequency content~\cite{abl}, and
  the associated slow decay can be effectively addressed using
  suitable asymptotic expansions~\cite{ablII}. Consequently, the
  techniques proposed in this paper are extensible to such cases. For
  simplicity and definiteness, however, this paper restricts attention
  to incident excitations whose Fourier transforms vanish in a
  neighborhood of $\omega = 0$.
\end{remark}

To present the FTH integration method, we consider integrals of the form
\begin{equation}\label{eqn:gen_four_int}
    \mathcal{I}^{[a,b]}[F](t) = \int_{a}^b F(\omega)e^{-\mathrm{i}\omega t}d\omega,
\end{equation}
where $F(\omega)$ is a smooth and periodic function of period $(b-a)$
(cf. points~(i) and~(ii) above). In order to efficiently evaluate the
integral~\eqref{eqn:gen_four_int} for arbitrarily large values of $t$
we first re-express that integral in the form
\begin{equation}\label{eqn:FT_int_no_zf}
    \mathcal{I}^{[a,b]}[F](t) = e^{-\mathrm{i}\delta t}\int_{-W}^W F(\delta + \omega)e^{-\mathrm{i}\omega t}d\omega \quad \text{where} \quad W = \frac{b - a}{2} \quad \text{and} \quad \delta = \frac{b + a}{2}.
\end{equation}
The function $F(\delta + \omega)$ is then approximated by a trigonometric polynomial of the form
\begin{equation}\label{eqn:F_trig_approx}
    F(\delta + \omega) \approx \sum_{m = -M/2}^{M/2 - 1} c_me^{\mathrm{i}\frac{2\pi}{P}m\omega},
  \end{equation}
where $P = 2W$. To complete the quadrature rule we write $\alpha = \frac{P}{2\pi}$,  substitute~\eqref{eqn:F_trig_approx} into~\eqref{eqn:FT_int_no_zf} and integrate termwise---which results in the highly accurate approximation
\begin{equation}\label{NEFFT}
  \mathcal{I}^{[a,b]}[F](t) \approx e^{-\mathrm{i}\delta t} \sum_{m = -M/2}^{M/2 - 1} c_m \frac{P}{\pi(\alpha t - m)}\sin\left(\pi(\alpha t - m)\right)
\end{equation}
---which may be evaluated at fixed cost for arbitrarily large values
of $t$. As noted in~\cite{abl}, finally, the expression~\eqref{NEFFT}
may be produced over prescribed equispaced sets of times $t$ at FFT
speeds by employing the fractional Fourier transform.

\subsection{Complex resonances}\label{sec:complex_resonances}
Let $\mathcal{U}=\mathcal{U}_\omega$ denote the solution operator to
the Helmholtz problem for either open or closed curves at a given
frequency $\omega$: for a given ``boundary-values'' function
$B = B(\mathbf{r})$ in the space $H$ equal to
$H^{1/2}(\Gamma)$ for closed curves and equal to
$\widetilde{H}^{-1/2}(\Gamma)$ for open curves, we have
\begin{equation}
  \label{eq:sol_op}
  \mathcal{U}[B] = U,\quad \mbox{where}\quad U\quad \mbox{solves the problem~\eqref{eqn:helmholtz}}.
\end{equation}
(For detailed definitions of the spaces $H^{1/2}(\Gamma)$ and
$\widetilde{H}^{-1/2}(\Gamma)$ see~\cite{mclean2000strongly}
and~\cite{stephan1984augmented,lintner2015generalized}, respectively.)
Using the notation $B=B_\omega$ to explicitly display the
$\omega$-dependence of the given boundary data we also write e.g.
\begin{equation}
  \label{eq:abuse}
  \mathcal{U}=\mathcal{U}_\omega\quad
  U= \mathcal{U}_\omega\big[B\big]=\mathcal{U}_\omega\big[B_\omega\big]\quad\mbox{and}\quad U(\br,\omega) = \mathcal{U_\omega}\big[B_\omega\big](\br)
\end{equation}
for the solution operator, the solution $U$ and the values of the
solution $U$ for given $\omega$ and $\br$, as needed.

As shown in~\cite{taylorbook} and~\cite{bst} in the closed-curve and
open-arc contexts, respectively, the operator $\mathcal{U}_\omega$,
which is defined for all real values of $\omega$, admits a meromorphic
continuation into the complex $\omega$ plane, with all poles of
$\mathcal{U}_\omega$ contained in the lower half-plane. (Henceforth, we refer to the
poles of $\mathcal{U}_\omega$ as complex resonances.) In the 2D case
a branch cut must also be introduced in order to account for a
logarithmic singularity at $\omega = 0$. The analytic
continuation is performed by expressing $\mathcal{U}_\omega$ in terms
of integral operators.  In the open-arc case, for example, denoting by
$\mathcal{U}^\mathrm{o}_\omega$ the solution operator for open arcs
we may write~\eqref{eqn:slp_rep_freq}
\begin{equation}\label{eqn:sol_op_oa}
    \mathcal{U}^\mathrm{o}_\omega[B] = \mathcal{S}_\omega[(S_\omega)\inv B].
\end{equation}
Letting $\mathcal{U}^\mathrm{c}_\omega$ denote the operator for closed
curves, in turn, we  have~\eqref{eqn:combined_rep}
\begin{equation}\label{eqn:sol_op_comb}
  \mathcal{U}^\mathrm{c}_\omega[B] =  \mathcal{K}_\omega[(C_{\omega,\eta})\inv B](\omega) - \mathrm{i}\eta \mathcal{S}_\omega[(C_{\omega,\eta})\inv B].
\end{equation}

The $\omega$-dependent boundary integral operators utilized to derive
the analytic continuation of $\mathcal{U}(\omega)$ also enable the
numerical computation of complex resonances for both closed
curves~\cite{steinbach2017combined} and open arcs~\cite[Sec.
2]{bst}. In detail, in view of~\eqref{eqn:sol_op_oa}, for open curves
the poles of the inverse Single Layer Potential $(S_\omega)^{-1}$ in
the lower half-plane correspond to the complex resonances of the
solution operator, and, thus the complex resonances may be
approximated numerically as the poles of the discrete operator
$(S_\omega^N)^{-1}$ in~\eqref{eq:discr_opers}. In the case of closed
curves, however, special care is required in selecting the
combined-field coupling parameter $\eta$ in~\eqref{eqn:combined_rep}
(cf.~\cite{taylorbook}, where, for a different purpose, a choice is
made that differs from the one introduced below which is not suitable
for our setting).  Indeed, as established
in~\cite{steinbach2017combined} for a certain combined field operator
$\widetilde{C}_{\omega,\eta}$ associated with the Neumann problem,
choosing a coupling parameter $\eta > 0$ causes the inverse
$(\widetilde{C}_{\omega,\eta})^{-1}$ to have poles in the lower
half-plane that do not correspond to complex resonances. However, for
$\eta < 0$, these issues do not arise, and the poles of
$(\widetilde{C}_{\omega,\eta})^{-1}$ in the lower half-plane exactly
match the Neumann complex resonances. Appendix~\ref{app:combined_form}
presents a corresponding discussion concerning the operator
$C_{\omega,\eta}$ (eq.~\eqref{eqn:combined_op}) associated with the
Dirichlet problem, showing that the poles of $(C_{\omega,\eta})^{-1}$
with $\eta < 0$ exactly match the complex resonances of
$\mathcal{U}(\omega)$.
 
The most physically relevant complex resonances are those near the
real axis, as they induce near-singular behavior in the integrand of
the inverse Fourier transform integral~\eqref{eqn:ukw}, making
numerical evaluation challenging. Section~\ref{sec:RS} presents an
approach that overcomes these difficulties through a singularity
subtraction technique, and it establishes a key connection of that
method with the long-time asymptotics of time-domain scattering
solutions. To enable this approach, Section~\ref{sec:AAA_and_res_alg}
introduces a novel algorithm for computing the required
``incidence-excited'' complex resonances for the Dirichlet
problem. Notably, this algorithm relies exclusively on evaluations of
$H_{\omega}^{-1}$~\eqref{eq:C_or_S} at real frequencies $\omega$.

\begin{remark}\label{rmk:simple_reso}
  \label{simple} For simplicity, this paper focuses on the generic
  case~\cite{albert1975genericity, klopp1995generic}, where complex
  resonances are simple poles of $\mathcal{U}_\omega$.
\end{remark}

\section{Incidence-excited resonances from real-frequency data}\label{sec:AAA_and_res_alg}
Well-known
methods~\cite{Beyn2012eigen,asakura2009numerical,baum2005singularity}
for the evaluation of complex resonances within a contour
$\mathcal{C}\subset \mathbb{C}$ require inversion of boundary integral
operators along $\mathcal{C}$. However, most relevant to this work are
the complex resonances which both lie near the real axis and are
excited by a given incident field. This section proposes a
Incidence-Excitation (IE) method that, relying on {\em a new real-axis
  adaptive} rational approximation strategy, obtains the relevant
incidence-excited complex resonances on the basis of inversion of the
boundary integral operators {\em at real frequencies only}.  The
excited frequencies thus obtained can then be utilized in a seamless
manner in conjunction with the FTH method to produce the solution of a
given time-domain problem.

The groundwork of the method is laid in Section~\ref{sec:AAA},
starting with a brief presentation of the AAA algorithm for rational
approximation~\cite{AAA} and relevant
variants. Section~\ref{sec:adaptiveres} then motivates the proposed
algorithm by reviewing a recently introduced adaptive
algorithm~\cite{bst} for the evaluation of real and complex
resonances. Section~\ref{sec:realleftscal} then details the proposed
IE adaptive algorithm for the evaluation of the excited resonances and
the corresponding residues,---which, as is demonstrated
Section~\ref{sec:RS}, form a basis for the evaluation of time-domain
fields at all times with minimal computational cost.

\subsection{Scalar-valued and random-sketching vector-valued rational approximation}\label{sec:AAA}
The AAA algorithm~\cite{AAA} is an efficient method for constructing
rational approximants $r^m(\omega) \approx f(\omega)$ for a
complex-variable function $f$ on the basis of the values of $f$ at an
$M$-point set $Z \subset \mathbb{C}$. The algorithm proceeds by
inductively constructing, for $m=1,2,\dots$, certain sets of ``support
points'' $Z_{m} = \{\omega_1,\ldots,\omega_{m}\}, Z_{m} \subset{Z}$
($\omega_p\ne \omega_q$ for $p\ne q$),  ``weights''
$v^{m} = \{v^{m}_1,\ldots,v^{m}_m\}\subset \mathbb{C}$, and associated
rational approximants
\begin{equation}\label{eqn:rat_approx}
    r^m(\omega) = \sum_{j=1}^m \frac{v^m_j f(\omega_j)}{\omega - \omega_j} \bigg/   \sum_{j=1}^{m} \frac{v^{m}_j}{\omega - \omega_j}.
  \end{equation}
Letting $\widetilde{Z}_0=\emptyset$, calling
$\widetilde{Z}_m \coloneq Z \setminus Z_{m}$, and using the enumeration
$\widetilde{Z}_m = \{\omega^m_1,\ldots,\omega^m_{M-m}\}$,
the inductive process proceeds by first constructing the $m$-th support point
\begin{equation*}\label{eqn:support_max}
 \omega_{m} =  \argmax_{\omega\in \widetilde{Z}_{m-1}} |r^{m-1}(\omega) - f(\omega)|,
\end{equation*}
and setting $Z_{m} = Z_{m-1} \cup\{\omega_{m}\}$. The inductive step
is then completed by obtaing the $m$-th ``weight vector'' $v^{m}$ as
the solution of the least squares problem
\begin{equation}\label{eqn:A_min}
    v^{m} =  \argmin_{v\in \mathbb{C}^m,\; \parallel v\parallel_m = 1}\parallel A_{m}[f]v\parallel_{M-m},
\end{equation}
where, for a positive integer $n$, $\parallel\cdot\parallel_n$ denotes the Euclidean norm in $\mathbb{C}^n$ and $A_{m}[f]$ denotes the $(M - m) \times m$ Loewner matrix, whose $(ij)$-th entry is given by
\begin{equation*}
  (A_{m}[f])_{ij} = \frac{f(\omega^m_i) - f(\omega_j)}{\omega^m_i - \omega_j}.
\end{equation*}
(The minimization problem~\eqref{eqn:A_min} amounts to minimizing $f$
times the denominator minus the numerator in the barycentric
formulation~\eqref{eqn:rat_approx} of the rational approximant
(see~\cite[Eq.~3.4]{AAA}).)  Using the support points in the set $Z_m$
and the weight vector $v^m$, the rational approximant $r^m$ is
constructed. The algorithm terminates when
$\max_{Z}|r^{m}(\omega) - f(\omega)|$ is less than a user-specified
error tolerance $\varepsilon_\mathrm{tol}$. In practice, the
algorithm is also terminated if $m$ exceeds a suitably chosen value
$m_\mathrm{max}$, typically set to $100$. Meeting this stopping
criterion before the user-prescribed tolerance
$\varepsilon_\mathrm{tol}$ is achieved provides an indication that $Z$
does not adequately represent $f$ or that $f$ has too many poles near
the points in $Z$. In either case the sample set should be adequately
refined as is done e.g. as part of the adaptive algorithm described in
Section~\ref{sec:adaptiveres}.

A vector AAA algorithm, which produces a vector-valued rational
approximant to a complex vector-valued function $\widehat{f}:\C \to \C^N$, was
introduced in~\cite{lietaert2022automatic}. As in the scalar case, the
vector algorithm constructs a set of support points $Z_m$ and a
vector $v^m$ of weights to produce a vector valued
rational approximant $R^m(\omega) \approx \widehat{f}(\omega)$. Analogous to
the scalar case, the vector-valued rational approximant $R^m(\omega)$ is
constructed inductively in the form of the  quotient
\begin{equation}\label{eqn:rat_approx_vec}
    R^m(\omega) = \sum_{j=1}^m \frac{v^m_j \widehat{f}(\omega_j)}{\omega - \omega_j} \bigg/   \sum_{j=1}^{m} \frac{v^m_j}{\omega - \omega_j}.
\end{equation}
 Letting $\widehat{f}_n(\omega)$, $R^m_n(\omega)$, $(1 \le n \le N)$ denote the components of $\widehat{f}$ and $R^m$ respectively, the support points $\omega_m$ at each step are computed as
\begin{equation*}
   \omega_m = \argmax_{\omega\in \widetilde{Z}_{m-1},\, 1 \le n \le N} |R_n^{m-1}(\omega) - \widehat{f}_n(\omega)|
\end{equation*} 
To compute the weights a problem of the same form as~\eqref{eqn:A_min}
is solved, namely
\begin{equation}\label{eqn:sv_ls}
     v^{m} =  \argmin_{v\in \mathbb{C}^m,\; \parallel v\parallel_m = 1}\parallel B_m[\widehat{f}]v\parallel_{N(M-m)},
\end{equation}
where $B_m[\widehat{f}]$, is a block matrix given by $B_m[\widehat{f}]= [A_m[\widehat{f}_1] \dots A_m[\widehat{f}_N]]^T$. 

The least-squares problem in the vector-valued AAA
algorithm~\eqref{eqn:sv_ls} becomes prohibitively expensive as $N$
grows, even if an efficient implementation such as suggested
in~\cite{lietaert2022automatic} is used. To address this issue a
random sketching approach was recently proposed
in~\cite{guttel2024randomized}, where for a random matrix
$V \in \C^{N\times \ell}$ with $\ell \ll N$, the vector-valued AAA
algorithm is applied to the function $\widehat{g}(\omega) = V^T\widehat{f}(\omega)$ to compute the rational approximant
\begin{equation}\label{eqn:Gell}
    R^m(\omega) = \sum_{j=1}^m \frac{v^m_j \widehat{g}(\omega_j)}{\omega - \omega_j} \bigg/   \sum_{j=1}^{m} \frac{v^m_j}{\omega - \omega_j}.
  \end{equation}
A rational approximant for $\widehat{f}$ is then constructed by substituting the values $\widehat{g}_\ell(\omega_j)$ by $\widehat{f}(\omega_j)$ in equation~\eqref{eqn:Gell}. The numerical experiments and
theoretical analysis in~\cite{guttel2024randomized} show that even for
small values of $\ell$ (e.g. $\ell = 2,4,8)$ the random sketching
rational approximation algorithm produces a highly accurate rational
approximant.

The numerical implementations presented in this paper utilize the AAA
and vector-AAA algorithms provided in~\cite{chebfun}
and~\cite{lietaert2022automatic}, respectively. For the vector-AAA algorithm we have additionally implemented a spurious pole ``clean up'' algorithm based on the prescription given in~\cite{AAA}.
\begin{remark}\label{AAA_notat_usage}
  The notations $r^m$, $R^m$ and $v^m_j$ used in this section for the
  AAA approximants and weights is useful due to the inductive nature
  of the AAA construction. However, this notation is not necessary for
  the practical use of the approximants. Therefore, in the remainder
  of the paper, we will omit it and instead express the AAA
  approximant produced by the algorithm without explicitly
  incorporating the numerator/denominator degree $m$ (except in the
  upper summation limit):
  \begin{equation}\label{eqn:rat_approx_vec_no-exp}
    R(\omega) = \sum_{j=1}^m \frac{v_j \widehat{f}(\omega_j)}{\omega - \omega_j} \bigg/   \sum_{j=1}^{m} \frac{v_j}{\omega - \omega_j}.
\end{equation}
\end{remark}

\subsection{Adaptive random-excitation (RE) resonance evaluation}\label{sec:adaptiveres}
To motivate the adaptive algorithm for evaluation of incidence-excited
resonances presented in Section~\ref{sec:realleftscal}, this section
briefly reviews the adaptive algorithm~\cite[Algorithm 2]{bst} for
evaluation of all resonances in a given domain in the complex plane,
which is based on the use of random excitations (RE). The adaptive RE
algorithm has proven effective for evaluation of complex resonances,
even in presence of large numbers of resonances and/or high
frequencies.

To evaluate the resonances associated with the frequency-domain
solution operator
$\mathcal{U}_\omega$~\eqref{eq:sol_op}--\eqref{eq:abuse} (which per
Section~\ref{sec:complex_resonances}, coincide with the poles of the
resolvent $(H_{\omega})^{-1}$), the RE method computes the poles of
the corresponding matrix-valued numerical approximation
$\widetilde{H}_\omega^{-1}$~\eqref{eq:C_or_S-num};
see~\cite[Rem. 2]{bst}. The poles of $\widetilde{H}_\omega^{-1}$, in
turn, are produced by seeking poles of the randomly scalarized
resolvent
\begin{equation*}
    s(\omega) = u^*\widetilde{H}_\omega^{-1}v \quad \text{where} \quad {u,v \in \C^N} \quad \text{are fixed random vectors}
\end{equation*}
---since, as shown in~\cite{bst}, the poles $s(\omega)$ coincide, with
probability $1$, with the poles of $\widetilde{H}_\omega^{-1}$.

To compute the poles of $s(\omega)$---that is, the numerical
approximations of the resonances of $\mathcal{U}_\omega$---lying
within a set $D\subset \mathbb{C}$, including its boundary, the RE
method applies the AAA algorithm to construct a rational approximant
$r(\omega)\approx s(\omega)$ from samples along the boundary of
$D$. This approximant is useful in that, provided the set $D$ is
``sufficiently small'' and an adequate number of roughly equispaced
sampling points are used along the boundary of $D$~\cite[Rem. 4]{bst},
the poles of $r(\omega)$ within $D$ provide close approximations of
the poles of $s(\omega)$. In order to tackle the generic case in which
a proposed set $D$ may not be sufficiently small, the RE method
employs an adaptive search technique by partitioning $D$ into
sub-regions and computing the poles within each subregion, typically
using rectangular domains $D$ which are subsequently dyadically
partitioned, in an iterative fashion, into smaller rectangular
subregions---as detailed in~\cite{bst}. The algorithm terminates when
no new poles are found in each subregion, upon which a certain secant
method-based termination stage is used to significantly enhance
accuracy and to filter out spurious poles that may (rarely) be
produced by the AAA algorithm.

\subsection{Incidence-excitation  (IE) resonance evaluation for time-domain problems}\label{sec:realleftscal}

The complex resonances most relevant to the FTH method reviewed in
Section~\ref{sec:abl_rev} are those whose real parts lie within the
incident-field interval $I = I(W_1,W_2)$~\eqref{eq:fr_int}, which are
located near the real axis, and whose residue is not numerically
insignificant. Indeed, such complex resonances lead to sharp spikes in
$U_k^\mathrm{slow}(\br,\omega)$ along the real frequency axis, as
illustrated by the numerical experiments in
Section~\ref{sec:SingSubEx}, and, therefore, the accurate evaluation
of the Fourier transform~\eqref{eqn:ukw} via the FTH high-frequency
integration method often requires the use of extremely fine meshes. In
order to avoid this difficulty a certain complex resonance singularity
subtraction technique is proposed in Section~\ref{sec:RS}, which, in
particular requires as input the positions and residues of all
relevant near-real complex resonances. This section presents the
``Incident Excitation'' (IE) algorithm which, in contrast with the RE
algorithm presented in the previous section (which obtains complex
resonances $\omega$ within a given region in the complex plane on the
basis of the scalarization of the resolvent
$\widetilde{H}_\omega^{-1}$ via pre- and post-multiplication by a pair
of {\em random} vectors at a number of frequencies $\omega$ in the
{\em complex} plane) aims to compute all complex resonances
responsible for the spikes in $U_k^\mathrm{slow}(\br,\omega)$ on the
sole basis of the action of the discrete version
$\widetilde{H}_\omega^{-1}$ of the resolvent $H_\omega^{-1}$ on {\em
  incident field data}~\eqref{eq:BP}
(eqs. \eqref{eq:C_or_S-num}-\eqref{disc_vrsn}) at {\em real}
frequencies $\omega$---where either $H_\omega = C_{\omega,\eta}$ or
$H_\omega = S_\omega$ (equations~\eqref{eqn:combined_bie}
and~\eqref{eqn:open_bie-sqrt}), as applicable.

\begin{remark}\label{rmk:IEmethod}
  Unlike other methods for evaluating complex resonances—such as the
  RE method reviewed in Section~\ref{sec:adaptiveres}, the contour
  integration
  methods~\cite{Beyn2012eigen,misawa2017boundary,asakura2009numerical},
  and the root-finding methods~\cite{guttel2017nonlinear}---the IE
  approach introduced in this section identifies near-real complex
  resonances as the frequency poles of the integral density
  $\psi_\bp(\br',\omega) = \left(H_\omega^{-1}B_\bp(\cdot,\omega)\right)(\br',\omega)$ associated with the incident field
  $B_\bp$. A byproduct of this procedure is the construction of
  rational approximants~\eqref{eqn:Rat_approx_set} for the density
  functions $\psi_\bp(\br',\omega)$ themselves. These approximants can
  then be reused to inexpensively obtain, without further resolvent
  evaluations: i) The residues of the field at a spatial point $\br$
  (as is done in Section~\ref{sec:res_comp}), which are required for
  the subtraction procedure and for the large-time evaluation of polar
  contributions (see Section~\ref{sec:RS}); and ii) The density values
  at arbitrary sets of frequencies, such as equispaced frequency sets
  which in Section~~\ref{sec:Num_Imp} provide the regularized
  frequency-domain data needed in the high-frequency integration step
  of the FTH method. This reuse of rational approximants is a key
  element of the overall FTH-SS methodology, as it substantially
  reduces the number of costly resolvent evaluations required by the
  algorithm.
\end{remark} 

To introduce the IE algorithm we first observe that, in view
of~\eqref{eq:psi_p_sol},~\eqref{eq:Up} and~\eqref{eq:ukslow} together
with \eqref{eqn:combined_rep} or~\eqref{eqn:slp_rep_freq_arc}, as
applicable, the complex resonances of
$U_k^{\mathrm{slow}}(\br,\omega)$ coincide with the complex poles of
$U_\bp(\br,\omega)$, and thus, with the complex poles of
$\psi_\bp(\br',\omega) =
\left(H_\omega^{-1}B_\bp(\cdot,\omega)\right)(\br,\omega)$ (which
therefore are, in particular, independent of both $\br$ and $k$). The
IE algorithm thus seeks to evaluate all complex poles of the
incidence-excited resolvent (IE)
\begin{equation}
  \label{eq:inc_exc}
  \left(H_\omega^{-1}B_\bp(\cdot,\omega)\right)(\br,\omega)
\end{equation}
in the box
\begin{equation}
  \label{eq:box}
  \mathcal{M}^I_h \coloneqq \{\omega \in \C \ | \ \Re(\omega) \in I,
  \Im(\omega) \in [-h,0]\}\quad\mbox{for some prescribed parameter}\quad h > 0,
\end{equation}
and associated residues. Using the enumerations
\begin{equation}
  \label{eq:nih}
  \sigma_n,\quad 1\leq n\leq N_h^{I},\qquad\mbox{and}\qquad \rho_n,\quad
  1 \le n \le N_h^{I,\mathrm{e}},
\end{equation}
which list all poles $\sigma_n$ of the Helmholtz solution
operator~\eqref{eq:sol_op} contained in the box $\mathcal{M}^I_h$, and
all poles $\rho_n$ in the same box obtained through the
Incidence–Excitation algorithm introduced in Section~\ref{adaptive},
we denote the corresponding sets of resonances by
\begin{equation}\label{eqn:inc_res}
  P^I_h = \{\sigma_1,\dots,\sigma_{N_h^{I}}\}\subset \mathcal{M}^I_h \qquad \text{and}\qquad P_{h}^{I,\mathrm{e}} = \{\rho_1,\dots,\rho_{N_h^{I,\mathrm{e}}}\}\subset \mathcal{M}^I_h.
\end{equation}
In view of~\eqref{eq:Up}, the corresponding residues, which are
denoted $c_{\bp,n}(\br)$ ($1\leq n\leq N_h^{I}$) and
$d_{\bp,n}(\br)$ ($1\leq n\leq N_h^{I,\mathrm{e}}$), respectively, are given by
\begin{equation}\label{eqn:res}
   c_{\bp,n}(\br) = \frac{1}{2\pi i}\int_{C_n} U_\bp(\br,\omega)d\omega,\quad 1\leq n\leq N_h^{I} \quad\text{and}\quad d_{\bp,n}(\br) = \frac{1}{2\pi i}\int_{C_n^\mathrm{e}} \widetilde{U}_\bp(\br,\omega)d\omega\quad 1\leq n\leq N_h^{I,\mathrm{e}},
\end{equation}
where $C_n$ (resp. $C_n^\mathrm{e}$) denotes a contour enclosing
$\sigma_n$ (resp.  $\rho_n$) but no other poles $\sigma_j$, $j\ne n$
(resp. no other poles $\rho_j$, $j\ne n$). For readability the
dependence of $c_{\bp,n}$ and $d_{\bp,n}$ on $I$ and $h$ is not made
explicit in the notation used.
\begin{remark}\label{rmk:phIe_is_numer}
  The RE algorithm described in Section~\ref{sec:adaptiveres} produces
  a numerical approximation of the set $P^I_h$ of complex resonances
  contained in $\mathcal{M}^I_h$. As noted in that section, owing in
  part to its adaptive character, that method generically captures all
  the singularities within $\mathcal{M}^I_h$ subject to the error
  tolerance associated with the numerical approximations used.  In
  contrast to $P^I_h$, the set $P_{h}^{I,\mathrm{e}}$ of
  incidence-excited poles is a numerical construct that is not defined
  independently of the algorithm used to compute it. Although this
  dependence is not explicitly reflected in the notation,
  $P_{h}^{I,\mathrm{e}}$ is determined by the choice of algorithm, the
  prescribed error tolerance, and the specified incident field. The
  algorithm proposed in this paper for producing the set
  $P_{h}^{I,\mathrm{e}}$, which also utilizes the AAA method
  adaptively, is presented in Section~\ref{adaptive}. As discussed
  below, the contribution to the field $U(\br,\omega)$ by a pole
  $\sigma_n$ is a quantity of the order of the quotient of the
  ($\mathbf{r}'$-dependent) residue of the integral equation
  density~\eqref{dens_resid} and the distance of the pole to the real
  frequency axis. Our numerical experiments indicate that the set
  $P_{h}^{I,\mathrm{e}}$ produced by the proposed IE algorithm
  generally coincides with the subset of $P^I_h$ for which the norm
  (in $L^2(\Gamma)$) of the residue of the integral equation density
  \emph{resulting from the given incident field} exceeds a value of
  the order of the associated AAA tolerance
  $\varepsilon_\mathrm{tol}$.  Thus, the proposed IE algorithm (which,
  in particular, incorporates a version of the AAA algorithm that
  includes the ``cleanup'' procedure described in~\cite[Sec. 5]{AAA}),
  ``disregards'' poles with negligible residues. As indicated
  in~\cite{AAA}, the cleanup procedure generically discards poles
  whose associated residues fall below the numerical tolerance
  $\varepsilon_\mathrm{tol}$ that is also used as part of the
  termination criterion for the AAA algorithm
  (cf. Section~\ref{sec:AAA}).
  \end{remark}

 It is important to note that, in view of~\eqref{eq:psi_p_sol}
and~\eqref{eq:Up}, the residues $c_{\bp,n}(\br)$ may be expressed in
terms of the density residues
\begin{equation}\label{dens_resid}
    \widehat{c}_{\bp,n}(\br') = \frac{1}{2\pi \mathrm{i}}\int_{C_n} \psi_\bp(\br',\omega) d\omega, \quad \br' \in \Gamma.
\end{equation}
Indeed, on account of Remark~\ref{rmk:simple_reso} we have
$\widehat{c}_{\bp,n}(\br) = \lim_{\omega \to \sigma_n} (\omega - \sigma_n)
\psi_\bp(\br,\omega)$. Thus, using the single-layer and combined-field
field representations $\mathcal{S}_\omega$ and
$\mathcal{C}_{\omega, \eta}$ (equations~\eqref{eqn:slp_rep_freq_arc}
and~\eqref{eqn:combined_rep}, respectively) with $\omega = \sigma_n$,
together with the dominated convergence theorem, we obtain
\begin{equation}\label{res_alt}
  c_{\bp,n}(\br) = \mathcal{S}^\mathrm{arc}_{\sigma_n}[\widehat{c}_{\bp,n}](\br) \quad \mbox{or} \quad c_{\bp,n}(\br) = \mathcal{C}_{\sigma_n,\eta}[\widehat{c}_{\bp,n}](\br) \quad \br \in \Omega^\mathrm{ext},\quad\mbox{as applicable}
\end{equation}
in the open and closed curve cases, respectively.  Utilizing the
Cauchy–Schwarz inequality we then obtain
\begin{equation}\label{eqn:spat_res_dens_bound}
  |c_{\bp,n}(\br)| \le M_\mathcal{S}(\br) \|\widehat{c}_{\bp, n}\|_{L^2(\Gamma)} \quad \mbox{and} \quad |c_{\bp,n}(\br)| \le  M_\mathcal{C}(\br) \|\widehat{c}_{\bp, n}\|_{L^2(\Gamma)}
\end{equation}
where, letting $\|\cdot\|_{L^2(\Gamma)}$ denote the $L^2$ norm on the
curve $\Gamma$, we have set
\[
  M_\mathcal{S}(\br) =
  \|G_{\sigma_n}(\br,\cdot)\|_{L^2(\Gamma)}\quad\mbox{and}\quad
  M_\mathcal{C}(\br) =\| \frac{\partial G_{\sigma_n}(\br,\cdot)}{\partial
    n} - \mathrm{i} \eta G_{\sigma_n}(\br,\cdot)\|_{L^2(\Gamma)}.
\]
This tells us that, for $\omega \in I$, the contributions
$c_{\bp, n}/(\omega - \sigma_n)$ to the field $U(\br,\omega)$ which
result from a given $\br$-dependent resonance
$(\sigma_n, c_{\bp,n}(\br))$ are negligibly small---of the order
$\varepsilon_\mathrm{tol}$,
cf. Remark~\ref{rmk:phIe_is_numer}---provided that the corresponding
$\br$-independent resonance $(\sigma_n, \widehat{c}_{\bp,n})$ itself
produces $\psi_\bp(\br,\omega)$ contributions
$\widehat{c}_{\bp,n}/(\omega - \sigma_n)$ of order
$\varepsilon_\mathrm{tol}$ for all $\omega \in I$.  Briefly, then, the
complex resonances $\sigma_n$ which cause spikes in
$U_k^\mathrm{slow}(\br,\omega)$ are the complex resonances near the
real axis whose density residue $\widehat{c}_{\bp,n}(\br')$ is not
negligible for $\br' \in \Gamma$. In order to produce all such
resonances the proposed IE method resorts to computing a rational
approximant to $\psi_\bp(\br',\omega)$ by means of the random
sketching algorithm described in Section~\ref{sec:AAA}. In practice,
however, a direct application of the random sketching
rational approximation approach to the
entire interval $I$ generally fails to approximate all relevant
resonance pairs $(\sigma_n, \widehat{c}_{\bp,n})$. In order to tackle
this difficulty an adaptive approach (analogous to but different from
the one utilized by RE method) is proposed in Section~\ref{adaptive}
for the evaluation of the relevant pole locations, producing the set
$P^{I,\mathrm{e}}_h$.  Section~\ref{sec:res_comp} then describes a
method to compute the corresponding residues
$d_{\bp, n}$~\eqref{eqn:res} by re-using the rational approximants to
$\psi_\bp(\br,\omega)$ that are generated as part of by the adaptive
IE-based pole-search algorithm.

\vspace{0.2cm}
\begin{algorithm}[H]
    \SetKwFunction{RealLineAdaptive}{reallineadaptive}
    \SetKwProg{Fn}{Function}{}{end}
    \DontPrintSemicolon 
    \KwIn{An interval $I = [W_1,W_2]$, a number $J$ of frequencies to use in each interval, AAA stopping parameters $m_\mathrm{max}$ and $\varepsilon_\mathrm{tol}$, and singularity-box depth $h>0$.}
    \Fn{\RealLineAdaptive{$I$, $J$}}{
        Evaluate $\widetilde{\psi}_p$~\eqref{eq:F-psi} at $J$ equally spaced frequencies in the interval $I$.\\
        Compute a rational approximant $R(\omega)$ of the incidence-excited resolvent~\eqref{eq:tilde_inc_ex} evaluated by the random sketching
rational approximation  algorithm applied to the values $\widetilde{\psi}_p$ computed at the previous step.\\
        \eIf{\text{the random sketching
rational approximation error converged within} $\varepsilon_\mathrm{tol}$}{
        Set $R^{I}(\omega) = R(\omega)$ and compute $P_{h}^{I,\mathrm{e}}$.\\
        \Return $R^{I}(\omega), P_{h}^{I,\mathrm{e}}$.\\
        }{
        Compute the midpoint $W_3 = (W_2 + W_1) /2$, and set $I_\mathrm{left} = [W_1,W_3]$ and $I_\mathrm{right} = [W_3,W_2]$.\\
        \Return \RealLineAdaptive{$I_\mathrm{left}$, $J$} and \RealLineAdaptive{$I_\mathrm{right}$, $J$}
        }
    }
\caption{Adaptive Incidence-Excitation Resonance Evaluation} \label{alg:real-line-adaptive}
\end{algorithm}
\vspace{0.2cm}

\subsubsection{Adaptive IE resonance evaluation\label{adaptive}}
For a given incident field whose frequency content vanishes outside an
interval $I = I(W_1,W_2)$ (so that, e.g., in the context set up in
Section~\ref{sec:abl_rev}, $B(\omega)$ either vanishes or is
negligible for $\omega\not\in I$) the adaptive IE method computes the
set $P_{h}^{I,\mathrm{e}}$~\eqref{eqn:inc_res} of IE complex
resonances in the box~\eqref{eq:box}, for some prescribed parameter
$h > 0$ (that may be selected as discussed below in this section). The
IE method accomplishes this on the basis of the discrete version
\begin{equation}
  \label{eq:F-psi}
  \widehat{f}(\omega) = (\widehat{f}_1(\omega),\dots,\widehat{f}_N(\omega)) =
  (\widetilde\psi_{\bp,1}(\omega),\dots,\widetilde\psi_{\bp,N}(\omega)) = \widetilde\psi_{\bp}(\omega)
\end{equation}
(equation~\eqref{disc_vrsn}) of the density solutions
$\psi_{\mathrm{\bp}}(\cdot,\omega) =
\left(H_{\omega}\right)^{-1}B_\bp(\cdot,\omega)$, with $\omega$ in a
discrete set of (generally non-equispaced) adaptively-selected
frequencies within the interval $I$, as described in what follows. (The
set $P_h^{I,\mathrm{e}}$ then serves as the input to the singularity subtraction
algorithm introduced in Section~\ref{sec:RS}.)

In detail, starting with a set
$\mathcal{F} = \{\omega_1, \dots, \omega_J\}$ of $J$ equispaced
frequencies in the interval $I$, the IE algorithm first seeks to
compute a vector-valued rational approximant $R(\omega)= R^m(\omega)$
of the vector
\begin{equation}
\label{eq:tilde_inc_ex}
\mbox{$\widetilde{H}_{\omega}^{-1}\widetilde{B}_\bp(\omega)$, on the basis of its values for $\omega \in \mathcal{F}$,}
\end{equation}
by applying the random sketching
rational approximation method for vector-valued functions of
$\omega$ which is described in Section~\ref{sec:AAA}.  (Per
Remark~\ref{AAA_notat_usage} the superindex $m$ in the notation for
the rational approximant $R = R^m$ is suppressed here and in what
follows.) If the random sketching
rational approximation method converges within the
prescribed error tolerance $\varepsilon_\mathrm{tol}$ for some value
of $m \leq m_\mathrm{max}$, then the algorithm is completed in its
$L=1$ step by setting $L=1$, $I_L = I_1 =I$,
$R^{I_L} = R^{I_1} = R$, and by producing the set
$P_{h}^{I,\mathrm{e}}= P_{h}^{I_1,\mathrm{e}}$ that comprises all
poles of $R^{I_1}(\omega)$ contained in
$\mathcal{M}_h^{I_1}$~\eqref{eq:box}. Otherwise the interval $I$ is
divided at the midpoint $W_3 = (W_2 + W_1) /2$ into two subintervals
$I_\mathrm{left} = [W_1, W_3]$ and $I_\mathrm{right} = [W_3, W_2]$,
and the same procedure is recursively applied to the intervals
$I_\mathrm{left}$ and $I_\mathrm{right}$, using a set of $J$
equispaced frequencies in each case. Each time the random sketching
rational approximation
method converges within the error tolerance $\varepsilon_\mathrm{tol}$
for some value of $m \leq m_\mathrm{max}$ and for some subinterval
$I_L\subset I$, the index $L$ is increased by 1, and upon each such
completion, the corresponding interval $I_L$, rational approximant
$R^{I_L} $, and set $P_{h}^{I_L,\mathrm{e}}$ of poles of $R^{I_L} $ in
the box $\mathcal{M}_h^{I_L}$ are recorded (and the poles of
$R^{I_L} $ outside $\mathcal{M}_h^{I_L}$ are discarded). The processes
terminates when the random sketching
rational approximation method has converged, within the
tolerance $\varepsilon_\mathrm{tol}$ and for some value of
$m \leq m_\mathrm{max}$, on every subinterval in a partition
$I_1,\dots,I_L$ of the interval $I$:
\begin{equation}
  \label{eq:partit}
I=\bigcup_{\ell =1}^L I_\ell.
\end{equation}
Thus, upon completion, the IE algorithm results in a partition
$I_1,\dots,I_L$ of $I$, a set of rational approximants
\begin{equation}\label{eqn:Rat_approx_set}
    R^{I_1},\dots, R^{I_L}
\end{equation}
applicable over the
corresponding sets $\mathcal{M}^{I_1}_h,\dots, \mathcal{M}^{I_L}_h$,
respectively, and associated sets of poles
$P_{h}^{I_1,\mathrm{e}}\subset \mathcal{M}^{I_1}_h,\dots,P_{h}^{I_L,\mathrm{e}}\subset
\mathcal{M}^{I_L}_h$. The set of all poles~\eqref{eqn:inc_res} thus obtained, in particular, is given by
\begin{equation}
  \label{eq:all_poles}
  P_{h}^{I,\mathrm{e}} = \bigcup_{\ell=1}^L P_{h}^{I_\ell,\mathrm{e}}\subset \mathcal{M}^{I}_h,\quad\text{where}\quad I = I(W_1,W_2)\quad \text{is given by~\eqref{eq:fr_int}}.
\end{equation}
A pseudocode description of the method is presented in Algorithm~\ref{alg:real-line-adaptive}.

On the basis of a broad set of numerical experiments we have found
that using the parameters $m_\mathrm{max} = 100$, AAA tolerance
$\varepsilon_\mathrm{tol} = 10^{-10}$, with $J \in [200,400]$ in each
relevant interval, Algorithm~\ref{alg:real-line-adaptive} is effective
at capturing all resonances with real part in the interval $I$ that
are relevant in the context of singularity-subtraction method. (For
computational efficiency, values of~\eqref{eq:tilde_inc_ex} computed
on each level of the adaptivity recursion can be stored and used in
subsequent levels.) The parameter $h$ should be selected so as to lead
to rapid convergence of the singularity subtraction-based FTH
high-frequency Fourier transform algorithm introduced in
Section~\ref{sec:RS}. Across a wide range of experiments involving
frequency intervals of the type considered in this paper, it has been
found that selecting $0.2\leq h \leq 0.5$ results in a set
$P_{h}^{I,\mathrm{e}}$ that leads to smooth singularity-subtracted
frequency dependence, and which prevents the inclusion of spurious
poles that lie far from the real axis and which, while increasing the
computational cost, have little effect on the convergence rate of the
FTH integration.
Additionally, in situations for which only a low
number of integral equation inversions are to be used, (say less than
$200$), we have found that it is preferable to use
Algorithm~\ref{alg:real-line-adaptive} without adaptive steps
(resulting in a rational approximant with degree
$ m \leq m_\mathrm{max} = J/2$ (cf.~\cite[Sec. 3]{AAA})), rather than
using a smaller value of $J$ (such as e.g. $J = 20$) and applying the
full adaptive algorithm with a large value of the tolerance
$\varepsilon_\mathrm{tol}$. Section~\ref{sec:adaptivealgdemo}
demonstrates the effectiveness of the overall algorithm, and it
analyzes the significance---or lack thereof---of any resonances not
captured by the method for a given value of $h$.

\begin{remark}\label{rmk:Interval_Selection_Alg}
  The application of Algorithm~\ref{alg:real-line-adaptive} can become
  expensive in the presence of a large number of IE resonances with
  real parts in the interval $I$---since, in such cases, many adaptive
  steps may be required. To address this difficulty, a modified
  version of the algorithm could be considered in which the search
  for IE resonances is initiated over a collection of subintervals
  forming a partition of the interval $I$. Since the number of
  relevant complex resonances in any given subinterval of $I$ is not
  known a priori, however, a more general ``sampling'' algorithm has
  been devised in which the interval $I$ is {\em sampled} by means of
  a small number of small subintervals on which
  Algorithm~\ref{alg:real-line-adaptive} can be cheaply applied, and
  thus produce useful estimates of the variation of the density of IE
  resonances with real part in the interval $I$. Using such estimates,
  a partition~\eqref{eq:partit} of adequately varying sizes may be
  produced such that the application of
  Algorithm~\ref{alg:real-line-adaptive} on $I_j$ with
  $m_\mathrm{max} = 100$ produces all the resonances relevant to
  $I_j$. A complete exploration and demonstration of the sampling
  algorithm is beyond the scope of this paper, however, and is left
  for future work.
\end{remark}

\subsubsection{Computation of the residues $d_{\bp,n}$~\eqref{eqn:res}}\label{sec:res_comp}
Once the set of IE complex resonances~\eqref{eq:all_poles} has been
identified, the associated spatially dependent residues
$d_{\bp,n}(\br)$~\eqref{eqn:res} ($\mathbf{r}\in\Omega^\mathrm{ext}$)
can be computed using the residue vector
$\widehat{d}_{\bp,n} = (\widehat{d}_{\bp,n,1},\dots\widehat{d}_{\bp,n,N}) \in \C^N$,
\begin{equation}\label{eqn:num_dens_res}
  \widehat{d}_{\bp,n,i} = \int_{C_n^\mathrm{e}} \widetilde{\psi}_{\bp,i}(\omega)\, d\omega, \quad \text{with} \quad \widetilde{\psi}_{\bp,i}(\omega),\quad 1\leq i\leq N, \quad \text{given by~\eqref{disc_vrsn}},
\end{equation}
of the numerical density $\widetilde{\psi}_{\bp,i}(\omega)$ at the
point $\mathbf{r}_i\in\Gamma$, where $C_n^\mathrm{e}$ is defined as
in~\eqref{eqn:res}.  The spatially dependent residue is then computed
using the discretized operators~\eqref{eqn:disc_prop_ops} via the
relations
\begin{equation*}
 d_{\bp,n}(\br) = \mathcal{S}^{\mathrm{arc},N}_{\rho_n}[\widehat{d}_{\bp,n}](\br) \quad \mbox{or} \quad d_{\bp,n}(\br) = \mathcal{C}_{\rho_n,\eta}^N[\widehat{d}_{\bp,n}](\br) \quad \br \in \Omega^\mathrm{ext},\quad\mbox{as applicable}
\end{equation*}
(cf.~\eqref{res_alt}). Evaluating $\widehat{d}_{\bp,n}(\br')$ via
contour integration requires knowledge of the density solution
$\widetilde{\psi}_\bp(\omega) =
(\widetilde{\psi}_{\bp,1}(\omega),\dots,\widetilde{\psi}_{\bp,N}(\omega))\in
\C^N$ in~\eqref{disc_vrsn} for a sufficient number of values of
$\omega\in C_n^\mathrm{e}$. (In all of the numerical examples
presented in this paper, the contour $C_n^\mathrm{e}$ was chosen as
the circle centered at $\rho_n$ of radius $10^{-5}$, and the necessary
integrals around $C_n^\mathrm{e}$ were evaluated using the trapezoidal
rule applied to~\eqref{dens_resid} with a number $J_C = 10$ of
integration points $\omega_1,\dots,\omega_{J_C}$.)  To avoid the
costly inversion of the boundary integral operators for frequency
points $\omega_j\in C_n^\mathrm{e}$, $1\leq j\leq J_C$, and for
$1\leq n\leq N_h^{I,\mathrm{e}}$, the densities
$\widetilde{\psi}_\bp(\omega_j)$ are obtained from the rational
approximants produced as part of the IE algorithm’s resonance
evaluation---specifically, for $\rho_n \in P_{h,e}^{I_\ell}$, the
corresponding IE rational approximant
$R^{I_\ell}$~\eqref{eqn:Rat_approx_set} is used. Thus, for
$\rho_n \in P_{h,e}^{I_\ell}$ and for each $1 \leq j \leq J_C$, an
accurate numerical approximation of $\psi_\bp(\br',\omega_j)$ is
obtained by exploiting the relation
\begin{equation}\label{psi-R}
\widetilde{\psi}_\bp(\omega_j) \approx R^{I_\ell}(\omega_j),
\end{equation}
where $R^{I_\ell}(\omega_j)$ denotes the $N$-dimensional vector
rational approximant~\eqref{eqn:rat_approx_vec_no-exp} of
$\widetilde{\psi}_\bp(\omega_j)$. 

\section{Frequency-domain singularity subtraction}\label{sec:RS}
The FTH method reviewed in Section~\ref{sec:abl_rev} enables
essentially dispersion-free simulation of~\eqref{eqn:wave} over
arbitrarily long times, but it encounters difficulties when complex
resonances lie close to the real axis---a situation that typically
occurs for strongly trapping scattering obstacles.  In such cases, the
integrand of the inverse Fourier transform~\eqref{eqn:ukw} becomes
nearly singular, leading to slow convergence of the Fourier transform
method described in Section~\ref{sec:inv_four_tran} as the number $J$
of quadrature points is increased---as would occur with any
Fourier-based approach that does not explicitly account for such
near-singularities. This difficulty is particularly pronounced in the
presence of strongly trapping obstacles under wideband or
high-frequency regimes, where hundreds or even thousands of resonance
poles may cluster near the real-frequency integration path.

To address this issue, Section~\ref{sec:SingSubIntrod} introduces a
complex resonance subtraction method that decomposes the near-singular
integrand into two components: a smooth, singularity-subtracted term
$U_{\bp,h}^\mathrm{s}(\br,\omega)$, which is free of sharp
frequency-domain features, and a second term which equals the sum of a
finite number of isolated polar singularities. The resulting Fourier integrals
over the frequency interval~\eqref{eq:fr_int} are denoted by
$\mathcal{I}_1(\br,t)$ and $\mathcal{I}_2(\br,t)$, respectively.

Section~\ref{sec:I2eval} then highlights the straightforward
evaluation of $\mathcal{I}_1(\br,t)$ via the quadrature rule described
in Section~\ref{sec:inv_four_tran} and it presents an efficient method
for computing $\mathcal{I}_2(\br,t)$ with a computational cost per
time-evaluation point that remains uniformly bounded for arbitrarily
long times. This is achieved by leveraging an asymptotic expansion of
the quantity $\mathcal{I}_2(\br,t)$ constructed on the basis of the
associated complex resonances.  Finally, Section~\ref{sec:SEM}
concerns the singularity expansion of the full scattered field
$u(\br,t)$: it provides a non-rigorous but reasonable rationale for the
suggestion made in Section~\ref{intro} that such expansions remain
valid even for scattering problems involving obstacles with arbitrary
trapping characteristics.

\begin{remark}\label{rmk:using_true_pols}
  The singularity-subtraction (SS) method is introduced below on the
  basis of the exact resonances $\sigma_n$~\eqref{eqn:inc_res} and the
  corresponding residues $c_{\bp,n}(\br)$~\eqref{eqn:res} in the
  region $\mathcal{M}_h^I$~\eqref{eq:box}.  For a given incident
  field, however, a modified version of the SS approach can be
  employed, in which the exact resonances and residues are replaced by
  the incidence-excited resonances $\rho_n$ and associated residues
  $d_{\bp,n}$ (see Section~\ref{sec:realleftscal}) corresponding to
  the same incident field.  This modified formulation still yields a
  decomposition into a smooth component
  $U_{\bp,h}^{\mathrm{s,e}}(\br,\omega)$---free of sharp
  frequency-domain variations---and a second component that captures
  the sharp polar contributions.  This fact has been consistently
  observed across a wide range of test cases (see, e.g.,
  Section~\ref{sec:SingSubEx} and, in particular,
  Figure~\ref{fig:subtraction_regularizing_example}).  Although the
  incidence-excited approach does not capture all of the resonances
  $\rho_n$ located within the region $\mathcal{M}_h^I$, it
  successfully identifies all resonances responsible for the spikes in
  the frequency-domain solution along the real frequency axis; see
  also Remark~\ref{rmk:phIe_is_numer}.
\end{remark}

\subsection{Singularity subtraction and the integrals
  $\mathcal{I}_{1}(\br,t)$ and
  $\mathcal{I}_{2}(\br,t)$}\label{sec:SingSubIntrod}
In order to improve the convergence of the numerical inverse Fourier
transform~\eqref{eqn:ukw} in the presence of complex resonances near
the real axis, the singularity-subtraction method utilizes the resonances $\sigma_n$~\eqref{eqn:inc_res} and corresponding residues $c_{\bp,n}(\br)$~\eqref{eqn:res}. Since, per equations~\eqref{wind_uk}
and~\eqref{eqn:ukw}, $u_k(\br,t)$ is produced from
$U^\mathrm{slow}_k(\br,\omega)$, in view of~\eqref{eq:ukslow} we see
that the poles $\sigma_n$ of $U_{\bp}(\br,\omega)$ and corresponding residues
account for all of the near singular behavior in the integrand
of~\eqref{eqn:ukw}.

In detail, with reference to equations~\eqref{eqn:inc_res} and~\eqref{eqn:res}, and with $I$ as defined in~\eqref{eq:fr_int}, we use all complex resonances
$\sigma_n \in P_h^{I}$ for $1 \le n \le N_h^{I}$, together with their
corresponding residues $c_{\bp,n}(\br)$, to define the regularized,
singularity subtracted function $U_{\bp,h}^\mathrm{s}(\br,\omega)$ by
\begin{equation}\label{eqn:ghs}
  U_{\bp,h}^\mathrm{s}(\br,\omega) = U_\bp(\br,\omega) - \sum_{n=1}^{N_h^{I}}\frac{c_{\bp,n}(\br)}{\omega - \sigma_n}.
\end{equation}
In view of~\eqref{eq:ukslow}, the singularity-subtracted Fourier
transform~\eqref{eqn:ukw} is then defined by
\begin{equation}\label{eqn:sing_sub_smooth}
    \mathcal{I}_{1,k}(\br,t) = \frac{1}{2\pi} \int_{W_1}^{W_2}  A_k^{\mathrm{slow}}(\omega)U_{\bp,h}^\mathrm{s}(\br,\omega)e^{-\mathrm{i}\omega (t - s_k)}d\omega,
\end{equation}
and, thus, letting the ``singularity integrals'' be given by
\begin{equation}\label{eqn:sing_sub_sing}
    \mathcal{I}_{2,k}(\br,t)  =  \frac{1}{2\pi} \sum_{n=1}^{N_h^{I}} c_{\bp,n}(\br) \int_{W_1}^{W_2}  \frac{A_k^{\mathrm{slow}}(\omega)}{\omega - \sigma_n}e^{-\mathrm{i}\omega (t - s_k)}d\omega,
\end{equation}
we re-express~\eqref{eqn:ukw} in the form
\begin{equation}~\label{eqn:sing_sub}
u^{I}_k(\br,t) =  \mathcal{I}_{1,k}(\br,t)  + \mathcal{I}_{2,k}(\br,t).
\end{equation}
Therefore, the field $u^{I}(\br,t)$ in~\eqref{eqn:uab} is given by
\begin{equation}\label{eqn:sing_sub_nok}
u^{I}(\br,t) =  \mathcal{I}_{1}(\br,t)  + \mathcal{I}_{2}(\br,t).
\end{equation}
where
\begin{equation}\label{eqn:I1I2tok}
  \mathcal{I}_{1}(\br,t) = \sum_{k=1}^K \mathcal{I}_{1,k}(\br,t) \quad \text{and} \quad \mathcal{I}_{2}(\br,t) = \sum_{k=1}^K \mathcal{I}_{2,k}(\br,t).
\end{equation}
Utilizing~\eqref{eqn:BandBk} and~\eqref{eqn:bkslow}, further, we obtain
\begin{equation}\label{eqn:I2}
  \mathcal{I}_{2}(\br,t) = \frac{1}{2\pi} \sum_{n=1}^{N_h^{I}} c_{\bp,n}(\br) \int_{W_1}^{W_2}  \frac{A(\omega)}{\omega - \sigma_n}e^{-\mathrm{i}\omega t}d\omega.
\end{equation}


\subsection{Numerical evaluation of $\mathcal{I}_{1}(\br,t)$ and numerical/asymptotic evaluation of $\mathcal{I}_{2}(\br,t)$}\label{sec:I2eval}

The quantity $\mathcal{I}_1(\br,t)$ is obtained as the sum of the
singularity-subtracted integrals
$\mathcal{I}_{1,k}(\br,t)$~\eqref{eqn:sing_sub_smooth}, and can
therefore be integrated effectively for all times using the FTH
quadrature scheme described in Section~\ref{sec:inv_four_tran}. As
noted in Section~\ref{sec:SEM}, further, $\mathcal{I}_{1,k}(\br,t)$
has consistently been observed to decay exponentially, at a faster
exponential rate than the quantity $\mathcal{I}_2(\br,t)$---as may be
expected by construction---and therefore needs only be calculated at
pre-asymptotic times. Thus, the accurate and efficient numerical
evaluation of the solution $u$ hinges upon the evaluation of the
quantity $\mathcal{I}_{2}(\br,t)$, which ultimately reduces to
computing the integrals
\begin{equation}\label{eqn:base_sing_int}
\int_{W_1}^{W_2} \frac{A(\omega)}{\omega - \sigma_n}e^{-\mathrm{i}\omega t},d\omega,
\quad 1 \le n \le N^I_h,
\end{equation}
which appear on the right-hand side of~\eqref{eqn:I2}. As shown in
what follows, these integrals can be evaluated accurately and
efficiently by combining numerical quadrature at pre-asymptotic times
(Section~\ref{pre-asym}) with an asymptotic expansion for large times
(Sections~\ref{sec:cont_def}--\ref{asym-exp}). Substituting the asymptotic expansion of~\eqref{eqn:base_sing_int} into~\eqref{eqn:I2}, yields the singularity expansion
\begin{equation}
  \label{eq:SExp}
  \mathcal{E}_h^{I}(\br,t) = -\mathrm{i}\sum_{n = 1}^{N_h^{I}} c_{\bp,n}(\br) A(\sigma_n)e^{-\mathrm{i}\sigma_nt}
\end{equation}
associated with poles in the set $P^I_h$ for the quantity
$\mathcal{I}_2(\br,t)$. The following theorem, whose proof is
presented in Section~\ref{asym-exp}, provides an estimate on the
accuracy and validity of the singularity expansion~\eqref{eq:SExp}. A
discussion concerning the two main assumptions in
Theorem~\ref{thm:sing_exp} together with the validity of the
corresponding theorem for three-dimensional obstacles is presented in
Remark~\ref{rmk:growth-assump} below.
\begin{theorem}\label{thm:sing_exp}
  Let $\mu(W_1,W_2)$, $\varepsilon(\mu(W_1,W_2))$ and $N_h^I$, be
  defined as in~\eqref{mu}, \eqref{eqn:eps_m_def} and~\eqref{eq:nih},
  respectively.  Further, assume that the relation
  \begin{equation}
    \label{eq:pole-number}
    N_h^I = O(\left(\mu(W_1,W_2)\right)^2)    
  \end{equation}
  holds, and that there exists a constant $D > 0$ such that, for any
  $W_1 < 0$ and $W_2 > 0$, the residues
  $c_{\bp,n}(\br)$~\eqref{eqn:res} of the poles contained in the set
  $\mathcal{M}_h^I$~\eqref{eq:box} associated with the interval
  $I = I(W_1,W_2)$~\eqref{eq:fr_int} satisfy
  \begin{equation}
    \label{eq:residue-size}
    |c_{\bp,n}(\br)| < D.
  \end{equation}
  Then there exists a constant $M > 0$ such that the error of the
  approximation of $\mathcal{I}_2(\br,t)$~\eqref{eqn:I2} by the
  singularity expansion $\mathcal{E}_h^{I}$~\eqref{eq:SExp} satisfies
  the bound
\begin{equation}\label{eqn:I2_Asym_err_thm}
  |\mathcal{I}_2(\br,t) - \mathcal{E}_h^{I}(\br,t)| \le M (\mu(W_1,W_2))^3 (e^{-h(t - T^\mathrm{inc})} + \varepsilon(\mu(W_1,W_2))) \quad \text{for} \quad t > T^\mathrm{inc}.
\end{equation}
\end{theorem}

\begin{proof}
The proof is provided in Section~\ref{asym-exp} using results established in Sections~\ref{sec:cont_def}--\ref{sec:C2C4}.
\end{proof}

\begin{remark}\label{rmk:growth-assump}
  The 2D bound~\eqref{eq:pole-number}, which is one of the assumptions
  in Theorem~\ref{thm:sing_exp} (and whose validity has been
  confirmed, even for highly trapping obstacles, by means of numerical
  experiments in the course of this work), is established
  in~\cite{vodev1994sharp} under certain conditions concerning the
  growth of the characteristic values of the resolvent; see
  also~\cite[Sec. 4.3]{dyatlov2019mathematical}. The corresponding 3D
  bound, namely, $N^I_h = O((\mu(W_1,W_2))^3)$, was established, for
  any obstacle, in~\cite{melrose1984polynomial}; on the basis of this
  3D result a corresponding version of Theorem~\ref{thm:sing_exp} can
  be established, with the right-hand common factor $\mu(W_1,W_2)^3$
  in equation~\eqref{eqn:I2_Asym_err_thm} replaced by
  $\mu(W_1,W_2)^4$. The hypothesis~\eqref{eq:residue-size} in
  Theorem~\ref{thm:sing_exp}, on the other hand, has been
  computationally verified in the course of this work, even in cases
  involving strongly trapping obstacles (see Section~\ref{sec:Num_Res}
  and in particular Section~\ref{sec:bounded_residues}). Theoretical
  studies establishing such uniform residue bounds for problems of scattering
  by smooth potentials can be found in~\cite{tang2000resonance}.
\end{remark}

It is important to note that, since by
definition~\eqref{eq:box}, \eqref{eqn:inc_res}, we have
$|\Im{\sigma_n}| < h$ for $\sigma_n\in\mathcal{M}_h^I$, the
exponential term on the right hand side of~\eqref{eqn:I2_Asym_err_thm}
decays at a faster exponential rate than the singularity expansion
$\mathcal{E}_h^{I}(\br,t)$. Thus in view of Theorem~\ref{thm:sing_exp}
and equation~\eqref{eqn:eps_m_def}, selecting $W_1$ and $W_2$
sufficiently large so that the second right-hand summand
in~\eqref{eqn:I2_Asym_err_thm} is smaller than a prescribed error
tolerance $\tau$ (for instance, values of $\tau$ close to machine
precision were used in all examples presented in this paper), the
bound~\eqref{eqn:I2_Asym_err_thm} implies that the singularity
expansion $\mathcal{E}_h^{I}(\br,t)$ yields an accurate large-time
approximation,
\begin{equation}\label{eqn:I2_asym_exp}
\mathcal{I}_2(\br,t) \approx \mathcal{E}_h^{I}(\br,t),
\end{equation}
with errors that are exponentially smaller than the singularity
expansion itself, until the first term on the right-hand side
of~\eqref{eqn:I2_Asym_err_thm} reaches the numerical tolerance $\tau$.

The remainder of the present section~\ref{sec:I2eval} proceeds as
follows. Section~\ref{pre-asym} describes a simple algorithm for the
numerical evaluation of~\eqref{eqn:base_sing_int} at pre-asymptotic
times. On the basis of contour deformation,
Sections~\ref{sec:cont_def}--\ref{sec:C2C4} then derive a large time
asymptotic expansion for~\eqref{eqn:base_sing_int}. Using these
elements, the proof of Theorem~\ref{thm:sing_exp} is presented in
Section~\ref{asym-exp}.

\subsubsection{Numerical evaluation of the integral~\eqref{eqn:base_sing_int} at pre-asymptotic times\label{pre-asym}}
  
It is important to note that, unlike
$A_k^{\mathrm{slow}}$~\eqref{eqn:bkslow}, the function $A$ that
appears in the expression~\eqref{eqn:base_sing_int} is generally not a
slowly oscillatory function of $\omega$. However, unlike the
integrands in~\eqref{wind_uk}, the integrands in~\eqref{eqn:I2} do not
require the solution of Helmholtz PDEs and are independent of both $k$
and $\br$. Consequently, the cost required by the direct numerical
evaluation of the corresponding integrals is significantly lower than
the cost required by~\eqref{wind_uk}. Nevertheless, the evaluation
cost for the integrals~\eqref{eqn:base_sing_int} does increase with
$t$, which motivates the development of the asymptotic methods in
Sections~\ref{sec:cont_def}--\ref{sec:C2C4}. In the pre-asymptotic
regime considered in this section, the numerical evaluation of these
integrals requires particular care due to the near singularity that
occurs for poles $\sigma_n$ close to the real axis. To handle this
near-singularity, we employ the equivalent representation
\begin{equation}\label{eqn:ss_second_int}
\int_{W_1}^{W_2} \frac{A(\omega)}{\omega - \sigma_n} e^{-\mathrm{i}\omega t} d\omega
= \int_{W_1}^{W_2} \frac{A(\omega)e^{-\mathrm{i}\omega t} - A(\sigma_n)e^{-\mathrm{i}\sigma_n t}}{\omega - \sigma_n} d\omega
+ A(\sigma_n)e^{-\mathrm{i}\sigma_n t} \int_{W_1}^{W_2} \frac{1}{\omega - \sigma_n} d\omega.
\end{equation}
The first right-hand integrand in~\eqref{eqn:ss_second_int} does not
exhibit sharp variations, regardless of the proximity of $\sigma_n$ to
the integration interval, and it can therefore be accurately
evaluated, for sufficiently small $t$, using standard quadrature
methods. In this paper, we employ the Clenshaw–Curtis
rule~\cite{waldvogel2006fast} for this purpose. The second integral,
in turn, can be evaluated analytically: it equals
$\log\frac{W_2-\sigma_n}{W_1-\sigma_n}.$

\subsubsection{Contour deformation of the integral~\eqref{eqn:base_sing_int}}\label{sec:cont_def}
For large times, the first right-hand integrand
in~\eqref{eqn:ss_second_int} becomes a highly oscillatory function of
$\omega$, rendering standard quadrature methods ineffective. In this
regime we therefore discard the
decomposition~\eqref{eqn:ss_second_int} and  employ instead a
large-time asymptotic approximation of~\eqref{eqn:base_sing_int} that
is obtained by deforming the corresponding integration contour into
the complex plane. In detail, using certain values $\delta_1 >0$ and
$\delta_2 > 0$, the deformed contour
\begin{equation}
  \label{eq:contour}
 C  = \bigcup_{j=1}^5 C_j
\end{equation}
connects the points $W_2 + 0\mathrm{i}$ and $W_1 + 0\mathrm{i}$ via a
sequence of five segments $C_j$, $j = 1, \dots, 5$. Specifically,
using the same parameter $h$ as in the IE algorithm, $C_1$ joins $W_2$
to $W_2 + \delta_1$; $C_2$ connects $W_2 + \delta_1$ to
$W_2 + \delta_1 - (h + \delta_2)\mathrm{i}$; $C_3$ connects
$W_2 + \delta_1 -(h + \delta_2)\mathrm{i}$ to
$W_1 - \delta_1 - (h + \delta_2)\mathrm{i}$; $C_4$ proceeds from
$W_1 - \delta_1 - (h + \delta_2)\mathrm{i}$ to $W_1 - \delta_1$; and
finally, $C_5$ connects $W_1 - \delta_1$ to $W_1$.  Taking into
account that, in view of~\eqref{eq:all_poles} we have
$\sigma_n \in \mathcal{M}^I_h$, the values of the parameters $\delta_1$
and $\delta_2$ should be selected so as to guarantee that $\sigma_n$ is
sufficiently far from the vertical segments $C_2$ and $C_4$ and the
horizontal segment $C_3$, but they are otherwise arbitrary. The
desired asymptotic approximation is obtained in terms of the residues
that emerge as the interval $I = [W_1,W_2]$ is deformed into the
contour $C$ in~\eqref{eq:contour}.

Using the contour $C$ and applying the residue theorem we obtain
\begin{equation}\label{eqn:base_sing_int_cont}
  \int_{W_1}^{W_2} \frac{A(\omega)}{\omega - \sigma_n}e^{-\mathrm{i}\omega t}d\omega = -2\pi\mathrm{i} A(\sigma_n)e^{-i\sigma_nt} + \sum_{j=1}^5\int_{C_j} \frac{A(\omega)}{\omega - \sigma_n}e^{-\mathrm{i}\omega t}d\omega.
\end{equation}
As shown in what follows, the integral terms on the right-hand side of
this equation either decay exponentially---at a rate faster than the
residue term as $t \to \infty$---or become super-algebraically small,
uniformly in time, as $\mu(W_1,W_2) \to +\infty$~\eqref{mu}, with the
restriction $t > T^\mathrm{inc}$ for the integrals over $C_2$ and
$C_4$.

The quantities
\begin{equation}\label{eqn:mjn_def}
  M_{j,n} = M_{j,n}(W_1,W_2) =\int_{C_j} \frac{1}{|\omega - \sigma_n|}|d\omega| \quad (1\le j \le 5)
\end{equation}
are used in Sections~\ref{sec:C1C5} through~\ref{asym-exp} to estimate
the asymptotic character of the right-hand integrals
in~\eqref{eqn:base_sing_int_cont}.

\subsubsection{Uniform-in-time super-algebraic decay of $C_1$ and $C_5$ integrals in~\eqref{eqn:base_sing_int_cont} as $\mu(W_1,W_2) \to +\infty$}\label{sec:C1C5}
In view of~\eqref{eqn:AepsBnd} and~\eqref{eqn:mjn_def}, the $j = 1$
and $j=5$ integrals in~\eqref{eqn:base_sing_int_cont} satisfy
\begin{equation}\label{eqn:cj_real_1}
  \left|\int_{C_j} \frac{A(\omega)}{\omega - \sigma_n}e^{-i\omega t}d\omega\right|  \le M_{j,n} \varepsilon(\mu(W_1,W_2)) \quad j = 1,5.
\end{equation}
As shown in what follows the quantities  $M_{j,n}$ with $j = 1,5$ are
bounded by a constant times $\mu(W_1,W_2)$ as
$\mu(W_1,W_2)\to +\infty$~\eqref{mu}, so that the growth of $M_{j,n}$
for $j = 1,5$ is overcome in~\eqref{eqn:cj_real_1} by the
corresponding super-algebraically fast decay~\eqref{eqn:eps_m_def} of
$\varepsilon(\mu(W_1,W_2))$). We establish the necessary bound for
$M_{1,n}$; the corresponding result for $M_{5,n}$ follows
similarly. To do this we write $\sigma_n = \sigma_{n,1} + i\sigma_{n,2}$
with $\sigma_{n,1},\sigma_{n,2}\in\mathbb{R}$, and note that 
\begin{equation}\label{w2delta}
  M_{1,n} = \frac{1}{2}\Biggr( \log\Biggl(1 + \frac{\omega - \sigma_{n,1}}{\sqrt{(\omega - \sigma_{n,1})^2 + (\sigma_{n,2})^2}}\Biggr) - \log\Biggl(1 - \frac{\omega - \sigma_{n,1}}{\sqrt{(\omega - \sigma_{n,1})^2 + (\sigma_{n,2})^2}}\Biggr) \Biggr) \Bigg|_{\omega =W_2}^{\omega = W_2 + \delta_1}.
\end{equation}
Then, letting
\begin{equation*}
    f^\pm(x) = \left|\log\left(1 \pm \frac{1}{\sqrt{1 + x^2}}\right)\right|, \quad x_1 = \frac{\sigma_{n,2}}{W_2 - \sigma_{n,1}}, \quad x_2 = \frac{\sigma_{n,2}}{W_2 + \delta_1 - \sigma_{n,1}}
\end{equation*}
and applying the triangle inequality to the right-hand side
of~\eqref{w2delta} we obtain the bound
\begin{equation}\label{eqn:ns_log_bound}
       M_{1,n} \le \frac{1}{2}\left(f^+(x_1) + f^+(x_2) +f^-(x_1) + f^-(x_2) \right).
\end{equation}
Since $\sigma_{n,1} \le W_2$~\eqref{eq:all_poles} (and, thus
$W_2 -\sigma_{n,1}\geq 0$), it follows that
$f^+(x_1) + f^+(x_2) \le 2\log(2)$. To quantify the behavior of
$f^-(x_1)$ and $f^-(x_2)$, on the other hand, we appeal to the
fact~\cite{burq1998decroissance} that, for any obstacle $\Gamma$,
regardless of trapping character, we have
$|\sigma_{n,2}| > e^{-\beta |\sigma_{n,1}|}$ for some (obstacle-dependent)
constant $\beta > 0$ (see
also~\cite[Sec. 2.4]{zworski2017mathematical}). Therefore, since
$|\sigma_{n,1}|\leq \max\{-W_1, W_2\}$, it follows that
\begin{equation}\label{eqn:xbnds}
    \frac{e^{-\beta \max\{-W_1, W_2\}}}{W_2 - W_1} \le x_1 \le \infty \quad \text{and} \quad \frac{e^{-\beta \max\{-W_1, W_2\}}}{W_2 + \delta_1 - W_1} \le x_2 \le \infty.
\end{equation}
Further, it is easily verified, by means of a Taylor expansion for $x$ near zero
and a straightforward estimate for $|x|\geq 1$, that there exists a
constant $K > 0$ such that
\begin{equation}\label{eqn:xbnds2}
 0 \leq f^-(x) \leq K \, \big|\log |x|\big| 
 \quad \text{for } |x| < 1,
 \qquad 
 f^-(x) \leq K 
 \quad \text{for } |x| \geq 1.
\end{equation}
Equations~\eqref{eqn:xbnds} and~\eqref{eqn:xbnds2} provide bounds on
$f^-(x_1)$ and $f^-(x_2)$. Combining these bounds with the simple
bound established above for $f^+(x_1) + f^+(x_2)$, and using the
inequality $|\log(W_2 - W_1)| \leq |\log(W_2 + \delta_1 - W_1)|$ valid for
$W_2-W_1 > 1$, $\delta_1>0$, we obtain the estimate
\begin{equation}\label{eqn:M1nbound}
  M_{1,n} \;\leq\; 
  \log(2) + K \Big(1 + \beta \max\{-W_1,\, W_2\} 
  + \log(W_2 + \delta_1 - W_1)\Big),
  \quad W_2 - W_1 > 1.
\end{equation}
Together with~\eqref{eqn:cj_real_1}, in turn, the
bound~\eqref{eqn:M1nbound} and an analogous bound for $M_{5,n}$ show
that the integrals over the contours $C_1$ and $C_5$ on the right-hand
side of~\eqref{eqn:base_sing_int_cont} are super-algebraically small,
uniformly in time, as $\mu(W_1,W_2)\to +\infty$~\eqref{mu}, as
claimed.

\subsubsection{Simple preparation estimates for the integrals over $C_j$, $j = 2,3,4$}
To estimate the contributions from the contour segments $C_j$ with
$j = 2,3,4$, on the other hand, we set $\omega = \omega_1 +
\mathrm{i}\omega_2$. In view of
equation~\eqref{eqn:singleinc}, we then obtain
\begin{equation*}
    A(\omega) = \int_0^{T^\mathrm{inc}} a(t)\, e^{\mathrm{i}\omega t} \, dt =\int_0^{T^\mathrm{inc}} a(t)\, e^{\mathrm{i}\omega_1 t - \omega_2 t} \, dt = e^{-\omega_2 T^\mathrm{inc}} A^{\mathrm{bd}}(\omega),
\end{equation*}
where
\begin{equation*}
    A^{\mathrm{bd}}(\omega) = \int_0^{T^\mathrm{inc}} a(t)\, e^{\mathrm{i}\omega_1 t - \omega_2(t - T^\mathrm{inc})} \, dt.
\end{equation*}
Clearly, for the relevant values of $\omega_2 = \Im(\omega) \leq 0$
and for all $\omega_1\in\mathbb{R}$, the bounded quantity
$A^{\mathrm{bd}}$ satisfies
\begin{equation}\label{eqn:Abdn}
   |A^{\mathrm{bd}}(\omega)| \leq \alpha\quad\mbox{where}\quad \alpha = \int_0^{T^\mathrm{inc}} |a(t)| \, dt. 
\end{equation}
It follows that, for $j = 2,3,4$, the integrals over the contours
$C_j$ on the right-hand side of~\eqref{eqn:base_sing_int_cont} may be
expressed in the form
\begin{equation}\label{eqn:contourCJ}
  \int_{C_j} \frac{A(\omega)}{\omega - \sigma_n} \, e^{-\mathrm{i} \omega t} \, d\omega = \int_{C_j} \frac{e^{\omega_2 (t - T^\mathrm{inc})} \, A^{\mathrm{bd}}(\omega_1 + \mathrm{i} \omega_2) \, e^{-\mathrm{i} \omega_1 t}}{\omega_1 + \mathrm{i} \omega_2 - \sigma_n} \, d\omega.
\end{equation}
\subsubsection{Exponential decay of the integral over $C_3$
  in~\eqref{eqn:base_sing_int_cont} as $t \to \infty$ \label{exp-dec}}
Over the contour $C_3$ we have $\omega_2 = -(h + \delta_2) < 0$, and thus in view of~\eqref{eqn:mjn_def},~\eqref{eqn:Abdn}, and~\eqref{eqn:contourCJ} we obtain
\begin{equation}\label{eqn:contourC2}
  \left | \int_{C_3}\frac{A(w)}{\omega - \sigma_n}e^{-\mathrm{i}\omega t}d\omega\right |\le e^{-(h + \delta)(t-T^\mathrm{inc})}\int_{C_3} \left|\frac{A^\mathrm{bd}(\omega)}{\omega - \sigma_n}\right||d\omega| \le \alpha M_{3,n}(W_1,W_2) e^{-h(t-T^\mathrm{inc})}.
\end{equation}
Using a closed form expression similar to~\eqref{w2delta} we see that
the quantity $M_{3,n}(W_1,W_2)$~\eqref{eqn:mjn_def} grows at most
logarithmically as $\mu(W_1,W_2)\to+\infty$~\eqref{mu}. In particular,
for fixed $W_1$ and $W_2$, the $C_3$ integral decays exponentially as
$t \to \infty$ at the rate $e^{-h(t - T^\mathrm{inc})}$. Since
$\Im(\sigma_n) > -h$, this decay is faster than that of the first term
on the right-hand side of~\eqref{eqn:base_sing_int_cont}.

\subsubsection{Super-algebraic decay of the integrals over $C_2$ and $C_4$ in~\eqref{eqn:base_sing_int_cont} as $\mu \to +\infty$ for $t > T^\mathrm{inc}$}\label{sec:C2C4}

To estimate the integrals over the vertical segments $C_j$ with $j=2$
and $j=4$ we first integrate by parts the integral $A^\mathrm{bd}$,
which yields
\begin{equation}\label{eqn:compAdecay}
    |A^{\mathrm{bd}}(\omega_1 + \mathrm{i}\omega_2)| \le \frac{1}{|\omega_1 + \mathrm{i}\omega_2|^n} \int_0^{T^\mathrm{inc}} |a^{(n)}(t)e^{-\omega_2(t - T^\mathrm{inc})}|dt \le  \frac{1}{|\omega_1|^n}\int_0^{T^\mathrm{inc}} |a^{(n)}(t)|dt \quad  (\omega_2 \le 0)
\end{equation}
for all $\omega_1\in\mathbb{R}$ and all $n\in\mathbb{N}$---since
$0 < e^{-\omega_2(t - T^\mathrm{inc})}\leq 1$ for
$0\leq t\leq T^\mathrm{inc}$. Thus, in view of~\eqref{eqn:eps_m_def},
for $W_1<0$ and $W_2>0$ we have
\begin{equation}\label{eps-I}
  |A^{\mathrm{bd}}(W_1 - \delta_1 + \mathrm{i}\omega_2)| < \varepsilon(\mu(W_1,W_2)) \quad \text{and} \quad |A^{\mathrm{bd}}(W_2 + \delta_1 + \mathrm{i}\omega_2)| < \varepsilon(\mu(W_1,W_2)) \quad \text{for all} \quad {\omega_2 \le 0}.
\end{equation}
Since $\omega_2 \le 0$ on the contours $C_2$ and $C_4$, it follows that $|e^{\omega_2(t - T^\mathrm{inc})}| < 1$ whenever $t > T^\mathrm{inc}$. Consequently, equations \eqref{eqn:mjn_def}, \eqref{eqn:contourCJ}, and \eqref{eps-I} yield
\begin{equation}\label{eqn:vert_bound}
    \left | \int_{C_j}\frac{A(w)}{\omega - \sigma_n}e^{-\mathrm{i}\omega t}d\omega\right| \le M_{j,n}\, \varepsilon(\mu(W_1,W_2))  \quad \text{for} \quad j = 2,4 \quad\mbox{and}\quad  t > T^{\mathrm{inc}}.
\end{equation}
Here, like $M_{3,n}$, the $j=2,4$ quantities
$M_{j,n}$~\eqref{eqn:mjn_def} grow at most logarithmically as $W_1$
and $W_2$ grow without bound.

In summary, as noted in connection
with~\eqref{eqn:base_sing_int_cont},
Sections~\ref{sec:C1C5}--\ref{sec:C2C4} establish that the
contributions from the integral terms on the right-hand side of that
equation are either super-algebraically small, uniformly in time, as
$\mu(W_1,W_2) \to \infty$ ($j=1,2,4,5$), or decay exponentially as
$t \to \infty$ ($j=3$), at a rate faster than that of the
exponentially decaying residue term.

\subsubsection{Large-time asymptotic expansion of $\mathcal{I}_2(\br,t)$: Proof of Theorem~\ref{thm:sing_exp}\label{asym-exp}}
\begin{proof}[Proof of Theorem~\ref{thm:sing_exp}]
  Substituting~\eqref{eqn:base_sing_int_cont} into~\eqref{eqn:I2}
  and using~\eqref{eq:SExp} we obtain
 \begin{equation}\label{eqn:sem_error_rate}
  \mathcal{I}_2(\br,t) = \mathcal{E}_h^{I}(\br,t)  + \sum_{j = 1}^5 \mathcal{I}^{C_j}_{2}(\br,t)
\end{equation}
where
\begin{equation}\label{eqn:I2Cj}
  \mathcal{I}^{C_j}_{2}(\br,t) = \sum_{n = 1}^{N_h^{I}}c_{\bp,n}(\br)\int_{C_j} \frac{A(\omega)}{\omega - \sigma_n}e^{-\mathrm{i} \omega t} d\omega\quad,\quad 1 \le j \le 5.
\end{equation}
Then, calling
\begin{equation}\label{MjIh-def}
  M_{j}^{I,h}(\br) = \sum_{n = 1}^{N_h^{I}}|c_{\bp,n}(\br)M_{j,n}|, \quad 1 \le j \le 5,
  \end{equation}
in view of~\eqref{eqn:contourC2} we obtain
\begin{equation}\label{I2C1_bound}
  |\mathcal{I}^{C_3}_{2}(\br,t)| \le  \alpha M^{I,h}_3(\br)e^{-h(t - T^\mathrm{inc})}.
\end{equation}
Further, equations~\eqref{eqn:cj_real_1} and~\eqref{eqn:vert_bound}
tell us that
\begin{equation}\label{I2Cj_bound}
  |\mathcal{I}^{C_j}_{2}(\br,t)| \le  M^{I,h}_j(\br)\, \varepsilon(\mu(W_1,W_2)) \quad \text{for} \quad j = 1,2,4,5 \quad\mbox{and}\quad t > T^\mathrm{inc},
\end{equation}
Thus, in view of~\eqref{eqn:sem_error_rate}, it follows that
\begin{equation}\label{eqn:I2_Asym_err}
  |\mathcal{I}_2(\br,t) - \mathcal{E}_h^{I,\mathrm{e}}(\br,t)| \le  \alpha M^{I,h}_3(\br)e^{-h(t - T^\mathrm{inc})} + \varepsilon(\mu(W_1,W_2))\sum_{\substack{j=1 \\ j \ne 3}}^5M^{I,h}_j(\br) \quad \text{for} \quad t > T^\mathrm{inc}.
\end{equation}

Now, employing~\eqref{eqn:M1nbound} and similar estimates for
$M_{j,n}$~\eqref{eqn:mjn_def}, we obtain the estimate
\begin{equation*}
M_{j,n} \le K\mu(W_1, W_2), \quad 1 \le j \le 5, \quad 1 \le n \le N^I_h
\end{equation*}
for a certain constant $K$.  Combining this inequality
with~\eqref{MjIh-def} and the assumptions~\eqref{eq:pole-number}
and~\eqref{eq:residue-size} yields the bound
\begin{equation*}
M^{I,h}_j(r) \le K(\mu(W_1, W_2))^3, \quad 1 \le j \le 5.
\end{equation*}
Substituting this bound into~\eqref{eqn:I2_Asym_err} we then obtain the desired relation
\begin{equation*}
|\mathcal{I}_2(\br,t) - \mathcal{E}_h^{I}(\br,t)|
\le M(\mu(W_1, W_2))^3 \big(e^{-h(t - T^\mathrm{inc})} + \varepsilon(\mu(W_1, W_2))\big),
\quad t > T^\mathrm{inc}
\end{equation*}
and the proof of the theorem is thus complete.
\end{proof}

\subsection{Time-domain singularity expansion}\label{sec:SEM}
Although the asymptotic validity of the classical singularity
expansion~\cite{lax1969decaying,lax2005exponential,
  tang2000resonance,baum2005singularity} is not a requirement for the
validity of the Singularity Subtraction-enabled FTH method presented in this paper
(which is a numerical algorithm that produces the solution $u$ at all
times, and not only asymptotically for large times), it is relevant to
highlight certain interesting connections between the two
approaches. In view of~\eqref{eqn:sing_sub_nok}, and per arguments as
in the proof of Theorem~\ref{thm:sing_exp}, it may be expected that
(a)~$\mathcal{I}_1(\br,t)$ decays exponentially (up to the error
tolerance $O(\varepsilon(\mu(W_1,W_2)))$, at a rate faster than the
most rapidly decaying exponentials in~\eqref{eq:SExp}; and, that (b)~A
bound of the form $|u - u^I| < O(\varepsilon(\mu(W_1,W_2)))$ holds as
$\mu(W_1,W_2) \to +\infty$. Under these conditions, within an
$O(\varepsilon(\mu(W_1,W_2)))$ error tolerance, the following
asymptotic representation it is expected to hold:
\begin{equation}\label{eqn:usem}
  u(\br,t) \sim \mathcal{E}_h^I = -\mathrm{i}\sum_{n=1}^{N_h^{I}} c_{\bp,n}(\br) A(\sigma_n)e^{-\mathrm{i}\sigma_nt} \quad \text{as}\quad t \to \infty.
\end{equation}
More specifically---though not rigorously established---it is plausible that
\[
|u(\br,t) - \mathcal{E}_h^I| < M (\mu(W_1,W_2))^3 (e^{-h(t - T^\mathrm{inc})} + \varepsilon(\mu(W_1,W_2))) \quad \text{for} \quad t > T^\mathrm{inc}\quad
\]
for some constant $M$.

These plausible expansions and approximations are closely related to
the aforementioned asymptotic expansions of the scattered field
$u(\br,t)$, which have been widely considered in the
literature~\cite{meylan2012complex,hazard2007singularity,baum2005singularity,meylan2014singularity,lalanne2018light};
the overall approach has come to be known as as the ``Singularity
Expansion Method''~\cite{baum2005singularity}.  The validity of such
expansions has only been
established~\cite{lax1969decaying,lax2005exponential,
  tang2000resonance, martin2021time} for 3D non-trapping scatterers
(namely, scatterers for which a billiard ball bouncing off the
scatterer boundaries escapes to infinity after finitely many
bounces). In two dimensions such an expansion could only hold provided
the frequency content of the incident excitation tends to vanish
sufficiently rapidly as the frequency $\omega$ tends to zero---since,
as it is known~\cite{muraveui1979asymptotic}, 2D scattered field only
decay as $O(1/(t\log^2(t)))$ in presence of zero frequency content.

In any case, as mentioned in Section~\ref{intro}, a wide range of
numerical experiments conducted as part of this work clearly suggest
that the singularity expansion method is valid independently of the
trapping character of the scattering obstacles considered.


\section{Numerical implementation of FTH with Singularity
  Subtraction}\label{sec:Num_Imp}

This section presents the proposed Singularity Subtraction-enabled FTH
algorithm (FTH-SS) for the numerical solution of the initial and
boundary-value problem~\eqref{eqn:wave}. This method, which combines
the FTH methodology reviewed in Section~\ref{sec:abl_rev} with the
singularity subtraction strategy embodied in
equations~\eqref{eqn:ghs}--\eqref{eqn:sing_sub_nok}
and~\eqref{eqn:I2_asym_exp}, proceeds by first computing the
incidence-excited complex resonances and their residues by means of
the IE method (Section~\ref{sec:realleftscal}). The resulting solution $u\approx u^I(\br,t)$ is obtained via
\begin{equation}
    u^I(\br,t) = \mathcal{I}_1^\mathrm{e}(\br,t) + \mathcal{I}_2^\mathrm{e}(\br,t),
\end{equation}
where $\mathcal{I}_1^\mathrm{e}(\br,t)$ and
$\mathcal{I}_2^\mathrm{e}(\br,t)$ are defined in the same manner as
$\mathcal{I}_1(\br,t)$ and $\mathcal{I}_2(\br,t)$ in
equations~\eqref{eqn:ghs}--\eqref{eqn:sing_sub_sing}
and~\eqref{eqn:I1I2tok}, with $\sigma_n$, $c_{\bp,n}$, and $N_h^I$
replaced by their incident excited versions $\rho_n$, $d_{\bp,n}$, and
$N^{I,\mathrm{e}}_h$, respectively; see
Remark~\ref{rmk:using_true_pols}. In particular, the incident excited
version of regularized singularity subtraction function
$U_{\bp,h}^\mathrm{s}(\br,\omega)$~\eqref{eqn:ghs} is given by

\begin{equation}\label{eq:usie}
U_{\bp,h}^\mathrm{s,e}(\br,\omega) = U_{\bp}(\br,\omega) - \sum_{n=1}^{N^{I,\mathrm{e}}_h} \frac{d_{\bp,n}}{\omega - \rho_n}
  \end{equation}

To reduce computational costs, the algorithm obtains the densities
$\psi_\bp$ (that, per equations~\eqref{eq:psi_p_sol}–\eqref{eq:ukslow}
and~\eqref{eqn:u_sum_uk}–\eqref{eqn:ukw}, are needed by the FTH
algorithm to compute the solutions $U_k^{\mathrm{slow}}$), by means of
an inexpensive reprocessing step applied to the densities $\psi_\bp$
(eq.~\eqref{eq:psi_p_sol}) generated by
Algorithm~\ref{alg:real-line-adaptive} as part of the evaluation of IE
resonances. While the frequency set used during the evaluation of 
Algorithm~\ref{alg:real-line-adaptive} generally
differs from the frequency set $\mathcal{F}$~\eqref{eqn:F_freqs}
needed to compute the inverse Fourier-transform by the quadrature
rules described in Section~\ref{sec:inv_four_tran},
$\mathcal{D}_\mathcal{F}$~\eqref{eqn:dens_set} may be cheaply computed
using the rational approximants~\eqref{eqn:Rat_approx_set}. Indeed,
considering the partition~\eqref{eq:partit}, for
$\omega \in \mathcal{F}\cap I_\ell$ ($\ell = 1,\dots,L$) the
approximant $R^{I_\ell}(\omega_j)$ provides the necessary (accurate)
approximations
\begin{equation}\label{psi-R-reuse}
\psi_\bp(\br',\omega) \approx R^{I_\ell}(\omega),\quad \omega \in \mathcal{F}\cap I_\ell.
\end{equation}


Once the set $\mathcal{D}_\mathcal{F}$ has been obtained, the integral
$\mathcal{I}^\mathrm{e}_{1}(\br,t)$ is evaluated by means of the
corresponding incident excited version of~\eqref{eqn:I1I2tok} using
the quadrature rules presented in Section~\ref{sec:inv_four_tran}. The
evaluation of $\mathcal{I}_2^\mathrm{e}(\br,t)$ proceeds under two
different scenarios. At pre-asymptotic times, on one hand, this
integral is obtained using the corresponding incident excited versions
of~\eqref{eqn:I2} and~\eqref{eqn:ss_second_int} by following the
description accompanying the latter equation. For sufficiently large
times, in view of Remark~\ref{rmk:using_true_pols} and
Theorem~\ref{thm:sing_exp}, the approximation
\begin{equation}\label{eqn:usem-e}
    \mathcal{I}_2^\mathrm{e}(\br,t) \approx \mathcal{E}^{I,\mathrm{e}}_h(\br,t) \quad \text{where} \quad \mathcal{E}^{I,\mathrm{e}}_h(\br,t) =  -\mathrm{i}\sum_{n = 1}^{N_h^{I,\mathrm{e}}} d_{\bp,n}(\br) A(\sigma_n)e^{-\mathrm{i}\rho_nt}
\end{equation}
is used instead of~\eqref{eqn:I2_asym_exp}. This leads to significant
computing-time savings (as evaluation of integrals with
highly-oscillatory integrands is avoided) while capturing exponential
solution decay that, however, for highly trapping structures, can
continue to produce significant scattered fields for long times---as
illustrated in Section~\ref{sec:Whisper}. A pseudo-code for the
singularity subtraction method, provided in
Algorithm~\ref{alg:FTH_with_sinsub}, evaluates the
approximation~\eqref{eqn:uab} $u^I(\br,t)$ of $u(\br,t)$ for all $\br$
in a given set $\mathbf{R}$ of spatial observation points at which the
scattered field is to be produced.

\vspace{0.3cm}
\begin{algorithm}
\DontPrintSemicolon 
Compute the set of rational approximants~\eqref{eqn:Rat_approx_set} and corresponding poles $P_h^{I,\mathrm{e}}$~\eqref{eqn:inc_res} relevant to the incident field using Algorithm~\ref{alg:real-line-adaptive}.\;
For each pole in $P_{h}^{I,\mathrm{e}}$~\eqref{eqn:inc_res} compute the residues at all points in $\mathbf{R}$  using the method in Section~\ref{sec:res_comp}.\;
For the equally spaced discrete set of frequencies $\mathcal{F}$~\eqref{eqn:F_freqs}, compute the set of densities $\mathcal{D}_\mathcal{F}$ using the rational approximants ~\eqref{eqn:Rat_approx_set}.\;
Using $\mathcal{D}_\mathcal{F}$ evaluate $U_{\bp}(\br,\omega)$ for all frequencies in $\mathcal{F}$ and all points in $\mathbf{R}$.\;
Compute the singularity-subtracted version
$U_{\bp,h}^\mathrm{s,e}(\br,\omega)$~\eqref{eq:usie} of $U_\bp$ \;
Evaluate $u^I(\br,t) = \mathcal{I}^\mathrm{e}_1(\br,t) + \mathcal{I}^\mathrm{e}_2(\br,t)$ using by the quadrature rules discussed in Section~\ref{sec:inv_four_tran} for $\mathcal{I}^\mathrm{e}_1$ and the quadrature method discussed in Section~\ref{pre-asym} along with the asymptotic expansion~\eqref{eqn:usem-e} for $\mathcal{I}^\mathrm{e}_2$.
\caption{Singularity subtraction-enabled FTH
algorithm (FTH-SS)} \label{alg:FTH_with_sinsub}
\end{algorithm}
\vspace{0.3cm}

\section{Numerical results}\label{sec:Num_Res}
This section presents a variety of numerical illustrations of the
FTH-SS algorithm and its various elements, including illustrations of
the exponential convergence of the asymptotic
expansion~\eqref{eqn:I2_asym_exp}, demonstrations of the ability of
the IE method to regularize Fourier-transform integrals via
singularity subtraction, as well as applications of the overall FTH-SS
method in challenging configurations containing trapping obstacles. The
examples considered include test cases for both open-arc and
closed-curve scatterers, such as those shown in
Figure~\ref{fig:scatterers}, along with a closed circular geometry and
the whispering-gallery configuration depicted in
Figure~\ref{fig:whisper_sem_comp}.

\begin{figure}[H]
    \centering
    \includegraphics[scale=0.3]{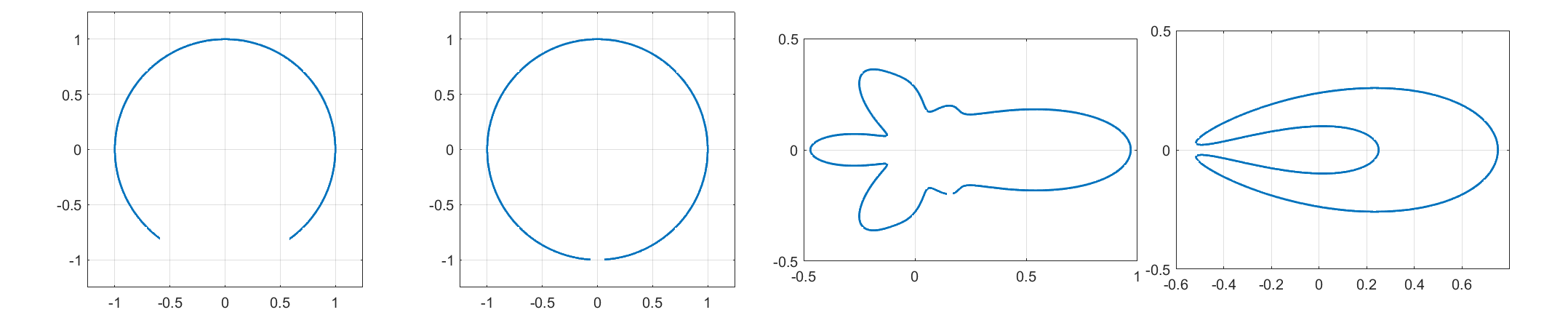}
    \caption{Scatterers used in some of the examples presented in this
      section. From left to right: large-aperture circle
      ($1.25$-radian aperture), small-aperture circle ($0.125$-radian
      aperture), open rocket-shaped cavity, closed-curve cavity.}
    \label{fig:scatterers}
\end{figure}

The first two panels in Figure~\ref{fig:scatterers} consist of circular
arcs of radius 1 with apertures spanning $1.25$ radians and $0.125$
radians, respectively.  The rocket-shaped scatterer in the third panel, is given
by the parametrization $\gamma: [0,2\pi] \to \R^2$ given by
$\gamma(s) = (C(s) \cos(s), C(s)\sin(s))$ where
\begin{equation*}
  C(s) = 0.35 + 0.1 \cos(s) + 0.12\cos(2s) + 0.15\cos(3s) + 0.1\cos(4s) + 0.1\cos(6s) + 0.05\cos(8s). 
\end{equation*}
The full (closed) rocket boundary is produced when the full span
$0\leq s\leq 2\pi$ is used, while the rocket-with-opening displayed in
Figure~\ref{fig:scatterers} is obtained by restricting the
parametrization to the complement of the interval
$ 5.338\leq s\leq 5.427$.  The closed-curve cavity presented in the
fourth panel, finally, coincides with the one given
in~\cite[Fig. 1]{dominguez2024nystrom}.

Two incident fields $u^{\mathrm{inc}}$ (see
equation~\eqref{eqn:wave_bc}), are considered in this section, namely
\begin{equation}\label{eqn:gaussian_incidence}
  u^{\mathrm{inc}} = u^{\mathrm{inc}}_1(\br,t) = \text{Fourier transform of } \ e^{-\frac{(\omega - \omega_0)^2}{\sigma^2}}e^{\mathrm{i}\kappa(\omega) \bp \cdot \br},
\end{equation}
for various choices of the parameters $\bp$, $\omega_0$, and $\sigma$; and,
\begin{equation}\label{eqn:uinc_2}
u^{\mathrm{inc}} = u^{\mathrm{inc}}_2(\br,t) = \text{Fourier transform of } (1 - w(\omega, 1)) F(\br,\omega),
\end{equation}
where $w(\omega,1)$ is the window function defined
in~\eqref{eqn:window_func} with $H=1$, and, where, using the chirp
function
\begin{equation}\label{eqn:chirp_incidence}
  a(t) = \sin\left(g(t) + \frac{1}{4000}g^2(t)\right) \quad \text{with} \quad g(t) = 4t + 6\cos\left(\frac{t}{\sqrt{12}}\right),
\end{equation}
together with the window function~\eqref{eqn:window_func} (for given
values of $s$, $H$, and $\bp$), the
function $F(\br,\omega)$ is given by the Fourier transform of
\begin{equation}\label{eqn:td_chirp}
  f(\br,t) = w(t - s - \bp \cdot \br; H) a(t - \bp \cdot \br)
\end{equation}
with respect to $t$. The values of the various parameters utilized in
each example are specified within the corresponding description. In
line with Remark~\ref{rmk:zer_freq}, the quantity $(1 - w(\omega,1))$
in~\eqref{eqn:uinc_2} is employed to eliminate zero frequency content.

In all cases the reference solution $u_\mathrm{ref}(\br,t)$ was
equated to the FTH-SS solution $u^I(\br,t)$ (equation~\eqref{eqn:uab})
with sufficiently fine discretizations and a sufficiently large
frequency interval $I$. In particular, these computations incorporate
the singularity subtraction method, but do not include the asymptotic
expansion~\eqref{eqn:I2_asym_exp}. Convergence of $u^I(\br,t)$ to
near machine precision was assessed by refining the boundary integral
equation discretization, increasing the size of the frequency interval
$I$, and enlarging the number of integration frequencies used.

This section is organized as follows. Section~\ref{sec:conv_regularized_int} illustrates the
overall impact of the singularity-subtraction technique, while
Section~\ref{sec:adaptivealgdemo} demonstrates the effectiveness of
the IE algorithm (Section~\ref{sec:realleftscal}) in capturing the
resonances associated with the solution $u$ generated by a given
incident field. The regularizing properties of the SS approach
(Section~\ref{sec:RS}), are then examined in
Section~\ref{sec:SingSubEx}, followed by  a numerical illustration of the validity of the
assumptions~\eqref{eq:pole-number} and~\eqref{eq:residue-size} in
Theorem~\ref{thm:sing_exp} for a strongly trapping scatterer. Finally, Sections~\ref{sec:Time_Domain_Res_Build} and~\ref{sec:Whisper} present
applications of the complete FTH-SS method across a range of
illustrative scenarios, including numerical validation of the
singularity expansion~\eqref{eqn:usem} in accurately representing the scattered field at late times.

\begin{figure}[H]
  \centering
  \includegraphics[width=\linewidth]{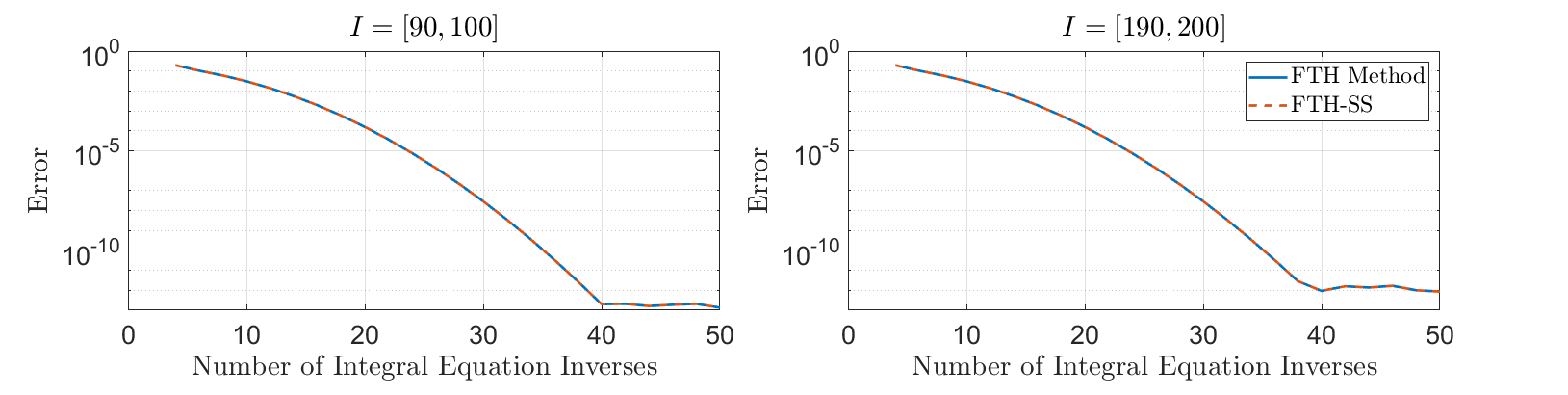}
  \caption{FTH and FTH-SS solution errors for the closed-circle
    scatterer, as a function of the number of integral equation
    inverses used. Due to its non-trapping nature, this scatterer does
    not generate complex resonances near the real axis. Consequently,
    the FTH-SS method performed no actual singularity subtraction, and
    its results coincide with those of the standard FTH method in this
    case.}
  \label{fig:FTH_Circle}
\end{figure}

\subsection{Comparison of FTH and
  FTH-SS}\label{sec:conv_regularized_int}
This section compares the character of the FTH and the FTH-SS methods
(Sections~\ref{sec:abl_rev} and~\ref{sec:Num_Imp}) in the contexts of
trapping and non-trapping obstacles. As expected, the FTH-SS method
provides significant advantages for trapping obstacles, but it
essentially coincides with the FTH method for non-trapping
obstacles. The test cases considered use the Gaussian incident
field~\eqref{eqn:gaussian_incidence} with incident direction
$\bp = (1,1)$ and $\sigma = 0.679$. Two center frequencies are
considered, namely $w_0 = 95$ and $w_0 = 195$; with these selections
the corresponding Gaussian functions vanish up to machine precision
outside the frequency intervals $I = [90,100]$ and $I =[190,200]$,
respectively. In each case, reference solutions
$u_\mathrm{ref}(\br,t)$ were obtained as detailed in the introduction
to Section~\ref{sec:Num_Res}.

The first example considers scattering by a closed circular obstacle
of radius 1, centered at the origin. This is a non-trapping obstacle
and therefore does not produce complex resonances near the real
axis. A reference solution $u_\mathrm{ref}(\br,t)$ at the point
$\br = (0,-1.3)$ and at $500$ equispaced times in the interval
$[0,20]$ was used for evaluation of
errors. Figure~\ref{fig:FTH_Circle} displays the maximum all-time
error for both the FTH and FTH-SS methods as functions of the number
of integral-equation solves (equivalently, the number of frequencies)
used to compute the inverse Fourier transform~\eqref{eqn:ukw}. The
left and right panels in the figure correspond to incident fields with
non-vanishing frequency content supported in the intervals
$I = [90,100]$ and $I = [190,200]$, respectively. Following the
recommendation in Section~\ref{adaptive}, a non-adaptive version of
the IE algorithm was used to produce this figure, since the number of
frequencies $J$---which reaches up to $J = 50$ in the
figure---satisfies the condition $J < 200$ and therefore does not
trigger adaptivity. In particular, the same number of integral
equation inverses was used by the FTH and FTH-SS method in this case:
as no complex resonances were found by the $IE$ algorithm, the FTH and
FTH-SS methods actually coincide in this case. As demonstrated by the
second example in this section, the situation differs markedly in the
case of scattering by a trapping obstacle: in such cases, the FTH-SS
method can significantly outperform the FTH method.

\begin{figure}[h]
    \centering
    \includegraphics[width=\linewidth]{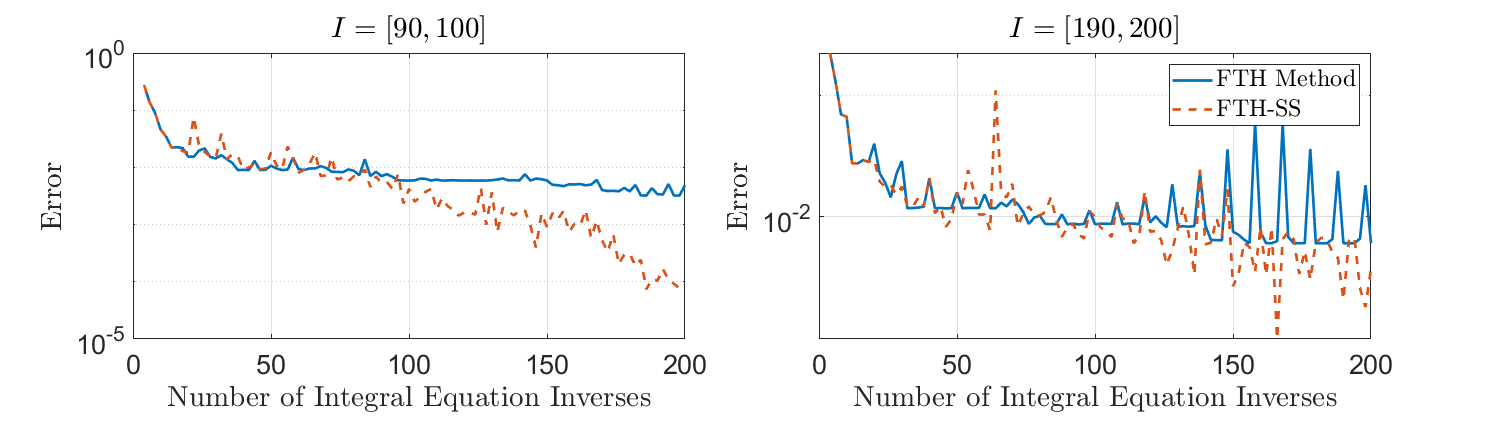}
    \caption{FTH and FTH-SS solution errors for the open circle
      scatterer depicted on the leftmost panel in
      Figure~\ref{fig:scatterers}, as a function of the number of
      integral equation inverses used. Since a number $J < 200$ of
      inverses was used for these test cases, the FTH-SS method did
      not trigger the IE-algorithm's adaptivity.}
    \label{fig:FTH_comp_no_adapt}
  \end{figure}
  
\begin{figure}[h]
    \centering
    \includegraphics[width=\linewidth]{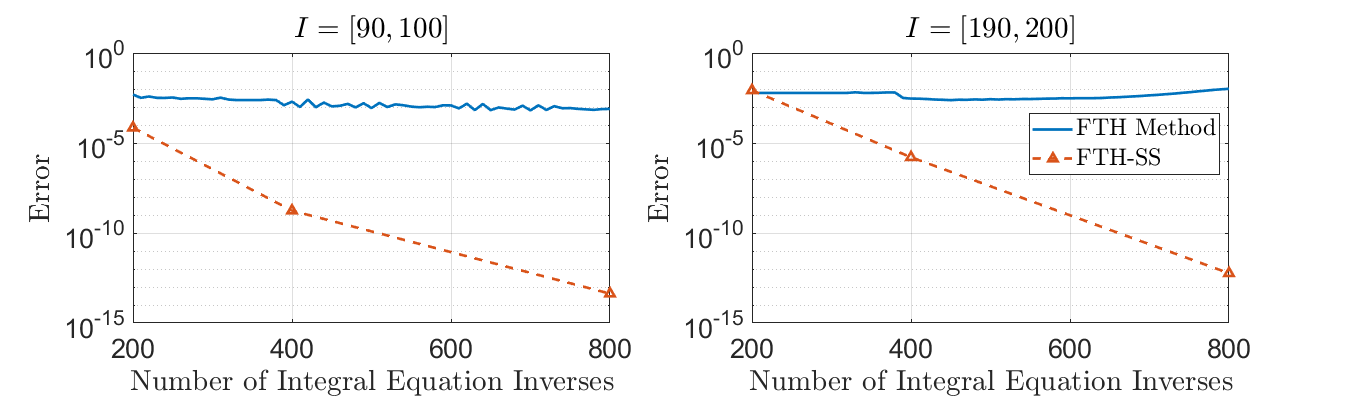}
    \caption{Same as Figure~\ref{fig:FTH_comp_no_adapt} but using a
      different range of numbers of integral equation inverses---for
      which the IE-algorithm's adaptivity was triggered. The triangles
      mark the errors corresponding to three different numbers of
      inverses actually used by FTH-SS method---which are determined
      by each one of the three adaptivity levels triggered in the
      adaptive IE method.}
    \label{fig:FTH_comp_adapt}
\end{figure}

The second example in this section concerns scattering by the open
circle shown in the leftmost panel of Figure~\ref{fig:scatterers},
using the same two incident fields as in the first example. The
absolute value of the real part of the total field corresponding to the first incident
field---associated with the frequency interval $[90, 100]$---is
displayed in Figure~\ref{fig:circ_large_opening}. A reference solution
$u_\mathrm{ref}(\br,t)$ at the point $\br = (0,0)$ and at $500$
equispaced times in the interval $[0,120]$ was used for error
evaluation.  The RE algorithm produced $130$ and $192$ complex
resonances in the box $\mathcal{M}^I_h$ with $h = 0.3$ for the
intervals $I = [90,100]$ and $I = [190,200]$, respectively. Maximum
solution errors---evaluated at $\br = (0, 0)$ over 500 equally spaced
time points in the interval $[0, 120]$---for incident fields with
frequency content in the intervals $I = [90, 100]$ and
$I = [190, 200]$ are shown on the left and right panels, respectively,
in both Figures~\ref{fig:FTH_comp_no_adapt}
and~\ref{fig:FTH_comp_adapt}.

\begin{figure}[h]
    \centering
    \includegraphics[width=\linewidth]{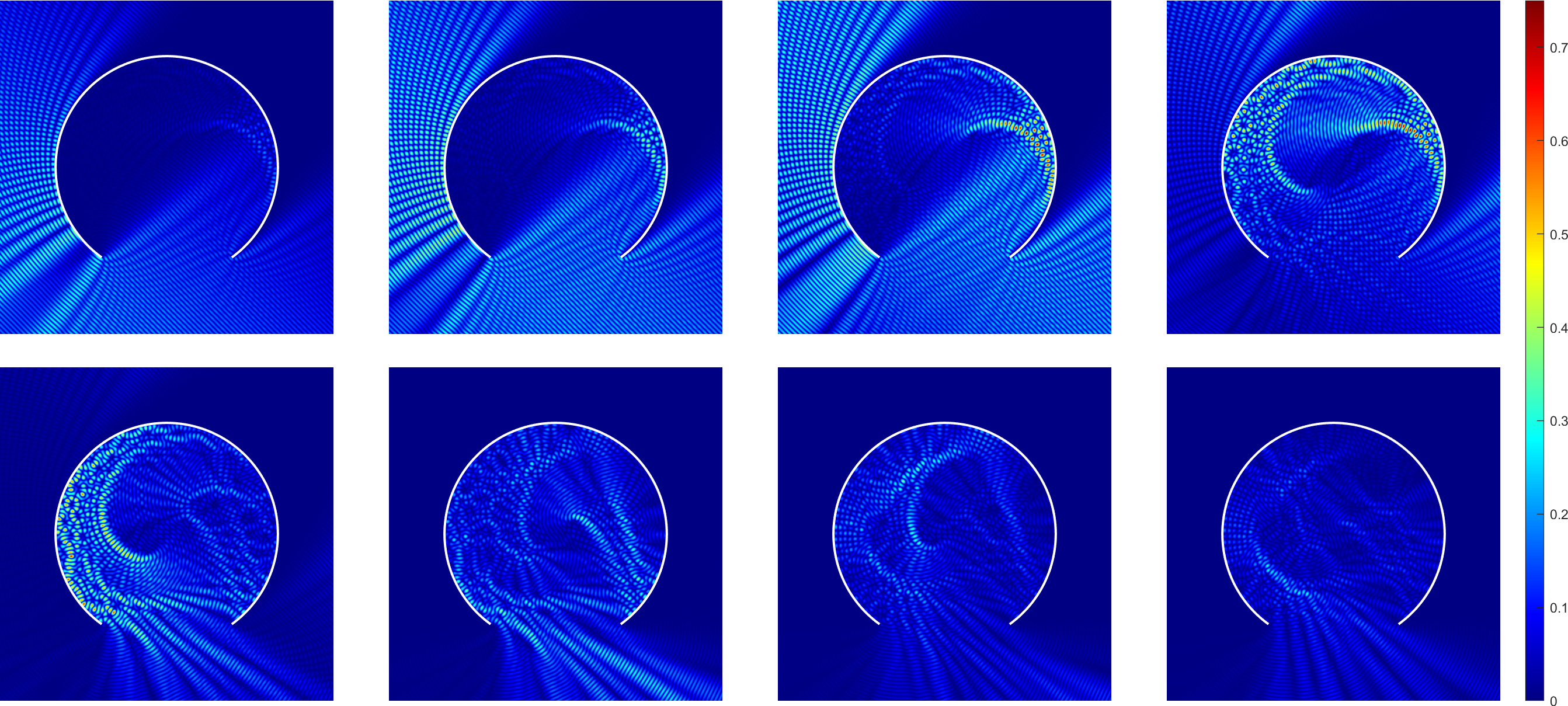}
    \caption{Temporal evolution of scattering from the large-aperture circle,
      shown at an increasing sequence of times from left to right and
      top to bottom. Each panel shows the absolute value of the real
      part of the total field.}
    \label{fig:circ_large_opening}
\end{figure}

As in the first example of this section, the results in
Figure~\ref{fig:FTH_comp_no_adapt} were obtained using the IE
algorithm without adaptivity, since only frequency numbers $J < 200$
were used in this case. In particular, this figure demonstrates that
even without the adaptive version of the IE algorithm, the FTH-SS
method offers a significant advantage. As shown in
Figure~\ref{fig:FTH_comp_adapt}, an even greater improvement is
achieved when frequency numbers $J \ge 200$ are used in combination with
the fully adaptive IE algorithm. In Figure~\ref{fig:FTH_comp_adapt}
only three numbers-of-inverses, $200$, $400$ and $800$, marked as
triangular error points, were used for the FTH-SS method. These values
correspond to splitting the interval $I$ into $1$, $2$ and $4$
subintervals respectively, as part of the adaptive IE algorithm with
initial input $J = 200$ in the interval $I$. As additional reference
points we report that for the intervals $I = [90,100]$ and
$I = [190,200]$ and utilizing as many as $10,000$ inverses, the FTH
method (without singularity subtraction) produced solutions with
errors of $1.2 \cdot 10^{-8}$ and $5.0 \cdot 10^{-5}$, respectively.

\begin{figure}[h!]
    \centering
    \includegraphics[width=\linewidth]{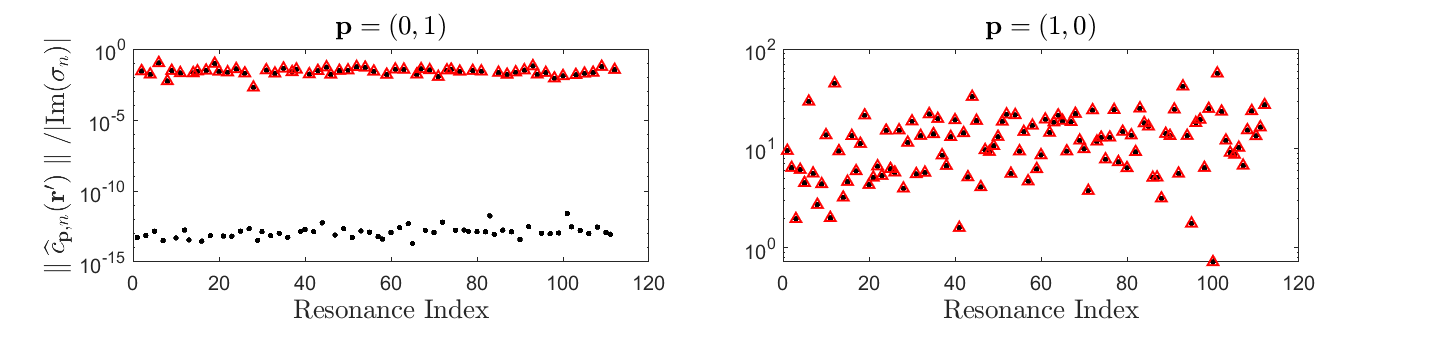}
    \caption{Value of the metric~\eqref{eq:relevance} at
      $\omega = \Re(\sigma_n)$ for complex resonances produced by the
      RE and IE algorithms (shown black dots and red triangles,
      respectively) for the scattering problem described in the text
      for incidence directions pointing into the circle opening (left
      panel) and ``tangential'' to the circle opening (right
      panel). As indicated in the text, all the resonances
      ``relevant'' for the corresponding time-domain problem are
      obtained by the IE algorithm with near machine-precision
      accuracy.}
    \label{fig:not_found}
\end{figure}

\subsection{Adaptive IE algorithm singularity-capturing character}\label{sec:adaptivealgdemo}
The examples presented in this section demonstrate, as indicated in Remark~\ref{rmk:phIe_is_numer}, the ability of the
IE method (Algorithm~\ref{alg:real-line-adaptive}), with a given
incident field $B_\bp$~\eqref{eq:BP}, to reliably capture all complex resonances
which are relevant to the time domain problem, up to the level of error inherent in the numerical evaluation of singularities and residues
themselves. As discussed in the introduction to
Section~\ref{sec:realleftscal}, a useful metric on the relevance of
a resonance pair $(\sigma_n,\widehat{c}_{\bp, n})$ is given by the
$L^2$ norm
\begin{equation}
  \label{eq:relevance}
  \parallel\widehat{c}_{\bp,n}(\br')\parallel_{L^2(\Gamma)} / |\omega - \sigma_n|
\end{equation}
of the contribution $\widehat{c}_{\bp,n}(\br')/(\omega - \sigma_n)$ of
the pair to the integral density and, thus,
via~\eqref{eqn:res}--\eqref{eqn:spat_res_dens_bound}, to the scattered
field $u$. To illustrate that the IE algorithm captures those resonances for which the metric~\eqref{eq:relevance} is not small Algorithm~\ref{alg:real-line-adaptive} was applied to obtain the complex resonances $P^{I,\mathrm{e}}_h$~\eqref{eqn:inc_res} in $\mathcal{M}^I_h$ with $I = [30,50]$, and $h = 0.2$ using the open circle scatterer displayed on the left most panel of Figure~\ref{fig:scatterers}. For these demonstrations two different
incident directions were used, namely, incidence normal to the opening
($\bp = (0,1)$), and incidence at a $45^\circ$ angle from the opening
($\bp = (1,0)$). The tolerance $\varepsilon_\mathrm{tol} = 10^{-10}$
was used for the AAA portion of the computations, and in each case the
integral operator $S_\omega^N$~\eqref{eq:discr_opers} was discretized
to an error level matching the tolerance.
\begin{figure}[htb]
    \centering
    \includegraphics[width=\linewidth]{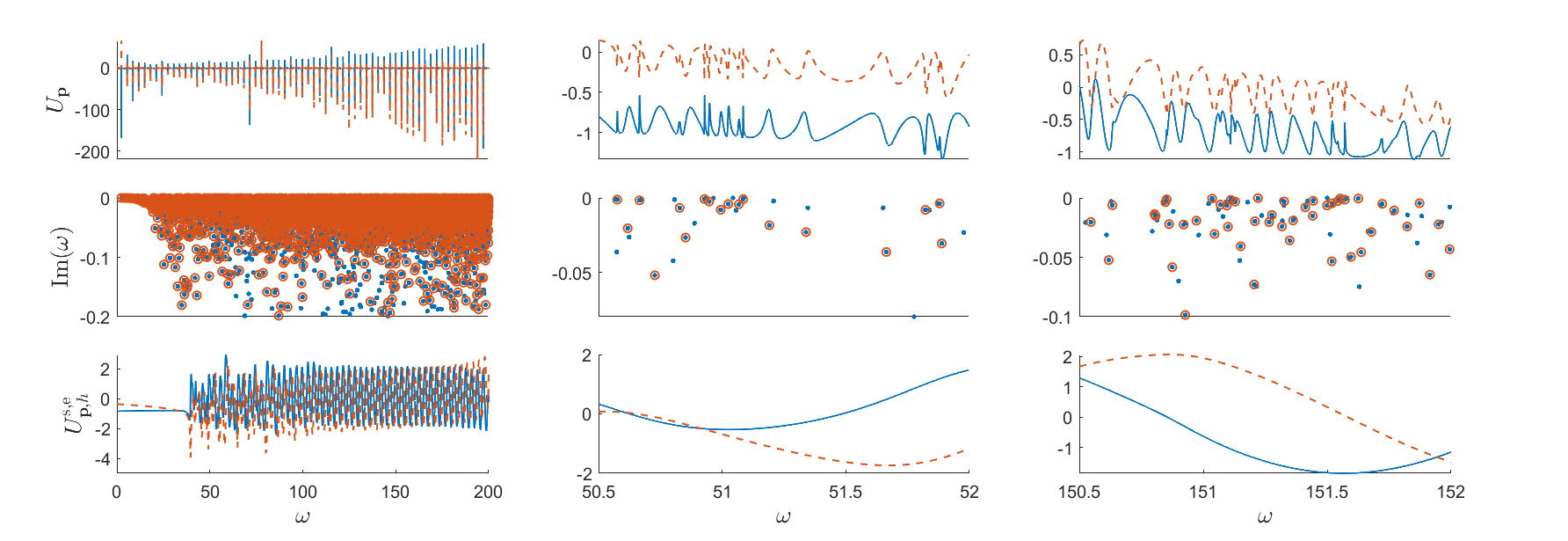}
    \caption{Top and bottom rows: Solutions $U_{\bp}(\br,\omega)$ and
      $U_{\bp,h}^{\mathrm{s,e}}(\br,\omega)$, respectively (see
      equations~\eqref{eq:Up} and~\eqref{eq:usie}), for the
      small-opening circular-arc scatterer shown in
      Figure~\ref{fig:scatterers}, over three distinct frequency
      ranges (see also Remark~\ref{rmk:using_true_pols}).  The solid
      blue and dashed orange curves represent the real and imaginary
      parts of $U_{\bp}$ (top row) and $U_{\bp,h}^{\mathrm{s,e}}$
      (bottom row).  Middle row: Resonance poles obtained from the RE
      and IE algorithms, displayed as blue dots and orange circles,
      respectively.  A total of $4833$ IE poles with real parts in
      $[0,200]$ and imaginary parts in $[-0.2,0]$ were computed and
      used to construct the regularized solution
      $U_{\bp,h}^{\mathrm{s,e}}$.}
    \label{fig:subtraction_regularizing_example}
\end{figure}

Noting that the largest value of the relevance
metric~\eqref{eq:relevance} for $\omega \in I$ is achieved at
$\omega = \Re(\sigma_n)$, for each complex resonance
$\sigma_n \in P^I_h$ computed by the RE algorithm (which, as
discussed in Section~\ref{sec:adaptiveres}, is expected to produce with high accuracy all
the resonance pairs in the box $\mathcal{M}^I_h$), the quantity
$\parallel\widehat{c}_{\bp,n}(\br')\parallel_{L^2(\Gamma)} /
|\Im(\sigma_n)|$ is plotted as a black dot in each panel of
Figure~\ref{fig:not_found} with resonances ordered according
increasing real part. The first and second panels in
Figure~\ref{fig:not_found}, which were obtained for the incidence
directions $\bp = (0,1)$ and $\bp = (1,0)$, respectively, (i.e.,
pointing into the circle opening and ``tangential'' to the circle
opening, respectively) also display a red triangle for resonances obtained by the IE algorithm. As shown in the in the figure,
Algorithm~\ref{alg:real-line-adaptive} captures all resonances whose
relevance metric is not small.


\subsection{Singularity-subtraction regularization effect}\label{sec:SingSubEx}
In order to study the regularizing effect that results from the
singularity subtraction method proposed in
Section~\ref{sec:SingSubIntrod}, in what follows we consider the
problem of scattering by the open circle displayed on the second panel
of Figure~\ref{fig:scatterers}, with boundary conditions given by a
plane wave with incident direction $\bp = (0,1)$ (normal to the
opening). The left, center and right panels in the top row of images
in Figure~\ref{fig:subtraction_regularizing_example} display the real
(solid) and imaginary (dotted) parts of
$U_{\bp}(\br,\omega)$~\eqref{eq:Up} at the point $\br = (0,0)$, for
$\omega$ in the ranges $[0,200]$, $[50.5,52],$ and $[150.5,152]$
respectively. In all cases a large number of sharp spikes in
$U_{\bp}(\br,\omega)$ can be seen. The second row displays
corresponding poles produced by the RE and IE algorithms as
blue dots and orange circles, respectively. (Per
Remark~\ref{rmk:Interval_Selection_Alg}, to achieve an efficient
computation, Algorithm~\ref{alg:real-line-adaptive} was applied to a
set of 20 intervals of length $5$ and $50$ intervals of length $2$ in
the ranges $[0,100]$ and $[100,200]$ respectively.) Comparison of the
first and second rows in
Figure~\ref{fig:subtraction_regularizing_example} reveals a clear
correspondence between the relevant complex resonances and the spikes
in the solution. Finally, the bottom row of
Figure~\ref{fig:subtraction_regularizing_example} presents the
singularity-subtracted field $U_{\bp,h}^\mathrm{s,e}(\br,\omega)$
defined in~\eqref{eq:usie}. This result clearly illustrates the
regularizing effect of the subtraction procedure and explains how the
use of the regularized field $U_{\bp,h}^\mathrm{s}$ significantly
improves the convergence of the FTH Fourier transform method employed
for the evaluation of $u^{I}(\br,t)$, as observed in
Section~\ref{sec:conv_regularized_int}.

\begin{figure}[h]
    \centering
    \includegraphics[width=\linewidth]{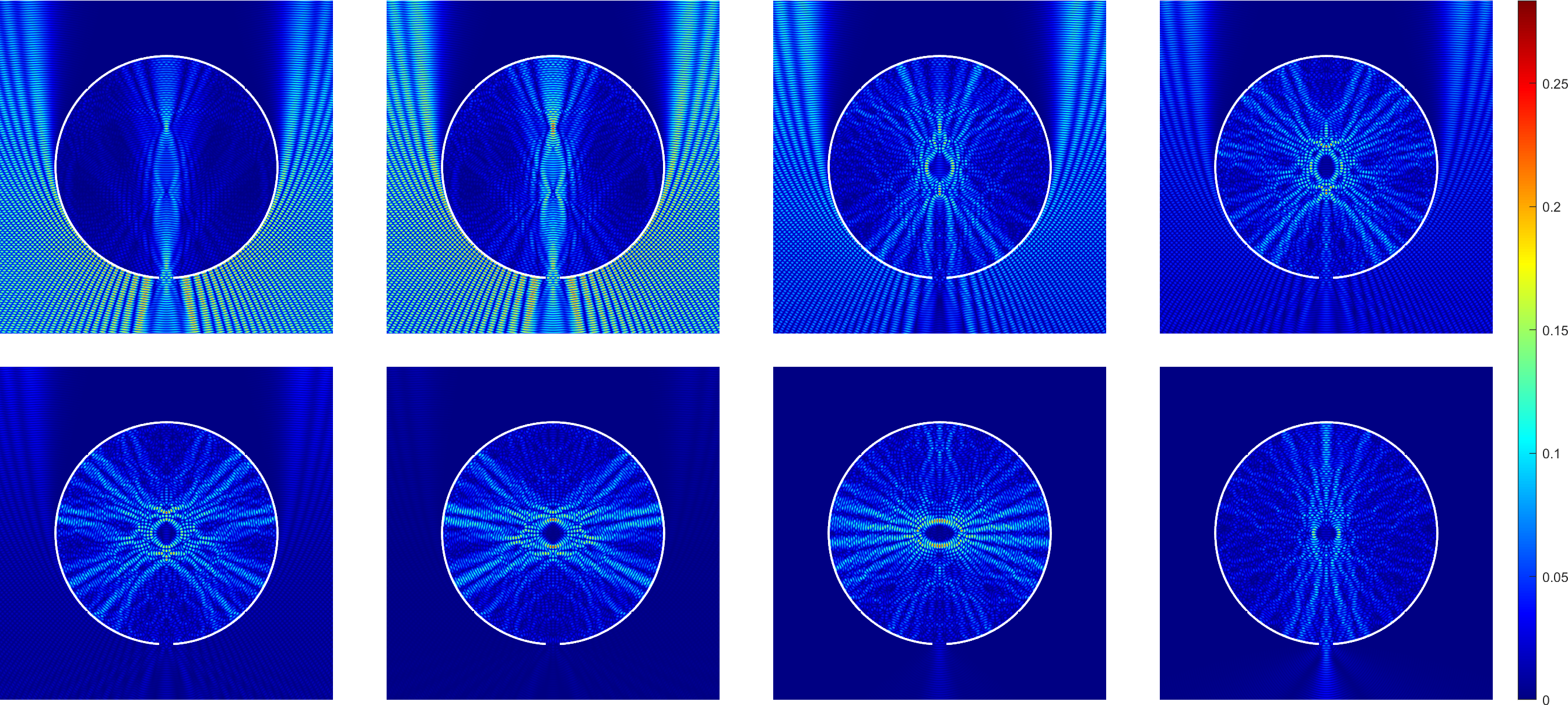}
    \caption{Temporal evolution of scattering from the open circle,
      shown at an increasing sequence of times from left to right and
      top to bottom. Each panel shows the absolute value of the real
      part of the total field. Distinct resonant structures are
      visible both during and after excitation by the incident field.}
    \label{fig:circ_so}
\end{figure}

For reference, Figure~\ref{fig:circ_so} displays time-dependent
scattered fields related to the configuration used in the examples
discussed earlier in this section. These fields were generated using
the Gaussian incident field~\eqref{eqn:gaussian_incidence} with
parameters $\omega_0 = 147.5$, $\sigma^2 = 0.1696$ and $\bp = (0,1)$,
with $I = [145,150]$, and with subtraction of the resonances produced
by the adaptive IE algorithm within the box $\mathcal{M}^I_h$ for
$h = 0.2$. The figure presents snapshots of the absolute value of the
real part of the total field, revealing several time-varying resonant
structures and illustrating the slow decay of the scattered field
after the incident field vanishes on the scattering boundary.

\subsection{Numerical verification of the pole and residue assumptions
  in Theorem~\ref{thm:sing_exp}}\label{sec:bounded_residues}
This section provides a numerical illustration of the validity of the
assumptions~\eqref{eq:pole-number} and~\eqref{eq:residue-size} in
Theorem~\ref{thm:sing_exp} concerning the number
$N^I_h = N^I_h(W_1,W_2)$ of poles~\eqref{eq:nih} and the boundedness
of the corresponding residues, respectively, for the strongly trapping
small-aperture circle shown in the second panel of
Figure~\ref{fig:scatterers} (see Remark~\ref{rmk:growth-assump}). For
this illustration, the corresponding poles $P^I_h$ were obtained to
high accuracy, for the domain $\mathcal{M}_h^I$ with $I = [0,200] $
and $ h = 0.2$, by employing the RE algorithm described in
Section~\ref{sec:adaptiveres}.

\begin{figure}[h]
    \centering
    \includegraphics[width=\linewidth]{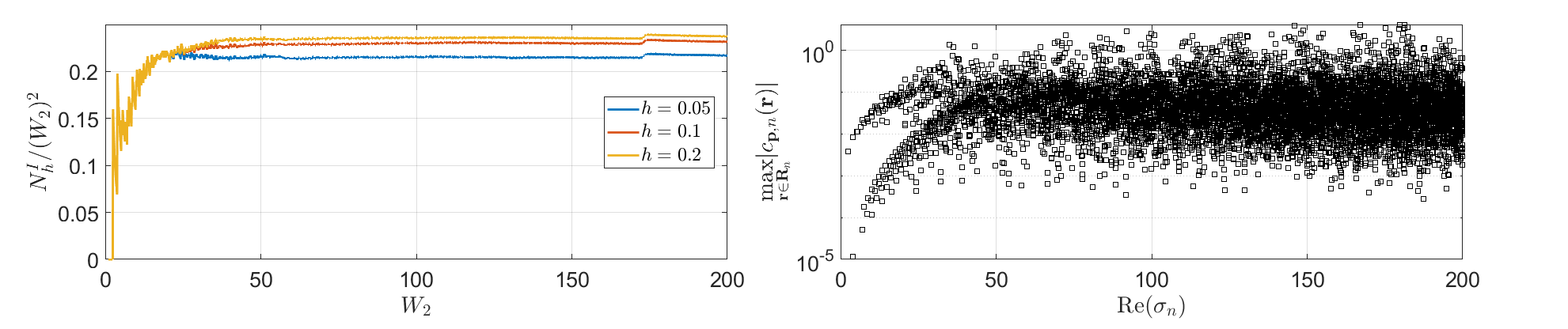}
    \caption{Numerical illustration of the assumptions underlying
      Theorem~\ref{thm:sing_exp} for the strongly trapping geometry
      depicted in the second panel of
      Figure~\ref{fig:scatterers}. Left: Scaled resonance counts
      $N^I_h / (W_2)^2$ for $I = [0, W_2]$ and three values of the
      singularity-box depth $h$, demonstrating the validity of the
      bound~\eqref{eq:pole-number} for the scattering configuration
      considered. Right: Maximum residue magnitude
      $\max_{\br \in \mathbf{R_n}} |c_{\bp,n}(\br)|$ for
      $I = [0,200]$, $h = 0.2$, and $\bp = (1,1)$, illustrating the
      boundedness of the residues in agreement
      with~\eqref{eq:residue-size}.}
    \label{fig:bnd_res}
\end{figure}

The left panel of Figure~\ref{fig:bnd_res} displays the quantity
$ N^I_h / (W_2)^2 $ for $ I = [0, W_2] $ and three values of $ h $:
$ h = 0.05 $, $ h = 0.1 $, and $ h = 0.2 $. Because complex resonances
are symmetric about the imaginary frequency axis~\cite[Corollary
7.12]{taylorbook}, these results confirm that the
bound~\eqref{eq:pole-number} holds for the highly trapping
small-aperture circle considered. Using the incident direction
$\bp = (1,1)$,  the
right panel of Figure~\ref{fig:bnd_res} displays the quantity
\begin{equation*}
  \max_{\br \in \mathbf{R}_n} |c_{\bp,n}(\br)|, \quad 1 \le n \le N^I_h \quad \text{with} \quad I = [0,200]\quad\mbox{and}  \quad h = 0.2,
\end{equation*}
where $ \mathbf{R}_n$ denotes a uniform-grid discretization of the
square domain $ [-1.5, 1.5]^2$ using six points per wavelength
$\lambda_n = 2\pi / \Re(\sigma_n)$ in each direction; clearly, the
residues $ |c_{\bp,n}(\br)| $ remain bounded as $W_2 \to +\infty$, (and by
symmetry, as $\mu \to +\infty$~\eqref{mu}), in line with the
assumption~\eqref{eq:residue-size} in Theorem~\ref{thm:sing_exp}.


\begin{figure}[h]
  \centering
  \includegraphics[width=\linewidth]{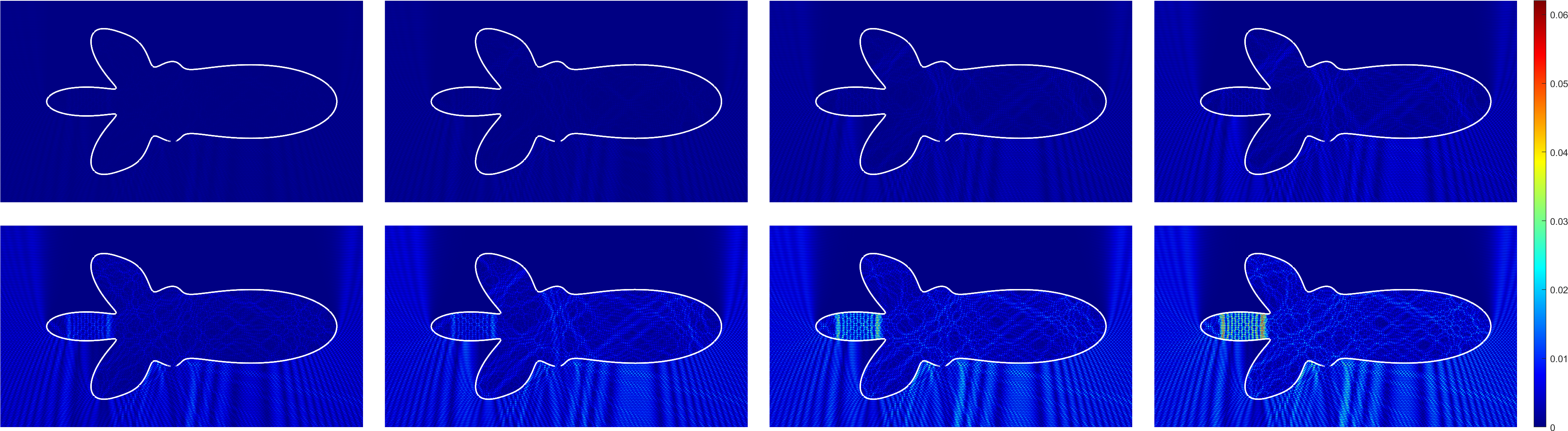}
  \caption{Same as in Figure~\ref{fig:circ_so} but for the
    rocket-shaped structure depicted in Figure~\ref{fig:scatterers}. A
    pronounced localized resonance becomes clearly visible as it
    develops in the left finger of the rocket structure.}
  \label{fig:rocket excitation}
\end{figure}

\subsection{Time-domain resonance build up}\label{sec:Time_Domain_Res_Build}
The FTH-SS algorithm’s ability to deliver accurate solutions over long
time intervals makes it well-suited for studying the buildup of
resonances in highly trapping cavities. The example in this section demonstrates the time domain
excitation of a localized resonance in the rocket-shaped scatterer
depicted in the third panel in Figure~\ref{fig:scatterers}, using the
Gaussian incident field~\eqref{eqn:gaussian_incidence} with
$\omega_0 \approx 399.969$, (a selection that corresponds to the
eigenfunction displayed in~\cite[Fig. 8]{bst}), $\sigma^2 = 0.0011$,
$I = [399.7695, 400.1695]$, and $h = 0.01$. Figure~\ref{fig:rocket excitation}
displays the the absolute value of the real part of the total field
for various times. A strong localized resonance is seen to build up in
the rocket's left finger.

\begin{figure}[h]
    \centering
    \includegraphics[width=\linewidth]{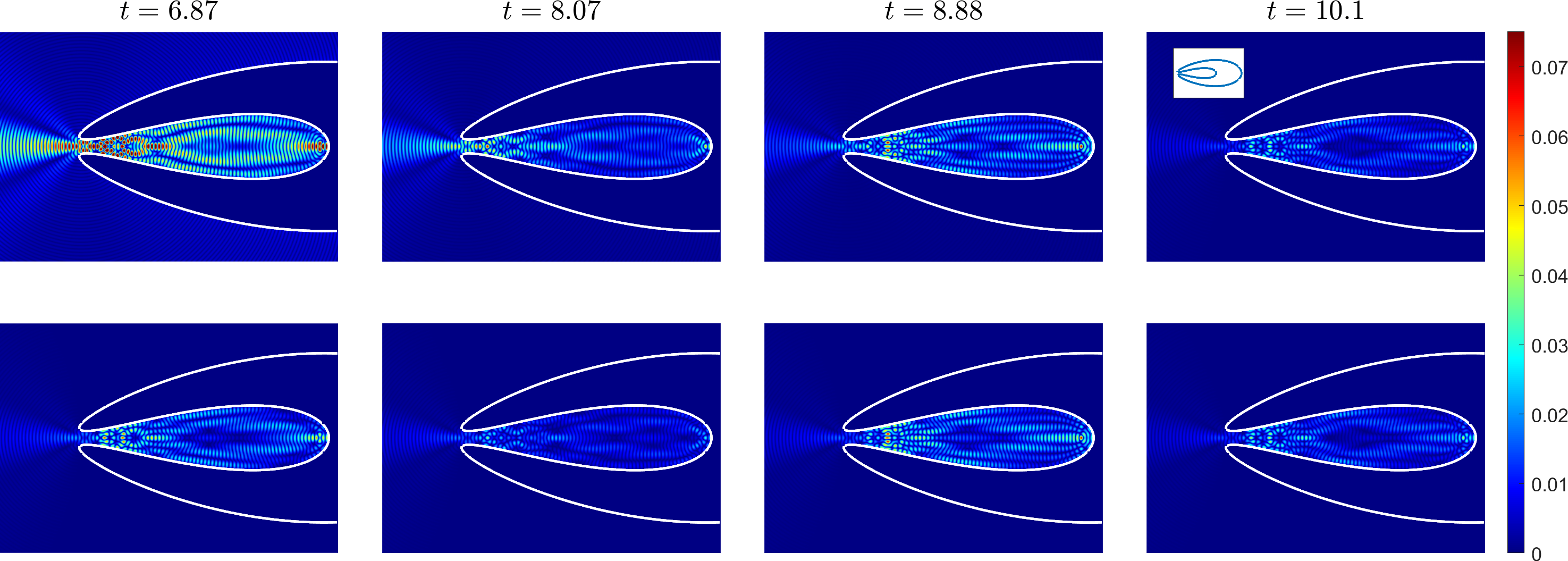}
    \caption{Top row: Temporal evolution of fields scattered from a
      highly trapping closed curve at an increasing sequence of
      times. Bottom row. Corresponding values of the singularity
      expansion $\mathcal{E}^{I,\mathrm{e}}_h$~\eqref{eqn:usem-e}. For reference the closed scatterer is displayed on the top-right panel.}
    \label{fig:closed_scatterer}
\end{figure}

\subsection{Asymptotic validity of the singularity expansion}\label{sec:Whisper}
This section presents a variety of numerical results illustrating the
discussion in Section~\ref{sec:SEM}, with a focus on the asymptotic
validity of the singularity expansion~\eqref{eqn:usem}, even for
highly trapping geometries. To this end we consider scattering
problems for each one of the scatterers depicted in
Figure~\ref{fig:scatterers} as well as a whispering-gallery structure
depicted in Figures~\ref{fig:whisper_sem_comp}
and~\ref{fig:mult_scat}.

\begin{figure}[h]
    \centering
    \includegraphics[width=\linewidth]{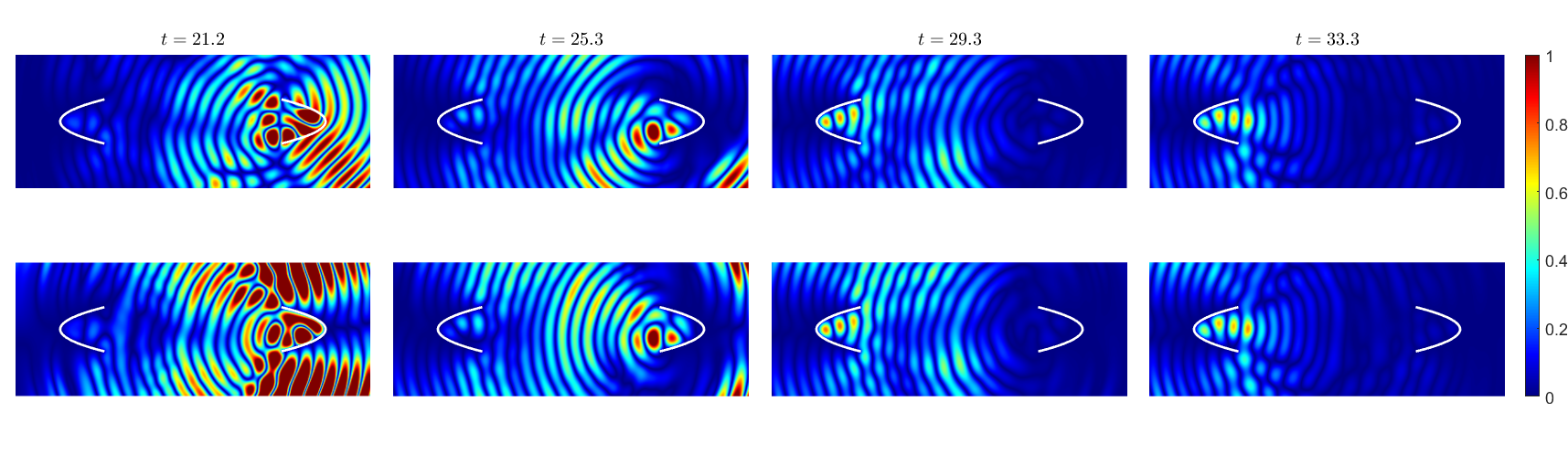}
    \caption{Same as Figure~\ref{fig:closed_scatterer} but for a
      whispering-gallery scattering structure. In agreement with the
      analysis in Section~\ref{sec:I2eval}, the singularity expansion
      suffers from significant errors before the $t = 25$
      incident-field extinction time, and the error decrease rapidly
      at later times.}
    \label{fig:whisper_sem_comp}
\end{figure}

The first example concerns scattering of the incident field defined in
equation~\eqref{eqn:gaussian_incidence}, with parameters
$\bp = (1, 0)$, $\sigma^2 = 0.2443$, and $\omega_0 = 300$, by the
closed scatterer depicted in the fourth panel of
Figure~\ref{fig:scatterers}. Both the scattered field
$u(\br,t)$~\eqref{eqn:uab} and a corresponding singularity expansion
$\mathcal{E}^{I,\mathrm{e}}_h$~\eqref{eqn:usem-e} are displayed, where
the latter is constructed by incorporating all incidence-excited
singularities $P^{I,\mathrm{e}}_h$~\eqref{eqn:inc_res} with
$I = [297, 303]$ and $h = 0.3$. The absolute values of the scattered field
is displayed at various times in the top row of
Figure~\ref{fig:closed_scatterer}, and the corresponding asymptotic
singularity-expansion approximations are presented in the bottom row
of that figure. Comparison of the top and bottom rows clearly
demonstrates the rapid convergence of the singularity expansion to the
true solution as time increases; see also Figure~\ref{fig:sem_err}.

\begin{figure}[h]
    \centering
    \includegraphics[width=\linewidth]{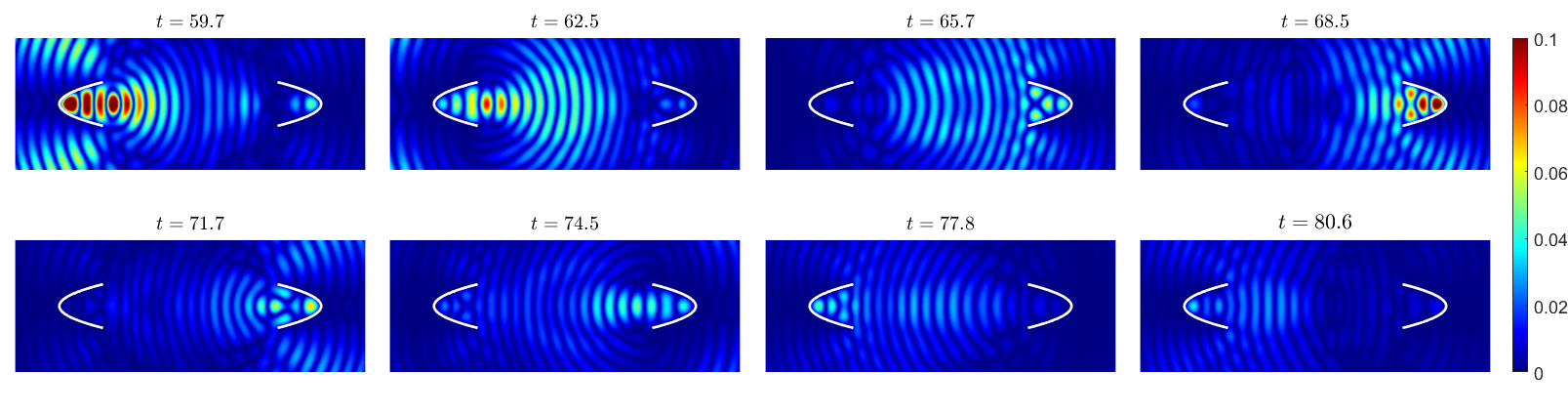}
    \caption{Late-time multiple-scattering events in the
      whispering-gallery structure. The absolute value of the
      singularity expansion approximation is displayed at several time
      points, demonstrating the expansion's ability to capture complex
      late-time scattering behavior. (Note: The color scale used here
      differs from the one employed in
      Figure~\ref{fig:whisper_sem_comp}.)  }
    \label{fig:mult_scat}
\end{figure}

A whispering-gallery example, in turn, is considered in
Figures~\ref{fig:whisper_sem_comp} and~\ref{fig:mult_scat}.  This
scattering structure, which consists of two parabolic open curves, is
illuminated by a chirp incident field~\eqref{eqn:uinc_2} with
parameters $\bp = (1,-1)$, $s = 17.5$, and $H = 7.5$. For this choice
of parameters the chirp profile~\eqref{eqn:chirp_incidence} is
supported in the time interval $10\leq t \leq 25$.  The top row of
Figure~\ref{fig:whisper_sem_comp} displays the absolute value of the
scattered field at various times, while the bottom row presents the
corresponding values of the asymptotic singularity
expansion $\mathcal{E}^I_h(\br,t)$~\eqref{eqn:usem-e} with frequency interval $I = [-40,40]$ and singularity-box depth $h = 0.3$. Comparison of these two rows of
images shows that, in agreement with the discussion in
Section~\ref{sec:I2eval}, the singularity expansion suffers from
significant errors before the time $t = T^{\mathrm{inc}} = 25$, and
that the errors decrease rapidly at later
times. Figure~\ref{fig:mult_scat} demonstrates once again the ability
of the singularity expansion to correctly capture the late multiple
scattering whispering gallery events.
 
In order to quantify the difference between the scattered field and
the asymptotic expansion~\eqref{eqn:usem} more precisely, for our final examples we consider the quantities
\begin{equation}\label{eqn:asym_err}
   \varepsilon^{I}_h(\br,t) = \Big |u(\br,t) - \mathcal{E}_h^{I}(\br,t) \Big | \quad \text{and} \quad \varepsilon^{I,\mathrm{e}}_h(\br,t) = \Big |u(\br,t) - \mathcal{E}_h^{I,\mathrm{e}}(\br,t) \Big |
\end{equation}
where exact pole and residues used in computation of $\mathcal{E}_h^{I}(\br,t)$ are approximated to high accuracy using RE method reviewed in Section~\ref{sec:adaptiveres}.

\begin{figure}[h]
    \centering
    \includegraphics[width=\linewidth]{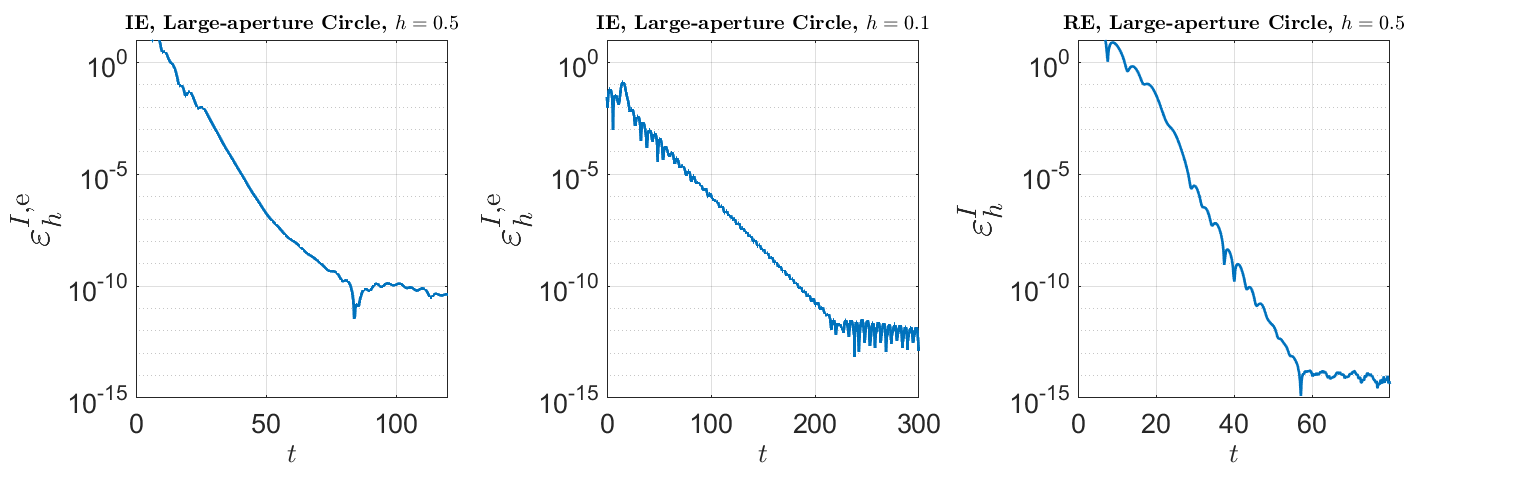}
    \caption{Errors $\varepsilon^{I}_h(\br,t)$ and
      $\varepsilon^{I,\mathrm{e}}_h(\br,t)$
      (equation~\eqref{eqn:asym_err}) at
      $\br = (0,0)$ for the large-aperture circle problem. Left and
      center panels: $\varepsilon^{I,\mathrm{e}}_h(\br,t)$ with
      $h = 0.5$ and $h = 0.1$, respectively. As discussed in the text,
      a slower error decay (resp. a reduced accuracy) is observed for
      the smaller (resp. larger) $h$ value in the context of the IE
      pole and residue evaluation method.  Right panel:
      $\varepsilon^{I}_h(\br,t)$ with $h = 0.5$---which yields near
      machine-precision accuracy by incorporating highly accurate
      poles and residues obtained by means of the RE method.}
    \label{fig:sem_h_exp}
\end{figure}

Figure~\ref{fig:sem_h_exp} displays the quantities
$\varepsilon^{I}_h(\br,t)$ and $\varepsilon^{I,\mathrm{e}}_h(\br,t)$,
as a function of $t$ and at the point $\br = (0,0)$, for the
large-aperture circle scattering problem introduced in connection with
Figure~\ref{fig:circ_large_opening}. The first and second panels
display the quantity $\varepsilon^{I,\mathrm{e}}_h(\br,t)$ with
$h = 0.5$ and $h = 0.1$, respectively. In view of Remark~\eqref{rmk:using_true_pols}, the results indicate that the asymptotic expansion~\eqref{eqn:usem} holds, however a reduction in accuracy is observed when resonances farther from the real axis are
used. This decline is attributed to the IE method’s reliance on data
at real frequencies only, which impacts upon the accuracy of the poles
and residues obtained; it should be noted, however, that, as expected,
for the smaller values of $h = 0.1$, the asymptotic expansion error
$\varepsilon^{I,\mathrm{e}}_h$ exhibits a slower decay. The third
panel in Figure~~\ref{fig:sem_h_exp} shows that, for the larger
$h = 0.5$ box depth, use of the RE method pole and residue evaluation
method results in near–machine-precision accuracy and fast
asymptotic-error decay.

Finally, Figure~\ref{fig:sem_err} shows the quantity
$\varepsilon^{I,\mathrm{e}}_h(\br,t)$ for several problems of
scattering by highly trapping obstacles considered previously in this
paper, evaluated at selected points $\br$ and plotted as functions of
time. From left to right, the panels correspond to: the small-aperture
circular cavity problem of Figure~\ref{fig:circ_so} with
$\br = (0,0)$; the rocket-shaped scatterer problem in
Figure~\ref{fig:rocket excitation} with $\br = (-0.3,0)$; the
closed-curve cavity problem of Figure~\ref{fig:closed_scatterer} with
$\br = (0,0)$; and the whispering-gallery problem of
Figure~\ref{fig:whisper_sem_comp} with $\br = (-9.7,0.1)$. The
examples in Figures~\ref{fig:sem_h_exp} and~\ref{fig:sem_err} clearly
suggest the asymptotic validity of asymptotic
expansion~\eqref{eqn:usem}, with exponentially
small asymptotic errors up to the error levels inherent in the pole
and residue evaluations.

\begin{figure}[h]
    \centering
    \includegraphics[width=\linewidth]{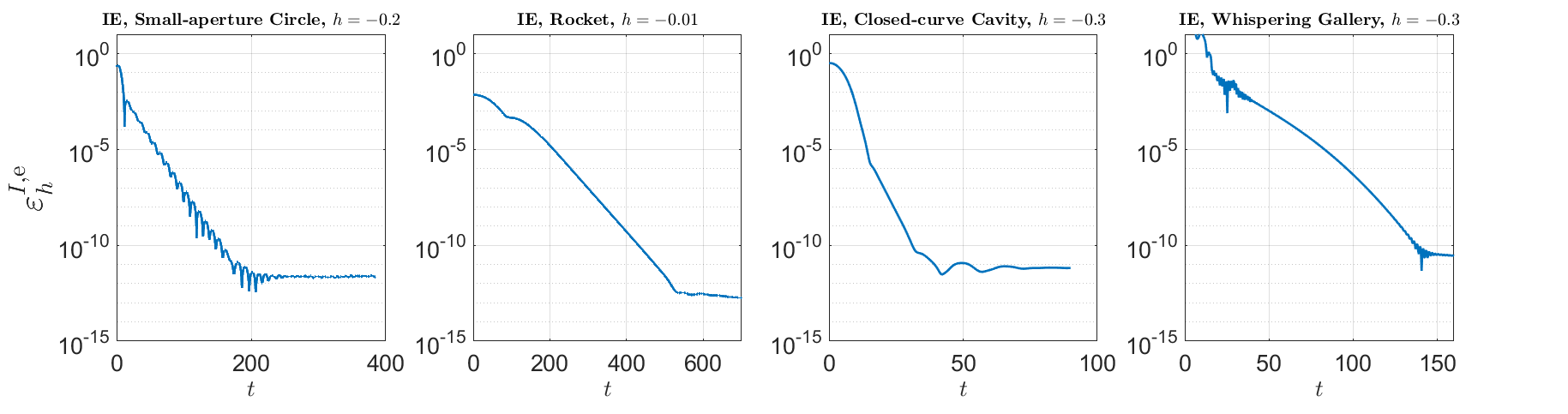}
    \caption{Errors $\varepsilon^{I,\mathrm{e}}_h(\br,t)$ at
      representative points $\br$ for various highly-trapping
      scattering configurations. Together with
      Figure~\ref{fig:sem_h_exp}, these results suggest the asymptotic
      validity of asymptotic expansion~\eqref{eqn:usem}, with exponentially small asymptotic errors up to the error levels inherent in the pole and residue
      evaluations.}
    \label{fig:sem_err}
\end{figure}

\section{Conclusions}

This paper has presented a singularity-subtraction technique which,
building on Fourier-transform-based methods~\cite{abl}, enables the
efficient computation of time-domain scattering from trapping
obstacles. At the core of the approach is a novel Incidence Excitation
(IE) algorithm that, using only real-frequency scattering solutions,
allows for the efficient evaluation of all complex resonances and
residues {\em relevant} to the subtraction procedure. The method is
completed by computing the fields associated with the subtracted
singularities through a combination of a simple and inexpensive
numerical scheme and a large-time asymptotic expansion of the
subtracted singularity terms. Numerical experiments show that a
related and well-known ``singularity expansion'' generally provides an
accurate description of the late-time behavior of the scattered
fields---even in the context of trapping structures wherein no
theoretical justification is currently available. A broad set of
examples confirms the method’s high efficiency and accuracy.

\section*{CRediT authorship contribution statement}
\textbf{Oscar Bruno}: Conceptualization, Methodology, Supervision, Formal analysis, Writing -- review, \& editing. \textbf{Manuel Santana}: Conceptualization, Methodology, Formal analysis, Software development, Writing -- review \& editing.  

\section*{Declaration of competing interest}
The authors declare that they have no known competing financial interests or personal relationships that could have appeared to influence the work reported in this paper.

\section*{Acknowledgments}
The authors gratefully acknowledge support from the Air Force Office
of Scientific Research and the National Science Foundation under
contracts FA9550-21-1-0373, FA9550-25-1-0015, DMS-2109831, and NSF
Graduate Research Fellowship No. 2139433.

\appendix
\section{Appendix: Complex resonances via the combined field
  formulation }\label{app:combined_form}

As is well known~\cite{taylorbook}, the analytic continuation of the
Dirichlet-Helmholtz solution operator $\mathcal{U}^\mathrm{c}_\omega$ to the
domain $\Im\omega < 0$ can be constructed using the inverse of the
operator $C_{\omega,0}$ (i.e., the inverse of $C_{\omega,\eta}$ with
$\eta = 0$). Unfortunately, however, the operator
$(C_{\omega,0})^{-1}$ has certain poles on the real axis that do not
correspond to poles of the solution operator
$\mathcal{U}^\mathrm{c}_\omega$.  These real poles can be avoided by
utilizing the operator $(C_{\omega,\eta})^{-1}$ with $\eta\ne 0$,
instead. But, like $(C_{\omega,0})^{-1}$, the operator
$(C_{\omega,\eta})^{-1}$ has complex poles that do not correspond to
the poles of $\mathcal{U}^\mathrm{c}_\omega$. Fortunately, as shown in
Theorem~\ref{thm:app} below, for any $\eta$ whose sign differs from
that of $\omega$, the poles of the inverse operator
$(C_{\omega,\eta})^{-1}$ in the lower half-plane $\Im\omega \leq 0$
coincide with the poles of $\mathcal{U}^\mathrm{c}_\omega$.

\begin{lemma}\label{lem:Injectivty}
  Let $\omega$ and $\eta$ satisfy $\Im(\omega) < 0$, $\Re(\omega) > 0$
  (resp. $\Re(\omega) < 0$), and $\eta < 0$ (resp. $\eta > 0$)
  . Then $\mathcal{C}_\eta : H^{1/2}(\Gamma) \to H^1_{loc}(\Omega^e)$
  is an injective operator.
\end{lemma}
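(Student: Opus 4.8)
The plan is to run the classical interior-trace argument for combined-field potentials, now adapted to complex frequency. Suppose $\psi \in H^{1/2}(\Gamma)$ lies in the kernel, i.e. $u := \mathcal{C}_{\omega,\eta}[\psi] = \mathcal{K}_\omega[\psi] - \mathrm{i}\eta\,\mathcal{S}_\omega[\psi]$ (cf.~\eqref{eqn:combined_rep}) vanishes identically in $\Omega^\mathrm{e}$. Viewing $u$ as a function on $\R^2\setminus\Gamma$, write $u_\pm$ and $\partial_n u_\pm$ for its Dirichlet and Neumann traces on $\Gamma$ taken from $\Omega^\mathrm{e}$ ($+$) and from the bounded interior domain $\Omega^\mathrm{i}$ ($-$), with $n$ the normal appearing in the definition of $\mathcal{K}_\omega$. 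The first step is to record the standard jump relations~\cite{COLTON:2019}, which remain valid verbatim for complex $\omega\ne 0$ since the leading logarithmic singularity of $G_\omega$ is $\omega$-independent: the single-layer potential is continuous while its normal derivative jumps by $-\psi$, and the double-layer trace jumps by $\psi$ while its normal derivative is continuous; hence $u_+ - u_- = \psi$ in $H^{1/2}(\Gamma)$ and $\partial_n u_+ - \partial_n u_- = \mathrm{i}\eta\,\psi$ in $H^{-1/2}(\Gamma)$.

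Since $u \equiv 0$ in $\Omega^\mathrm{e}$ we have $u_+ = 0$ and $\partial_n u_+ = 0$, so the jump relations give $u_- = -\psi$ and $\partial_n u_- = -\mathrm{i}\eta\,\psi = \mathrm{i}\eta\,u_-$ on $\Gamma$. In other words, $v := u|_{\Omega^\mathrm{i}}$ solves the interior impedance problem $\Delta v + \kappa^2 v = 0$ in $\Omega^\mathrm{i}$, $\partial_n v = \mathrm{i}\eta\,v$ on $\Gamma$, with $\kappa = \omega/c$. It therefore suffices to prove $v \equiv 0$ in $\Omega^\mathrm{i}$, since this forces $\psi = -v|_\Gamma = 0$, which is exactly injectivity.

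To show $v\equiv 0$ I would invoke Green's first identity on $\Omega^\mathrm{i}$ (with the boundary term read as the $H^{-1/2}$–$H^{1/2}$ duality pairing, which here coincides with an $L^2(\Gamma)$ integral),
\begin{equation*}
\int_{\Omega^\mathrm{i}}\big(|\nabla v|^2 - \kappa^2|v|^2\big)\,dV \;=\; \int_\Gamma \overline{v}\,\partial_n v\,ds \;=\; \mathrm{i}\eta\,\|v\|_{L^2(\Gamma)}^2 .
\end{equation*}
Taking imaginary parts, using that $\int_{\Omega^\mathrm{i}}|\nabla v|^2\,dV$ and $\int_{\Omega^\mathrm{i}}|v|^2\,dV$ are real together with $\Im(\kappa^2) = 2\,\Re(\omega)\,\Im(\omega)/c^2$, yields
\begin{equation*}
-\,\frac{2\,\Re(\omega)\,\Im(\omega)}{c^2}\,\|v\|_{L^2(\Omega^\mathrm{i})}^2 \;=\; \eta\,\|v\|_{L^2(\Gamma)}^2 .
\end{equation*}
In the first sign regime ($\Im(\omega) < 0$, $\Re(\omega) > 0$, $\eta < 0$) the coefficient $-2\Re(\omega)\Im(\omega)/c^2$ is strictly positive, so the left-hand side is $\ge 0$ while the right-hand side is $\le 0$; both must therefore vanish, and strict positivity of the coefficient gives $\|v\|_{L^2(\Omega^\mathrm{i})} = 0$, i.e. $v\equiv 0$. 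The second regime ($\Im(\omega)<0$, $\Re(\omega)<0$, $\eta>0$) is identical up to the obvious reversal of signs. In either case $\psi = -v|_\Gamma = 0$, completing the proof.

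I expect the main obstacle to be the sign bookkeeping rather than any deep difficulty: the conclusion hinges on the precise relative sign of the two jump relations (which fixes the impedance condition as $\partial_n v = +\mathrm{i}\eta v$ rather than $-\mathrm{i}\eta v$) and on the sign of $\Im(\kappa^2)$ as a function of $\Re(\omega)$ and $\Im(\omega)$; a slip in either place would pair the wrong sign of $\eta$ with the wrong quadrant of $\omega$. A secondary point to state carefully is the functional setting — potentials in $H^1_{loc}$, traces in $H^{\pm 1/2}(\Gamma)$, and the jump relations and Green's identity understood in the corresponding weak sense (on the bounded domain $\Omega^\mathrm{i}$ one has $v\in H^1(\Omega^\mathrm{i})$) — and one should note that the strict hypotheses $\Re(\omega)\ne 0$ and $\eta\ne 0$ are exactly what make the last displayed identity sign-definite: for $\Re(\omega)=0$ it would only yield $v|_\Gamma = 0$, which alone does not force $v\equiv 0$ without an additional unique-continuation step.
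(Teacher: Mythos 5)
Your argument is correct and is essentially the paper's own proof: the identity you derive via the jump relations and the imaginary part of Green's first identity in $\Omega^{\mathrm{i}}$ is exactly~\eqref{eqn:imaggreenident} (which the paper obtains by citing the Colton--Kress combined-field injectivity argument together with the Sobolev-setting jump relations), followed by the same sign analysis in the two regimes. The only cosmetic difference is that the paper concludes $\psi=0$ directly from the vanishing of the boundary term $\eta\|\psi\|_{L^2(\Gamma)}^2$ (note $v|_\Gamma=-\psi$, so your closing caveat about needing $v\equiv 0$ and unique continuation is moot), whereas you pass through $v\equiv 0$ in $\Omega^{\mathrm{i}}$.
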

\begin{proof}
  The proof relies on the fact that, for a given $\psi \in H^{1/2}(\Gamma)$
  and defining the function
  $U(\br,\omega)= \mathcal{C}_\eta[\psi](\br,\omega)$ for
  $\mathbf{r}\not\in\Gamma$, then if $U$ vanishes identically in
  $\Omega^\mathrm{ext}$ then $U$ and $\psi$ satisfy the relation
      \begin{equation}\label{eqn:imaggreenident}
        -\frac{2}{c^2}\Re(\omega)\Im(\omega)\int_{\Omega^\mathrm{i}} |U|^2dx = \eta \int_{\partial \Omega}|\psi|^2ds,
    \end{equation}
    where
    $\Omega^{\mathrm{i}} \coloneqq \R^2 \setminus \Omega \cup
    \Gamma$. This can be established as in~\cite[Theorem
    3.33]{COLTON:1983} by noting that the necessary jump relations are
    valid~\cite[Theorem 6.11]{mclean2000strongly} in the functional
    setting considered here.  In the case $\Re(\omega) > 0$ the
    left-hand term in~\eqref{eqn:imaggreenident} is non-negative,
    which, since $\eta < 0$, implies that $\psi$ vanishes identically,
    and the injectivity of $\mathcal{C}_\eta$ follows in this
    case. The case $\Re(\omega) < 0$, $\eta > 0$ follows similarly.
\end{proof}

\begin{theorem}\label{thm:app}
  Let $\omega \in \mathbb{C}$ such that $\Re(\omega) > 0$
  (resp. $\Re(\omega) < 0$), and let $\eta < 0$ (resp. $\eta >
  0$). Then, for $\Im(\omega) \leq 0$, the set of poles of
  $(C_{\omega,\eta})^{-1}$ coincides with the set of poles of
  $\mathcal{U}_{\omega}^\mathrm{c}$.
\end{theorem}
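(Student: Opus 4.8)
The plan is to combine the analytic Fredholm theory for the operator family $\omega\mapsto C_{\omega,\eta}$ (eq.~\eqref{eqn:combined_op}) with the injectivity statement of Lemma~\ref{lem:Injectivty}, which is the step where the sign condition on $\eta$ enters in an essential way. Since the case $\Re(\omega)<0$, $\eta>0$ is identical after flipping signs, I treat only $\Re(\omega)>0$, $\eta<0$. Set $D=\{\omega:\Re(\omega)>0\}$; on $D$ the Hankel kernel stays clear of its branch cut and of the $\omega=0$ logarithmic branch point, so $\omega\mapsto C_{\omega,\eta}$ is an analytic family of Fredholm operators of index zero on $H^{1/2}(\Gamma)$ (the identity plus a compact perturbation). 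For $\omega$ on the positive real axis $C_{\omega,\eta}$ is invertible for every $\eta\neq0$: the argument proving Lemma~\ref{lem:Injectivty} specializes to real $\omega$, where $\Im(\omega)=0$ makes the left-hand side of~\eqref{eqn:imaggreenident} vanish and forces $\psi\equiv0$ for any $\eta\ne0$, and together with the index-zero property this yields invertibility. Hence, by the analytic Fredholm theorem, the set $\mathcal P\subset D$ on which $C_{\omega,\eta}$ fails to be invertible is discrete, $(C_{\omega,\eta})^{-1}$ extends meromorphically to $D$, and its poles are exactly the points of $\mathcal P$; it remains to identify $\mathcal P\cap\{\Im(\omega)\le0\}$ with the set of poles of $\mathcal U^{\mathrm c}_\omega$ in that region.

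For the inclusion ``poles of $\mathcal U^{\mathrm c}_\omega\subseteq\mathcal P$'', observe that by the combined-field representation~\eqref{eqn:combined_rep},~\eqref{eqn:sol_op_dl}--\eqref{eqn:sol_op_comb} one has $\mathcal U^{\mathrm c}_\omega[B]=\mathcal C_{\omega,\eta}\big[(C_{\omega,\eta})^{-1}B\big]$ for real $\omega>0$. Since $\mathcal C_{\omega,\eta}=\mathcal K_\omega-\mathrm i\eta\mathcal S_\omega$ is analytic in $\omega\in D$ and $(C_{\omega,\eta})^{-1}$ is meromorphic on $D$ with poles in $\mathcal P$, the right-hand side is meromorphic on $D$ with poles in $\mathcal P$; and since it agrees on the interval $(0,\infty)$ with the meromorphic continuation of $\mathcal U^{\mathrm c}_\omega$ (which exists by~\cite{taylorbook}), the two coincide as meromorphic operator-valued functions on all of $D$. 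In particular every pole of $\mathcal U^{\mathrm c}_\omega$ in $D$ belongs to $\mathcal P$.

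The reverse inclusion is the crux. Fix $\omega_0\in\mathcal P$ with $\Re(\omega_0)>0$ and $\Im(\omega_0)\le0$; invertibility of $C_{\omega,\eta}$ on $(0,\infty)$ forces $\Im(\omega_0)<0$. By Remark~\ref{rmk:simple_reso} the pole is simple, so near $\omega_0$ we may write $(C_{\omega,\eta})^{-1}=(\omega-\omega_0)^{-1}A_{-1}+(\text{holomorphic})$ with $A_{-1}$ the rank-one operator whose range equals $\ker C_{\omega_0,\eta}=\mathrm{span}\{\psi_0\}$ for some $\psi_0\ne0$. Using the identity of the previous paragraph, the residue of $\mathcal U^{\mathrm c}_\omega$ at $\omega_0$ is $\mathcal C_{\omega_0,\eta}A_{-1}$, which vanishes iff $\mathcal C_{\omega_0,\eta}[\psi_0]\equiv0$ in $\Omega^{\mathrm e}$. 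But $\Re(\omega_0)>0$, $\Im(\omega_0)<0$ and $\eta<0$ are precisely the hypotheses of Lemma~\ref{lem:Injectivty}, so $\mathcal C_{\omega_0,\eta}$ is injective, and $\mathcal C_{\omega_0,\eta}[\psi_0]\equiv0$ would force $\psi_0=0$ --- a contradiction. Hence $\mathcal U^{\mathrm c}_\omega$ has a (simple) pole at $\omega_0$, and the two sets coincide. (Equivalently, $U_0:=\mathcal C_{\omega_0,\eta}[\psi_0]$ is a nontrivial outgoing homogeneous Helmholtz solution at $\omega_0$ with zero Dirichlet trace, i.e.\ a resonance state, so $\omega_0$ is a resonance by the standard characterization in~\cite{taylorbook}.)

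I expect the reverse inclusion to be the main obstacle, specifically the need to exclude ``spurious'' non-invertibility of $C_{\omega,\eta}$: for a wrong-sign $\eta$ --- or at a general complex frequency --- the potential $\mathcal C_{\omega,\eta}$ can itself fail to be injective, and at such a frequency $\mathcal C_{\omega_0,\eta}A_{-1}=0$, so $(C_{\omega,\eta})^{-1}$ has a pole there that is not a resonance --- exactly the phenomenon noted for the Neumann operator in Section~\ref{sec:complex_resonances}. The content of the theorem is that the sign condition on $\eta$ rules this out, and Lemma~\ref{lem:Injectivty} is the tool; the rest is bookkeeping with the analytic Fredholm theorem, uniqueness of analytic continuation, and Laurent expansions at a simple pole. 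A minor technical point to check carefully is that the analyticity of $C_{\omega,\eta}$ and the combined-field identity genuinely hold on the connected set $D=\{\Re(\omega)>0\}$ (staying clear of the $\omega=0$ logarithmic singularity), so that the uniqueness-of-continuation argument applies as stated.
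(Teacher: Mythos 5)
Your proposal follows essentially the same route as the paper's proof: meromorphy of $(C_{\omega,\eta})^{-1}$ via analytic Fredholm theory, the inclusion of the poles of $\mathcal{U}^{\mathrm{c}}_\omega$ among those of $(C_{\omega,\eta})^{-1}$ via the representation \eqref{eqn:sol_op_comb}, and the reverse inclusion by applying the injectivity of the combined-field potential (Lemma~\ref{lem:Injectivty}) to the leading Laurent coefficient. Your additional bookkeeping (index-zero Fredholm structure on $\{\Re\omega>0\}$, invertibility on the positive real axis via \eqref{eqn:imaggreenident} together with exterior uniqueness, and identification of the two meromorphic continuations) is correct and merely makes explicit what the paper leaves implicit.

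The one step that does not close as written is the reverse inclusion. You invoke Remark~\ref{rmk:simple_reso} to assume that the pole of $(C_{\omega,\eta})^{-1}$ at $\omega_0$ is simple, but that remark concerns poles of the solution operator $\mathcal{U}_\omega$ (i.e.\ resonances), and at this stage $\omega_0$ is not yet known to be a resonance---that is exactly what you are trying to prove---so the simplicity assumption is circular; in addition, the assertion that the residue $A_{-1}$ is rank one with range \emph{equal} to $\ker C_{\omega_0,\eta}$ is more than the Laurent analysis gives (one only obtains that the range is contained in the kernel) and is not needed. Both issues vanish if, as in the paper, you allow a pole of arbitrary order $m$: choose $B$ with $(C_{\omega,\eta})^{-1}[B]=(\omega-\omega_0)^{-m}\bigl(B_m+B_{m+1}(\omega-\omega_0)+\cdots\bigr)$, $B_m\neq 0$; then the most singular coefficient of $\mathcal{U}^{\mathrm{c}}_\omega[B]$ is $\mathcal{C}_{\omega_0,\eta}[B_m]$, which is nonzero by Lemma~\ref{lem:Injectivty}, so $\omega_0$ is a pole of $\mathcal{U}^{\mathrm{c}}_\omega$ of the same order. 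With that repair your argument coincides with the paper's.
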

\begin{proof}
  Since the double- and single-layer
  operators~\eqref{eqn:sl_dl_operator} are compact, the operator
  $(C_{\omega,\eta})^{-1}$ is a meromorphic function of $\omega$ in
  the entire complex plane~\cite[Proposition 7.4]{taylorbook}, except
  for a logarithmic branch cut joining $\omega = 0$ and
  $\omega =\infty$. In view of the
  representation~\eqref{eqn:sol_op_comb} of the solution operator
  $\mathcal{U}^\mathrm{c}_\omega$ it follows that the set of poles of
  $\mathcal{U}_\mathrm{c}(\omega)$ is contained within the set of
  poles of $(C_{\omega,\eta})^{-1}$.

  To show that the converse is also true assume
  $(C_{\omega,\eta})\inv$ has a pole of order $m$ at
  $\omega = \omega_0$ with $\Im(\omega_0) < 0$.  Then there exists an
  element $B \in H^{1/2}(\Gamma)$ such that
    \begin{equation*}
      (C_{\omega,\eta})\inv [B] = (\omega - \omega_0)^{-m}\left(B_m + B_{m+1}(\omega - \omega_0) + \cdots\right)
      \end{equation*}
      for a certain sequence $B_j\in H^{1/2}(\Gamma)$, $j\geq m$, with
      $B_m\ne 0.$ Letting $u_m = \mathcal{C}_\eta[B_m]$ it follows
      that
     \begin{equation*}
        \mathcal{U}^\mathrm{c}_\omega [B] = (\omega - \omega_0)^{-m}u_m + O((\omega - \omega_0)^{-m +1}) \quad \mbox{as}\quad \omega \to \omega_0.
     \end{equation*}
     By Lemma~\ref{lem:Injectivty} $u_m \ne 0$, and, therefore
     $\omega_0$ is a pole of $\mathcal{U}^\mathrm{c}_\omega$. The
     proof is now complete.
\end{proof}

\bibliographystyle{plain}
\bibliography{main}

\end{document}